\theoremstyle{plain}
\newtheorem{theorem}{Theorem}[section]	
\newtheorem{corollary}[theorem]{Corollary}
\newtheorem{proposition}[theorem]{Proposition}
\theoremstyle{definition}
\newtheorem{definition}[theorem]{Definition}
\newtheorem{example}[theorem]{Example}
\theoremstyle{remark}
\newtheorem{remark}{Remark}
\DeclareMathOperator{\co}{co}
\DeclareMathOperator{\cl}{cl}
\DeclareMathOperator{\interior}{int}
\DeclareMathOperator{\dom}{dom}
\DeclareMathOperator{\core}{core}
\DeclareMathOperator{\olim}{o-lim}
\DeclareMathOperator{\supp}{supp}
\DeclareMathOperator{\Ker}{Ker}
\author{Dolgopolik M.V.\footnote{Saint Petersburg State University, Saint Petersburg, Russia}
\footnote{Institute of Problems of Mechanical Engineering, Saint Petersburg, Russia}}
\title{Abstract convex approximations of nonsmooth functions}
\begin{document}

\maketitle

\begin{abstract}
In this article we utilise abstract convexity theory in order to unify and generalize many different concepts from
nonsmooth analysis. We introduce the concepts of abstract codifferentiability, abstract quasidifferentiability and
abstract convex (concave) approximations of a nonsmooth function mapping a topological vector space to an order complete
topological vector lattice. We study basic properties of these notions, construct elaborate calculus of abstract
codifferentiable functions and discuss continuity of abstract codifferential. We demonstrate that many classical
concepts of nonsmooth analysis, such as subdifferentiability and quasidifferentiability, are particular cases of the
concepts of abstract codifferentiability and abstract quasidifferentiability. We also show that abstract convex and
abstract concave approximations are a very convenient tool for the study of nonsmooth extremum problems. We use these
approximations in order to obtain various necessary optimality conditions for nonsmooth nonconvex optimization problems
with the abstract codifferentiable or abstract quasidifferentiable objective function and constraints. Then we
demonstrate how these conditions can be transformed into simpler and more constructive conditions in some particular
cases.
\end{abstract}

\section{Introduction}

One of the first ideas in the study of the local behaviour of a function was to approximate the function under
consideration in a neighbourhood of a point by a very simple function, namely linear function, and to use this linear
function in order to study some properties of the initial function. This simple idea gave rise to the concept of
derivative, and eventually led to the development of classical differential calculus. In the twentieth century, various
generalizations of derivative were proposed in nonsmooth analysis. Most of these generalizations are just
modifications of the directional derivative or the subgradient and the subdifferential of a convex function 
(see, e.g., \cite{Clarke,RocWets,Schir,Penot,Mord}). Although these generalizations are effective tools for
solving various nonsmooth problems, they are discontinuous in the nonsmooth case. A lack of continuity and exact
calculus often makes the design of effective numerical methods very difficult.

However, there is a different way to generalize the definition of derivative. In order to study a more broad class
of functions than the class of differentiable functions, one should simply approximate a function in a neighbourhood of
a point by more broad (and inevitably more complicated) set of functions than the set of linear functions. From the
point of view of optimization, a natural candidate on the role of the set of approximating functions is the set of
convex (concave or the sum of convex and concave) functions, since the class of convex functions is the simplest and the
most profoundly studied class of functions in optimization. For a long time this simple idea had not been fulfilled in
nonsmooth analysis, until in 1988 V.F.~Demyanov introduced the concept of codifferentiable function \cite{Dem}
(that implicitly carried out this idea) in order to construct a continuous approximation of a nonsmooth function. Since
usually a continuous approximation of a nonsmooth function must be nonhomogeneous, (see the introduction in
\cite{DemRub}), we naturally come to the following definition of codifferentiable function, which is a
generalization of the concept of quasidifferentiable functon \cite{DemRub}. Let $\Omega \subset \mathbb{R}^d$ be an
open set. A function $f \colon \Omega \to \mathbb{R}$ is called codifferentiable at a point $x \in \Omega$ if there
exist convex compact sets $\underline{d} f(x)$, $\overline{d} f(x) \subset \mathbb{R}^{d+1}$ such that for any
admissible $\Delta x \in \mathbb{R}^d$ (i.~e. such that $\co\{x, x + \Delta x \} \subset \Omega$) one has
\[
f(x + \Delta x) - f(x) = \max_{(a, v) \in \underline{d} f(x)}(a + \langle v, \Delta x \rangle) +
\min_{(b, w) \in \underline{d} f(x)}(b + \langle w, \Delta x \rangle) + o(\Delta x, x),
\]
where $o(\alpha \Delta x, x) / \alpha \to 0$ as $\alpha \to +0$, and $\langle \cdot, \cdot \rangle$ is the inner
product in $\mathbb{R}^d$. In actuality, the previous definition is equivalent to the following one
(cf.~example~\ref{ExampleCodiff} below): a function $f \colon \Omega \to \mathbb{R}$ is said to be codifferentiable at a
point $x \in \Omega$ if there exist a finite convex function $\Phi \colon \mathbb{R}^d \to \mathbb{R}$ and a finite
concave function $\Psi \colon \mathbb{R}^d \to \mathbb{R}$ such that for any admissible $\Delta x \in \mathbb{R}^d$
\[
f(x + \Delta x) - f(x) = \Phi(\Delta x) + \Psi(\Delta x) + o(\Delta x, x),
\]
where $o(\alpha \Delta x, x) / \alpha \to 0$ as $\alpha \to +0$. The concept of codifferentiability appeared to be
an effective tool for solving various nonsmooth optimization problems \cite{DemRub, DemBagRub, BagUgon, BagGanUgonTor}.
Let us mention here an interesting ability of the method of codifferential descent \cite{DemRub} to ``jump over'' some
points of local minimum \cite{DemBagRub}.

The aim of this article is to take the next step and to utilize some ideas of abstract convexity in nonsmooth analysis.
Namely, we introduce and study the concepts of abstract codifferentiability, abstract quasidifferentiability, and
abstract convex and abstract concave approximations of a nonsmooth function mapping a topological vector space to an
order complete topological vector lattice. These concepts are based on the idea of an approximation of a function in a
neighbourhood of a point by an abstract convex function (or an abstract concave function, or the sum of abstract convex
and abstract concave functions). Actually, many well-known notions of nonsmooth analysis, such as
subdifferentiability, quasidifferentiability, codifferentiability, exhauster, and coexhauster, are just a particular
cases of the concepts of abstract quasidifferentiability and abstract codifferentiability. Thus, the theory presented
in the article gives us a new understanding of these notions, and allows one to present many different concepts and
results of nonsmooth analysis in a unified and convenient framework. Moreover, the theory of abstract
codifferentiability furnishes one with a useful approach to the construction and study of continuous approximations
of nonsmooth functions. Therefore, we pay a lot of attention to the problem of continuity of an abstract codifferential
and thoroughly develop the calculus of abstract codifferentiable functions.

In the article, we also derive necessary optimality conditions for various nonsmooth nonconvex optimization problems
with the use of the abstract convex and abstract concave approximations of a nonsmooth function. The author thinks that
the abstract convex and abstract concave approximations are a very effective tool for the study of various nonsmooth
constrained extremum problems, since they allow one to obtain necessary optimality conditions for these problems in a
very simple manner. As applications of the general theory, we give new characterizations of some classes of nonsmooth
functions, and obtain new necessary optimality conditions for these classes of functions.

\section{Preliminaries}

In this section we recall some basic notions from abstract convexity, and introduce several specific sets and operations
on these sets, which will simplify the exposition of the main results in the article. We assume that the reader is
familiar with some basic definitions and facts from the theory of topological vector lattices 
\cite{Schaefer, MeyerNieberg} and abstract convex analysis \cite{Rubinov, Singer, Pallashke}.

\subsection{Abstract convexity}

We recall some definitions from abstract convexity, that is used in subsequent. Let $X$ be an arbitrary nonempty set,
$E$ be a complete lattice, $f \colon X \to E$ be an arbitrary function, and let $H$ be a nonvoid set of mappings 
$h \colon X \to E$. If $h \in H$ and $h(x) \le f(x)$ for all $x \in X$, then we write $h \le f$ (or $f \ge h$).

\begin{definition}
The function $f$ is called abstract convex with respect to $H$ (or $H$-convex) if there exists a nonempty set 
$U \subset H$ such that $f(x) = \sup_{h \in U} h(x)$ for all $x \in X$.
In this case one says that the abstract convex function $f$ is generated by $U$.

The function $f$ is called abstract concave with respect to $H$ (or $H$-concave) if there exists a nonempty set 
$V \subset H$ such that $f(x) = \inf_{h \in V} h(x)$ for any $x \in X$. In the latter case the abstract concave
function $f$ is said to be generated by $V$.
\end{definition}

The set $\supp^+(f, H) = \{ h \in H \mid f \le h \}$ is called an upper support set of $f$ with respect to $H$, and 
the set $\supp^-(f, H) = \{ h \in H \mid f \ge h \}$ is referred to as a lower support set of $f$ with respect to $H$.
The set $\underline{\partial}_H f(x) = \{ h \in \supp^-(f, H) \mid h(x) = f(x)\}$ is called an
$H$-subdifferential of $f$ at $x$, and the set $\overline{\partial}_H f(x) = \{ h \in \supp^+(f, H) \mid h(x) = f(x)\}$
is referred to as an $H$-superdifferential of $f$ at $x$.

Note an obvious condition for the global minimum (maximum) of the function $f$ via abstract convex structures. Suppose
that the set $H$ contains all constant functions. Then it is easy to check that for the function $f$
to have a global minimum (maximum) value at a point $x^*$ it is necessary and sufficient that 
\begin{equation} \label{glExtrCondAbstSubDiff}
f(x^*) \in \underline{\partial}_H f(x^*) \quad (f(x^*) \in \overline{\partial}_H f(x^*)).
\end{equation}
One can suppose that only the constant function $h \equiv f(x^*)$ belongs to $H$ in order to get
(\ref{glExtrCondAbstSubDiff}).

\subsection{Special sets}

Let $X$ be an arbitrary nonvoid set and $E$ be an order complete vector lattice. We add the improper elements $+\infty$
and $-\infty$ to the vector lattice $E$, where, as usual, $+\infty$ is considered as a greatest element, and $-\infty$
is considered as a least element. Denote $\overline{E} = E \cup \{ - \infty \} \cup \{ +\infty \}$. It is clear that
$\overline{E}$ endowed with an obvious order relation is a complete lattice. Set
\begin{gather*}
x + (+\infty) = (+\infty) + x = +\infty, \quad x + (-\infty) = (-\infty) + x = -\infty, \\
\alpha (+\infty) = + \infty, \quad \alpha (-\infty) = -\infty \quad \mbox{if } \alpha > 0, \\
\alpha (+\infty) = - \infty, \quad \alpha (-\infty) = +\infty \quad \mbox{if } \alpha < 0.
\end{gather*}
We will not consider such expressions as $+\infty + (-\infty)$ or $0 (+\infty)$. For an arbitrary function 
$F \colon X \to \overline{E}$ denote  $\dom F = \{ x \in X \mid F(x) \ne - \infty, F(x) \ne + \infty \}$. 
The sum $l = p + q$ of functions $p, q \colon X \to \overline{E}$ is said to be well-defined if 
$p^{-1}(e) \cap q^{-1}(-e) = \emptyset$ when $e \in \{ +\infty, - \infty \}$. Here $p^{-1}(e)$ is the preimage of $e$
under $p$.

Let $H$ be a nonempty set of functions $h \colon X \to \overline{E}$. The set $H$ is said to be closed under addition
if for any $h_1, h_2 \in H$ the sum $h_1 + h_2$ is well-defined and belongs to $H$.

Let $\mathfrak{F}$ be a filter on $X$. Denote by $PF(X, \mathfrak{F}, \overline{E}, H)$ the set consisting of all pairs
of functions $(\Phi, \Psi)$ such that $\Phi \colon X \to \overline{E}$ is $H$-convex, $\Psi \colon X \to \overline{E}$
is $H$-concave, and there exists $S \in \mathfrak{F}$ such that $S \subset \dom \Phi \cap \dom \Psi$.

\begin{remark}
We will only consider values of the sum $\Phi + \Psi$ in a ``neighbourhood'' of a given point $x$, since the
sum $\Phi + \Psi$ will serve as an approximation of the increment of a function in this ``neighbourhood''. Therefore,
it is natural to demand that the sum $\Phi + \Psi$ is well-defined and finite only in a ``neighbourhood'' of $x$.
Thus, the filter $\mathfrak{F}$ will usually be the filter of neighbourhoods of a point $x$.
\end{remark}

In subsequent we will consider an approximation of the increment of a function by the sum of $H$-convex and
$H$-concave functions. Different pairs of $H$-convex and $H$-concave functions could define the same
approximation. Therefore it is convenient to introduce the set of equivalence classes of pairs of $H$-convex and
$H$-concave functions that define the same approximation. 

Let us introduce a binary relation $\sigma$ on the set $PF(X, \mathfrak{F}, \overline{E}, H)$. We say that 
$((\Phi_1, \Psi_1), (\Phi_2, \Psi_2)) \in \sigma$, where $(\Phi_i, \Psi_i) \in PF(X, \mathfrak{F}, \overline{E}, H)$,
$i \in \{1, 2\}$, if and only if there exists $S \in \mathfrak{F}$ such that $S \subset \dom \Phi_i \cap \dom \Psi_i$,
$i \in \{1, 2 \}$ and
\[
\Phi_1(x) + \Psi_1(x) = \Phi_2(x) + \Psi_2(x) \quad \forall x \in S.
\]
It is easy to see that $\sigma$ is an equivalence relation on $PF(X, \mathfrak{F}, \overline{E}, H)$. The quotient set
of $PF(X, \mathfrak{F}, \overline{E}, H)$ by $\sigma$ is denoted by $EPF(X, \mathfrak{F}, \overline{E}, H)$. 
If $(\Phi, \Psi) \in PF(X, \mathfrak{F}, \overline{E}, H)$, then the equivalence class of $(\Phi, \Psi)$ under $\sigma$
is denoted by $[\Phi, \Psi]$.

Since an $H$-convex (or $H$-concave) function is defined by a subset of the set $H$, one can consider the set
$PS(H, \mathfrak{F})$ instead of $PF(X, \mathfrak{F}, \overline{E}, H)$, where $PS(H, \mathfrak{F})$ is the set
consisting of all pairs $(U, V)$ of nonempty sets $U, V \subset H$ such that 
$(\sup_{h \in U} h, \inf_{p \in V} p) \in PF(X, \mathfrak{F}, \overline{E}, H)$. Let us introduce a binary relation
$\widehat{\sigma}$ on the set $PS(H, \mathfrak{F})$, which is similar to the relation $\sigma$. Define 
$((U_1, V_1), (U_2, V_2)) \in \widehat{\sigma}$, where  $(U_i, V_i) \in PS(H, \mathfrak{F})$,  $i \in \{1, 2 \}$, if and
only if 
\[
\left( (\sup_{h_1 \in U_1} h_1, \inf_{p_1 \in V_1} p_1), (\sup_{h_2 \in U_2} h_2, \inf_{p_2 \in V_2} p_2) \right) 
\in \sigma
\]
It is obvious that $\widehat{\sigma}$ is an equivalence relation on $PS(H, \mathfrak{F})$. The quotient set of 
$PS(H, \mathfrak{F})$ by $\widehat{\sigma}$ is denoted by $EPS(H, \mathfrak{F})$. If $(U, V) \in PS(H, \mathfrak{F})$,
then the equivalence class of the element $(U, V)$ under $\widehat{\sigma}$ is denoted by $[U, V]$.

Introduce the operations of addition and scalar multiplication on the set $EPF(X, \mathfrak{F}, \overline{E}, H)$. Let
$\alpha \in \mathbb{R}$ be arbitrary. Suppose that $0 \in H$ in the case $\alpha = 0$, and the set $H$ is a cone 
(i.~e. for any $h \in H$ and for all $\lambda > 0$ one has $p = \lambda h \in H$) in the case $\alpha \ne 0$. 
Denote $(-H) = \{ -h \mid h \in H \}$.

Let $(\Phi, \Psi) \in PF(X, \mathfrak{F}, \overline{E}, H)$. Define
\[
\alpha [ \Phi, \Psi ] = \begin{cases}
[\alpha \Phi, \alpha \Psi ] \in EPF(X, \mathfrak{F}, \overline{E}, H), \mbox{ if } \alpha > 0, \\
[\alpha \Psi, \alpha \Phi ] \in EPF(X, \mathfrak{F}, \overline{E}, -H), \mbox{ if } \alpha < 0, \\
[0, 0], \mbox{ if } \alpha = 0.
\end{cases}
\]
It is easy to check that the previous definition is correct in the sense that if 
$(\Phi_1, \Psi_1), (\Phi_2, \Psi_2) \in [\Phi, \Psi]$, then 
$[\alpha \Phi_1, \alpha \Psi_1] = [\alpha \Phi_2, \alpha \Psi_2]$ in the case $\alpha > 0$ and 
$[\alpha \Psi_1, \alpha \Phi_1 ] = [\alpha \Psi_2, \alpha \Phi_2 ]$ in the case $\alpha < 0$.

Suppose now that the set $H$ is closed under addition. Let 
$(\Phi_1, \Psi_1), (\Phi_2, \Psi_2) \in PF(X, \mathfrak{F}, \overline{E}, H)$. Then we set
$[\Phi_1, \Psi_1] + [\Phi_2, \Psi_2] = [\Phi_1 + \Phi_2, \Psi_1 + \Psi_2]$. It is easy to verify that the given
definition of the sum is correct.

The operations of addition and scalar multiplications on the set $EPS(H, \mathfrak{F})$ are defined in a similar way.

\begin{remark}
(i) {The construction of the sets $EPS(H, \mathfrak{F})$ and $EPF(X, \mathfrak{F}, \overline{E}, H)$ is similar to the
construction of the space of convex sets \cite{Radstrom} and the set of the differences of sublinear
functions \cite{DemRub, PalUrb}.
}

\noindent(ii) {Let $X$ be a topological vector space and $\mathfrak{F}$ be the filter of neighbourhoods of the origin.
Then we write $PF(X, \overline{E}, H)$ instead of $PF(X, \mathfrak{F}, \overline{E}, H)$ and use analogous abbreviations
for $EPF(X, \mathfrak{F}, \overline{E}, H)$, $PS(H, \mathfrak{F})$ and $EPS(H, \mathfrak{F})$.
}
\end{remark}

We need to introduce other equivalence relations on the set $PF(X, \mathfrak{F}, \overline{E}, H)$ in order to
avoid ambiguity in the definition of abstract codifferentiable function.

Let $X$ be a topological vector space (normed space) over the field of real or complex numbers, and $E$ be an order
complete Hausdorff topological vector lattice. Define a binary relation $\sigma_w$ (and $\sigma_s$) on the set 
$PF(X, \overline{E}, H)$. Let $(\Phi_i, \Psi_i) \in PF(X, \overline{E}, H)$, $i \in \{1, 2\}$ be arbitrary. Set
\[
((\Phi_1, \Psi_1), (\Phi_2, \Psi_2)) \in \sigma_w \quad 
\big( ((\Phi_1, \Psi_1), (\Phi_2, \Psi_2)) \in \sigma_s \big)
\]
if and only if $\Phi_1(0) + \Psi_1(0) = \Phi_2(0) + \Psi_2(0)$ and for any $x \in X$
\begin{gather*}
\lim_{\alpha \downarrow 0} \frac{1}{\alpha} 
(\Phi_1(\alpha x) + \Psi_1(\alpha x) - \Phi_2(\alpha x) - \Psi_2(\alpha x)) = 0 \\
\left( \lim_{x \to 0} \frac{1}{\| x \|}
(\Phi_1(x) + \Psi_1(x) - \Phi_2(x) - \Psi_2(x)) = 0 \right).
\end{gather*}
Hereafter we write $\alpha \downarrow 0$ instead of $\alpha \in \mathbb{R}$, $\alpha \to +0$. It is easy to see that
$\sigma_w$ and $\sigma_s$ are equivalence relations on the set $PF(X, \overline{E}, H)$. The quotient set of 
$PF(X, \overline{E}, H)$ by $\sigma_w$ is denoted by $EPF_w(X, \overline{E}, H)$, and the quotient set of 
$PF(X, \overline{E}, H)$ by $\sigma_s$ is denoted by $EPF_s(X, \overline{E}, H)$.
If  $(\Phi, \Psi) \in PF(X, \overline{E}, H)$, then the equivalence class of $(\Phi, \Psi)$ under $\sigma_w$ is denoted
by $[\Phi, \Psi]_w$, and the equivalence class of $(\Phi, \Psi)$ under $\sigma_s$ is denoted by $[\Phi, \Psi]_s$.

One can introduce similar equivalence relations $\widehat{\sigma}_w$ and $\widehat{\sigma}_s$ on the set $PS(H)$, and
the quotient sets $EPS_w(H)$ and $EPS_s(H)$. Also, it is easy to define the operations of addition and scalar
multiplication on the sets $EPF_w(X, \overline{E}, H)$, $EPF_s(X, \overline{E}, H)$, $EPS_w(H)$ and $EPS_s(H)$ in the
same way as we defined these operations on the sets $EPF(X, \mathfrak{F}, \overline{E}, H)$ and $EPS(H, \mathfrak{F})$.

Let us give several definitions that is useful for the study of continuity. Let, as earlier, $X$ be a nonvoid set and 
$f \colon X \to EPS(H, \mathfrak{F})$ be an arbitrary mapping (one can also consider $f \colon X \to EPS_w(H)$ or 
$f \colon X \to EPS_s(H)$). A mapping $\varphi = (\varphi_1, \varphi_2) \colon X \to PS(H, \mathfrak{F})$, where
$\varphi_i \colon X \to S(H)$, $i \in \{1, 2\}$, is said to be a selection of the mapping $f$ if 
$\varphi(x) \in f(x)$ for all $x \in X$. Here $S(H)$ is the set of all nonempty subsets of $H$.

Let $X$ and $H$ be equipped with topologies, and let $\Omega$ be a neighbourhood of a point $x \in X$.

\begin{definition}
A mapping $f \colon \Omega \to EPS(H,  \mathfrak{F})$ is called lower semicontinuous (upper semicontinuous, continuous)
at the point $x$ if there exists a selection $\varphi = (\varphi_1, \varphi_2) \colon \Omega \to PS(H,  \mathfrak{F})$
of $f$ such that the set-valued mappings $\varphi_1$, $\varphi_2$ are lower semicontinuous (upper semicontinuous,
continuous) at the point $x$. If $H$ is a metric space, then the mapping $f$ is called Hausdorff continuous at the point
$x$ if there exists a selection $\varphi = (\varphi_1, \varphi_2) \colon \Omega \to PS(H, \mathfrak{F})$ of $f$ such
that the set-valued mappings $\varphi_1$, $\varphi_2$ are Hausdorff continuous at this point.
\end{definition}

\section{Abstract codifferentiable functions}

In the following subsections we give definitions of $H$-codifferentiable and $H$-quasidifferentiable functions and
discuss related notions. Also, we show that many well-known classes of nonsmooth functions are, in fact,
$H$-codifferentiable or $H$-quasidifferentiable for particular sets $H$.

\subsection{A definition of abstract codifferentiable functions}

Hereafter, let $X$ be a Hausdorff topological vector space over the field of real or complex numbers, $E$ be an order
complete Hausdorff topological vector lattice, $H$ be a nonempty set of functions $h \colon X \to \overline{E}$, and 
$\Omega \subset X$ be an open set. Denote the closure of a subset $A \subset T$ of a topological
space $T$ by $\cl A$, and the convex hull of a subset $A \subset L$ of a linear space $L$ by $\co A$. 

\begin{definition}
A function $F \colon \Omega \to E$ is said to be weakly $H$-codifferentiable (or G\^ateaux $H$-codifferentiable, or
weakly abstract codifferentiable with respect to $H$) at a point $x \in \Omega$ if there exists an element 
$\delta F_H [x] \in EPF_w(X, \overline{E}, H)$ for which there exists a pair $(\Phi, \Psi) \in \delta F_H[x]$ such
that $\Phi(0) + \Psi(0) = 0$ and for any admissible argument increment $\Delta x \in X$ 
(i.e. $\co\{ x, x + \Delta x \} \subset ( \Omega \cap \dom \Phi \cap \dom \Psi ))$ the following holds
\[
F(x + \Delta x) - F(x) = \Phi(\Delta x) + \Psi(\Delta x) + o(\Delta x, x),
\]
where $o(\alpha \Delta x, x) / \alpha \to 0$ as $\alpha \downarrow 0$. The element $\delta F_H [x]$ is called a weak
$H$-derivative (or G\^ateaux $H$-derivative) of the function $F$ at the point $x$.
\end{definition}

It is clear that if a function $F$ is weakly $H$-codifferentiable at a point $x$, then any pair
$(\Phi, \Psi) \in \delta F_H[x]$ satisfies all assumptions of the previous definition, i.~e. the definition of 
weakly $H$-codifferentiable function does not depend on the choice of a pair $(\Phi, \Psi) \in \delta F_H(x)$.

\begin{definition}
Let $X$ be a normed space. A function $F \colon \Omega \to E$ is said to be strongly $H$-codifferentiable (or Fr\'echet
$H$-codifferentiable) at a point $x \in \Omega$ if there exists an element $F'_H [x] \in EPF_s(X, \overline{E}, H)$
for which there exists a pair $(\Phi, \Psi) \in F'_H[x]$ such that $\Phi(0) + \Psi(0) = 0$ and for any
admissible $\Delta x \in X$
\[
F(x + \Delta x) - F(x) = \Phi(\Delta x) + \Psi(\Delta x) + o(\Delta x, x),
\]
where $o(\Delta x, x) / \| \Delta x \| \to 0$ as $\Delta x \to 0$. The element $F'_H[x]$ is called a strong
$H$-derivative (or Fr\'echet $H$-derivative) of the function $F$ at the point $x$.
\end{definition}

It is easy to check that the weak (strong) $H$-derivative of a function $F \colon \Omega \to E$ at a point 
$x \in \Omega$ is uniquely defined. Also, it is clear that if a function $F \colon \Omega \to E$ is strongly
$H$-codifferentiable at a point $x$, then $F$ is weakly $H$-codifferentiable at this point and for any
$(\Phi, \Psi) \in F'_H[x]$ one has $(\Phi, \Psi) \in \delta F_H[x]$ (the opposite inclusion does not hold true in the
general case).

\begin{remark}
One can consider the definition of $H$-codifferentiation in a more general framework. Indeed, let $X$ be a vector
space, $E$ be a complete vector lattice, $\Omega \subset X$ be an arbitrary set. Denote by
\[
\core \Omega = \{ x \in \Omega \mid \forall g \in X \: \exists \alpha_g > 0 \colon x + \alpha g \in \Omega \quad
\forall \alpha \in (0, \alpha_g) \}.
\]
the algebraic interior of the set $\Omega$. Let $\mathfrak{F} = \{ S \subset X \mid 0 \in \core S \}$, and suppose that
$\core \Omega \ne \emptyset$.

A function $F \colon \Omega \to E$ is said to be order $H$-codifferentiable at a point $x \in \core \Omega$ if there
exists an element $\delta_o F_H [x] \in EPF(X, \mathfrak{F}, \overline{E}, H)$ for which there exists a pair 
$(\Phi, \Psi) \in \delta_o F_H[x]$ such that $\Phi(0) + \Psi(0) = 0$ and for any argument increment 
$\Delta x \in E$ such that  $\co\{ x, x + \Delta x \} \subset \core ( \Omega \cap \dom \Phi \cap \dom \Psi ))$ the
following holds
\[
\olim_{\alpha \downarrow 0} | F(x + \Delta x) - F(x) - \Phi(\Delta x) - \Psi(\Delta x) | / \alpha = 0,
\]
where $\olim$ stands for the order limit in the lattice $E$.

Let $X$ be a topological vector space. One can also consider the notion of $H$-codifferentiability for a function
defined on the set $\Omega \cap \mathcal{K}$, where $\mathcal{K} \subset X$ is a cone, or on a closed set $M \subset X$.
In these cases, the $H$-derivative of a function is an element of $EPF(\mathcal{K}, \mathfrak{F}, \overline{E}, H)$,
where $\mathfrak{F} = \{ S \subset \mathcal{K} \mid 0 \in \interior_{\mathcal{K}} S \}$, 
$\interior_{\mathcal{K}}$ stands for the interior of a set in the topological subspace $\mathcal{K}$ of the space
$X$, and $\mathcal{K} \subset X$ is either an arbitrary cone or some kind of a tangent cone to the set $M$.

We will not consider the generalization of $H$-codifferentiability suggested above. The interested reader can transfer
main results obtained in the article to these more general cases.
\end{remark}

Let a function $F \colon \Omega \to E$ be weakly $H$-codifferentiable at a point $x \in \Omega$, and let 
$(\Phi, \Psi) \in \delta F_H[x]$ be arbitrary. Then, by the definitions of abstract convex and abstract concave
functions, there exist nonempty sets $U, V \subset H$ such that
\begin{equation} \label{RepresOfAConvAConcaveFuncs}
\Phi(y) = \sup_{h \in U} h(y), \quad \Psi(y) = \inf_{p \in V} p(y) \quad \forall y \in X.
\end{equation}
We denote the equivalence class $[U, V]_w \in EPS_w(H)$ by $D^w_H F(x)$. The set $D^w_H F(x)$ is called a weak
$H$-codifferential (or G\^ateaux $H$-codifferential) of the function $F$ at the point $x$. It is easy to check that
$D^w_H F(x)$ does not depend on the choice of $(\Phi, \Psi) \in \delta F_H[x]$ and the choice of the sets 
$U, V \subset H$ satisfying (\ref{RepresOfAConvAConcaveFuncs}). Hence the weak $H$-codifferential of the function $F$
at the point $x$ is unique. One can analogously define a strong $H$-codifferential (or Fr\'echet $H$-codifferential)
$D_H^s F(x)$ of the function $F$ at the point $x$.

\begin{definition}
Let a function $F \colon \Omega \to E$ be weakly (strongly) $H$-codifferentiable at a point $x \in \Omega$, and suppose
that $0 \in H$. The function $F$ is said to be weakly (strongly) $H$-hypodifferentiable at $x$ if there exists an
$H$-convex function $\Phi \colon X \to \overline{E}$ such that $\delta F_H [x] = [\Phi, 0]_w$ 
($F'_H[x] = [\Phi, 0]_s$). The function $F$ is said to be weakly (strongly) $H$-hyperdifferentiable at $x$ if there
exists an $H$-concave function $\Psi$ such that $\delta F_H [x] = [0, \Psi]_w$ ($F'_H [x] = [0, \Psi]_s$). 
\end{definition}

Although the $H$-derivative of a function is unique, in the general case there exist $(\Phi_i, \Psi_i) \in F'_H[x]$, 
$i \in \{1, 2\}$ such that $[\Phi_1, \Psi_1] \ne [\Phi_2, \Psi_2]$. The following example shows the difference between
equivalence relations $\sigma$ and $\sigma_s$.

\begin{example}
Let $X = E = \mathbb{R}$, $H$ be the set of all affine functions, i.e. 
\[
H = \{ h \colon \mathbb{R} \to \mathbb{R} \mid h(x) = ax + b, \mbox{ where } a, b, x \in \mathbb{R} \},
\]
and  $F(x) = x^4$ for all $x \in \mathbb{R}$. It is clear that $F$ is strongly $H$-codifferentiable at the
point $x = 0$, and $F'_H[0] = [0, 0]_s$. Define $\Phi(x) = x^2$, $x \in \mathbb{R}$. It is easy to verify that $\Phi$
is $H$-convex and $[\Phi, 0] \ne [0, 0]$, despite the fact that $(\Phi, 0) \in F'_H[0]$, i.~e. 
$[\Phi, 0]_s = [0, 0]_s$.
\end{example}

Let us introduce the important concept of continuously $H$-codifferentiable functions. Let $H$ be endowed with
a topology.

\begin{definition}
A function $F \colon \Omega \to E$ is said to be continuously (upper semicontinuously, lower semicontinuously or, in
the case when $H$ is equipped with a metric, Hausdorff continuously) weakly $H$-codifferentiable at a point 
$x \in \Omega$ if the function $F$ is weakly $H$-codifferentiable in a neighbourhood $\mathcal{O}$ of $x$, and 
the mapping $y \to D^w_H F(y)$, $y \in \mathcal{O}$ is continuous (upper semicontinuous, lower semicontinuous, Hausdorff
continuous) at $x$. Continuously strongly $H$-codifferentiable functions are defined in the same way.
\end{definition}

\begin{definition}
Let $0 \in H$. A function $F \colon \Omega \to E$ is said to be continuously weakly $H$-hypodifferentiable at a point
$x \in \Omega$ if the function $F$ is weakly $H$-hypodifferentiable in a neighbourhood $\mathcal{O}$ of $x$ and there
exists a continuous mapping $\varphi \colon \mathcal{O} \to S(H)$ such that $(\varphi(y), 0) \in D^w_H F(y)$ for all 
$y \in \mathcal{O}$. Other types of continuity (semicontinuity) of $H$-hypodifferentiable and $H$-hyperdifferentiable
functions are defined in a similar way.
\end{definition}

\begin{remark}
It is to be mentioned that the theory of continuously $H$-codifferentiable functions is closely related to the theory
of continuous approximations of nonsmooth functions \cite{RubinovZaff, Zaffaroni}.
\end{remark}

Let us give an auxiliary definition that will be useful in subsequent.

\begin{definition}
Suppose that $X$ is a normed space, and $E$ is an order complete normed lattice. Let a function $F \colon \Omega \to E$
be weakly (strongly) $H$-codifferentiable at a point $x \in \Omega$. The weak (strong) $H$-derivative of $F$ at $x$ is
said to be Lipschitz continuous in a neighbourhood of zero (or to satisfy the Lipschitz condition in a neighbourhood of
zero) if there exists $(\Phi, \Psi) \in \delta F_H[x]$ ($(\Phi, \Psi) \in F'_H[x]$) such that the functions
$\Phi(\cdot)$ and $\Psi(\cdot)$ are Lipschitz continuous in a neighbourhood of zero.
\end{definition}

Note an obvious property of an $H$-codifferentiable function which $H$-derivative is Lipschitz continuous at the
origin.

\begin{proposition} \label{LipschitzContinuity}
Let $X$ be a normed space, $E$ be an order complete normed lattice, and a function $F \colon \Omega \to E$ be weakly
$H$-codifferentiable at a point $x \in \Omega$. Suppose that $\delta F_H[x]$ is Lipschitz continuous in a neighbourhood
of zero. Then there exists $L > 0$ such that for any admissible argument increment $\Delta x \in X$ there exists
$\alpha_0 > 0$ such that
\[
\| F(x + \alpha \Delta x)  - F(x) \| \le L \alpha \| \Delta x \| \quad \forall \alpha \in (0, \alpha_0).
\]
Moreover, if $F$ is strongly $H$-codifferentiable at $x$ and $F'_H[x]$ is Lipschitz continuous in a neighbourhood of
zero, then there exists $L > 0$ and $r > 0$ such that
\[
\| F(x + \Delta x) - F(x) \| \le L \| \Delta x \| \quad \forall \Delta x \in X, \| \Delta x \| \le r,
\]
and, in particular, the function $F$ is continuous and calm at the point $x$.
\end{proposition}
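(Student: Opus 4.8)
The plan is to unwind the definitions of weak/strong $H$-codifferentiability and the Lipschitz continuity of the $H$-derivative, and then simply read off the desired growth estimates from the asymptotic expansion of the increment together with the Lipschitz bounds on $\Phi$ and $\Psi$. First I would fix a pair $(\Phi,\Psi)\in\delta F_H[x]$ (respectively $(\Phi,\Psi)\in F'_H[x]$) witnessing the Lipschitz continuity in a neighbourhood of zero, so that there are constants $L_\Phi,L_\Psi>0$ and a radius $\rho>0$ with $\|\Phi(y_1)-\Phi(y_2)\|\le L_\Phi\|y_1-y_2\|$ and $\|\Psi(y_1)-\Psi(y_2)\|\le L_\Psi\|y_1-y_2\|$ for $y_1,y_2$ in the ball of radius $\rho$. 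Combining this with $\Phi(0)+\Psi(0)=0$, I get $\|\Phi(y)+\Psi(y)\|\le(L_\Phi+L_\Psi)\|y\|$ for all $\|y\|\le\rho$. Set $L_0:=L_\Phi+L_\Psi$.

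For the weak statement, let $\Delta x\in X$ be an admissible argument increment. By the definition of weak $H$-codifferentiability, $F(x+\alpha\Delta x)-F(x)=\Phi(\alpha\Delta x)+\Psi(\alpha\Delta x)+o(\alpha\Delta x,x)$ with $o(\alpha\Delta x,x)/\alpha\to 0$ as $\alpha\downarrow 0$; hence there is $\alpha_1>0$ with $\|o(\alpha\Delta x,x)\|\le\alpha\|\Delta x\|$ for $\alpha\in(0,\alpha_1)$, and there is $\alpha_2>0$ with $\alpha\|\Delta x\|\le\rho$ for $\alpha\in(0,\alpha_2)$. Put $\alpha_0:=\min\{\alpha_1,\alpha_2\}$. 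For $\alpha\in(0,\alpha_0)$ the triangle inequality gives $\|F(x+\alpha\Delta x)-F(x)\|\le\|\Phi(\alpha\Delta x)+\Psi(\alpha\Delta x)\|+\|o(\alpha\Delta x,x)\|\le L_0\alpha\|\Delta x\|+\alpha\|\Delta x\|$, so $L:=L_0+1$ works. One minor point worth spelling out: the norm on $E$ must be monotone (as it is on an order complete normed lattice) or at least the relevant estimates phrased via the lattice operations must survive passage to the norm; this is exactly where ``normed lattice'' rather than merely ``normed space'' is used, and I would remark on it but not belabour it.

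For the strong statement, the argument is the same but uniform in $\Delta x$: from $o(\Delta x,x)/\|\Delta x\|\to 0$ as $\Delta x\to 0$ there is $r_1>0$ with $\|o(\Delta x,x)\|\le\|\Delta x\|$ whenever $\|\Delta x\|\le r_1$; take $r:=\min\{r_1,\rho\}$ and $L:=L_0+1$ as before, giving $\|F(x+\Delta x)-F(x)\|\le L\|\Delta x\|$ for $\|\Delta x\|\le r$. Continuity at $x$ is then immediate (let $\Delta x\to 0$), and calmness at $x$ is precisely the assertion that $\|F(x+\Delta x)-F(x)\|\le L\|\Delta x\|$ holds for all $\Delta x$ in some ball, which is what we have proved.

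I do not anticipate a genuine obstacle here; the proposition is essentially a bookkeeping exercise. The only place that requires a moment's care is ensuring that ``admissible argument increment'' is compatible with the chosen $(\Phi,\Psi)$: admissibility requires $\co\{x,x+\Delta x\}\subset\Omega\cap\dom\Phi\cap\dom\Psi$, and for the scaled increments $\alpha\Delta x$ with small $\alpha$ this stays admissible since $\Omega$ is open and $0\in\core(\dom\Phi\cap\dom\Psi)$ along the ray (indeed $\dom\Phi\cap\dom\Psi$ contains a set from the neighbourhood filter). I would simply note that for $\alpha\in(0,1)$, $x+\alpha\Delta x\in\co\{x,x+\Delta x\}$, so no new admissibility hypothesis is needed. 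Beyond that, the proof is just the two triangle-inequality estimates above plus the observation that ``continuous and calm'' are restatements of the strong bound.
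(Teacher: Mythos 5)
Your proof is correct, and the paper itself offers no proof at all --- it states the proposition as an ``obvious property,'' and your triangle-inequality argument (Lipschitz bound on $\Phi+\Psi$ via $\Phi(0)+\Psi(0)=0$, plus absorbing the remainder term for small $\alpha$ resp.\ small $\|\Delta x\|$) is exactly the routine verification the author is leaving to the reader. The admissibility and monotone-norm caveats you flag are handled adequately and do not affect the argument.
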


\subsection{Examples of abstract codifferentiable functions}

In this subsection we show that some well-known classes of nonsmooth functions are $H$-codifferentiable for particular
sets $H$.

\begin{example}
Let $X$ be a normed space, $E$ be an order complete normed lattice, and let $\mathcal{B}(X, E) \subset H$, i.~e. 
$H$ includes the space of all bounded linear operators mapping $X$ to $E$. Then it is clear that if a function 
$F \colon \Omega \to E$ is G\^ateaux (Fr\'echet) differentiable at a point $x \in \Omega$, then $F$ is weakly
(strongly) $H$-codifferentiable at this point. Moreover, if $\delta F[x]$ ($F'[x]$) is the G\^ateaux (Fr\'echet)
gradient of the function $F$ at the point $x$, then
\begin{gather*}
\delta F_H[x] = [\delta F[x], 0]_w = [0, \delta F[x]]_w,
\quad D^w_H F(x) = [ \{ \delta  F[x] \}, \{ 0 \} ]_w = [ \{ 0 \}, \{ \delta  F[x] \} ]_w \\
(F'_H[x] = [F'[x], 0]_s = [0, F'[x]]_s, \quad D^s_H F(x) = [ \{ F'[x] \}, \{ 0 \} ]_s = [ \{ 0 \}, \{ F'[x] \} ]_s).
\end{gather*}
The space $H$ can be equipped with the standard operator norm. Then it is easy to see that if the function 
$F \colon \Omega \to E$ is continuously G\^ateaux (Fr\'echet) differentiable at a point $x \in \Omega$, then $F$ is
Hausdorff continuously weakly (strongly) $H$-codifferentiable at this point.
\end{example}

\begin{example} \label{ExampleCodiff}
Let $X$ be a real normed space, $E = \mathbb{R}$, and let $H$ be the set of all continuous affine functions
mapping $X$ to $\mathbb{R}$, i.~e.
\[
H = \{ h \colon X \to \mathbb{R} \mid h(\cdot) = a + p(\cdot), a \in \mathbb{R}, p \in X^* \},
\]
where, as usual, $X^*$ is the topological dual space of $X$. The set $H$ can be identified with the space 
$\mathbb{R} \times X^*$. Thus, $H$ is a linear space that can be endowed with the norm
\[
\| h \|_r = \left( |a|^r + \| p \|^r \right)^{\frac 1r}, \quad h = (a, p) \in H = \mathbb{R} \times X^*,
\]
where $1 \le r < \infty$, or $\| h \|_{\infty} = \max\{|a|, \| p \| \}$.

It is well-known (see~\cite{EkelandTemam}, proposition I.3.1) that a function $\Phi \colon X \to \overline{\mathbb{R}}$
is abstract convex (abstract concave) with respect to the set $H$ under consideration if and only if $\Phi$ is a proper
lower semicontinuos convex function (proper upper semicontinuous concave function). 
Hence, a function $F \colon \Omega \to \mathbb{R}$ is weakly $H$-codifferentiable at a point $x \in \Omega$ if and only
if there exist a proper lower semicontinuos (l.s.c.) convex function $\Phi \colon X \to \overline{\mathbb{R}}$ and 
a proper upper semicontinuous (u.s.c.) concave function $\Psi \colon X \to \overline{\mathbb{R}}$ such that 
$0 \in \interior (\dom \Phi \cap \dom \Psi)$, $\Phi(0) + \Psi(0) = 0$, and for any admissible argument 
increment $\Delta x \in X$
\[
F(x + \Delta x) - F(x) = \Phi(\Delta x) + \Psi(\Delta x) + o(\Delta x, x),
\]
where $o(\alpha \Delta x, x) / \alpha \to 0$ as $\alpha \downarrow 0$.

We need the following proposition in order to give another characterization of $H$-codifferentiability for the set $H$
under consideration. Let $x \in X$ and $r > 0$. Denote $\mathcal{O}(x, r) = \{ y \in X \mid \| x - y \| < r \}$ and 
$B(x, r) = \{ y \in X \mid \| x - y \| \le r \}$.

\begin{proposition} 
Let $X$ be a real Banach space and $f \colon X \to \overline{\mathbb{R}}$ be a proper l.s.c. convex function such that 
$0 \in \interior \dom f$. Then there exist $r > 0$ and a convex bounded set $A \subset \mathbb{R} \times X^*$
that is compact in the topological product $(\mathbb{R}, \tau) \times (X^*, w^*)$ and such that
\begin{equation} \label{representOfConvFunc}
f(x) = \max_{(a, p) \in A} (a + p(x)) \quad \forall x \in B(x, r).
\end{equation}
Here $\tau$ is the standard topology on $\mathbb{R}$ and $w^*$ is the weak${}^*$ topology on $X^*$.
\end{proposition}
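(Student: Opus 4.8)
The plan is to reduce the statement to a standard fact about conjugate functions of proper l.s.c. convex functions and then localize it to a ball. First I would recall that since $f$ is proper, l.s.c.\ and convex on the Banach space $X$, its Fenchel conjugate $f^* \colon X^* \to \overline{\mathbb{R}}$ is proper, $w^*$-l.s.c.\ and convex, and the biconjugate theorem gives
\[
f(x) = \sup_{p \in X^*} \bigl( p(x) - f^*(p) \bigr) = \sup_{(a,p) \in \widetilde{A}} \bigl( a + p(x) \bigr)
\quad \forall x \in X,
\]
where $\widetilde{A} = \{ (a, p) \in \mathbb{R} \times X^* \mid a \le -f^*(p) \}$ is the hypograph of $-f^*$, a closed convex subset of $(\mathbb{R}, \tau) \times (X^*, w^*)$. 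This already yields the representation with a ``$\sup$'', but $\widetilde{A}$ is unbounded and the supremum need not be attained; the work is to replace $\widetilde{A}$ by a suitable bounded, $w^*$-compact convex subset $A$ on a neighbourhood $B(0, r)$ of the origin.

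The key step exploits the hypothesis $0 \in \interior \dom f$. By a standard result (e.g.\ the fact that a proper convex function is locally Lipschitz on the interior of its domain, or continuity at $0$), there exist $\rho > 0$ and $M > 0$ with $f(x) \le M$ for all $x \in B(0, 2\rho)$, and $f$ is Lipschitz with some constant $L$ on $B(0, \rho)$. From the Lipschitz bound one gets that the subdifferential $\partial f(x)$ is nonempty and norm-bounded by $L$ for every $x \in B(0, \rho)$; more precisely, setting
\[
A = \co^{w^*}\!\Bigl( \bigcup_{x \in B(0, \rho)} \{ (f(x) - p(x),\, p) \mid p \in \partial f(x) \} \Bigr),
\]
one checks that $A$ is contained in the bounded set $\{ (a, p) \mid \|p\| \le L,\ |a| \le M + L\rho \}$, hence its $w^*$-closure is $w^*$-compact by Banach--Alaoglu together with compactness of the bounded interval in $\mathbb{R}$, and $A$ is convex by construction. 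For each fixed $x \in B(0, \rho/2)$ (take $r = \rho/2$), and each $p \in \partial f(x)$, the affine function $y \mapsto f(x) + p(y - x) = (f(x) - p(x)) + p(y)$ is a global minorant of $f$ that touches $f$ at $x$, so $f(x) = \max_{(a,p) \in A}(a + p(x))$, with the max attained; the inequality ``$\ge$'' holds because every element of $A$ is (a limit of convex combinations of) global affine minorants of $f$, and the reverse because of the touching affine function. I would spell out the minorant property carefully: elements of the generating set are affine minorants of $f$ on all of $X$, convex combinations of affine minorants are affine minorants, and $w^*$-limits of affine minorants bounded as above remain minorants by $w^*$-lower semicontinuity considerations on the fixed point $x$.

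The main obstacle I anticipate is the careful handling of the $w^*$-closure: one must ensure that passing to $\co^{w^*}$ does not destroy the minorant property, i.e.\ that every $(a, p) \in A$ still satisfies $a + p(y) \le f(y)$ for all $y \in X$ (or at least for all $y$ in a fixed ball, which suffices). This follows because for fixed $y$ the map $(a, p) \mapsto a + p(y)$ is $w^*$-continuous on $\mathbb{R} \times X^*$, so the set $\{ (a,p) \mid a + p(y) \le f(y) \}$ is $w^*$-closed and convex, contains the generating set, hence contains $A$. A secondary technical point is justifying that $\partial f(x) \neq \emptyset$ and $\|p\| \le L$ for $p \in \partial f(x)$, $x \in B(0,\rho)$ — this is the standard consequence of local Lipschitz continuity of convex functions on Banach spaces, combined with the Hahn--Banach theorem to produce supporting functionals, and I would cite it rather than reprove it. With these pieces assembled, the representation \eqref{representOfConvFunc} on $B(0, r)$ follows, noting that the statement's ``$B(x,r)$'' should be read as the ball of radius $r$ about the origin (consistent with $0 \in \interior \dom f$ and $\Phi(0) + \Psi(0) = 0$ appearing in the codifferentiability definition).
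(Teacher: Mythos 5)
Your proposal is correct and follows essentially the same route as the paper: bound $f$ and its subdifferentials near the origin, form the $\tau\times w^*$-closed convex hull of the subgradient-generated affine minorants, invoke Banach--Alaoglu for compactness, and use the subgradient inequality (with equality at the touching point) for the max representation. Your explicit verification that passing to the closed convex hull preserves the minorant property (via $w^*$-continuity of evaluation at a fixed point) is a step the paper leaves implicit, and your reading of $B(x,r)$ as $B(0,r)$ correctly identifies a typo in the statement.
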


\begin{proof}
From the facts that the space $X$ is complete, $0 \in \interior \dom f$ and $f$ is a proper l.s.c. convex function it
follows that $f$ is continuous on $\interior \dom f$ (\cite{EkelandTemam}, corollary I.2.5), and for any 
$x \in \interior \dom f$ one has $\underline{\partial} f(x) \ne \emptyset$ (\cite{EkelandTemam}, proposition I.5.2),
where $\underline{\partial} f(x)$ is the subdifferential of the convex function $f$ at a point $x$. Thus, there exist 
$r > 0$ and $C > 0$ such that 
\begin{equation} \label{boundednessOfConvFunc}
|f(x)| \le C \quad \forall x \in \mathcal{O}(0, 4r).
\end{equation}

With the use of the definition of the subgradient of a convex function it is easy to show that there exists $M > 0$ 
($M \le C / r$) such that for all $x \in \mathcal{O}(0, 2r)$
\begin{equation} \label{boundednessOfSubdiff}
\| p \| \le M \quad \forall p \in \underline{\partial} f(x),
\end{equation}
i.~e. the subdifferential of $f$ is bounded on $\mathcal{O}(0, 2r)$.

Let a mapping $B(0, r) \ni x \to p[x] \in X^*$ be such that $p[x] \in \underline{\partial} f(x)$. Note that such
mapping exists, since $\underline{\partial} f(x) \ne \emptyset$ for all $x \in B(0, r)$. Introduce the set
\[
A = \cl \co \{ (a, p) \in \mathbb{R} \times X^* \mid a = f(x) - p[x](x), p = p[x], x \in B(0, r) \}.
\]
Here the closure is taken in the topology $\tau \times w^*$. The set $A$ is obviously convex. Taking into account
(\ref{boundednessOfConvFunc}) and (\ref{boundednessOfSubdiff}) one has that
\begin{equation} \label{inclusionInCompSet}
A \subset [ -C - r M, C + r M ] \times \{ p \in X^* \mid \| p \| \le M \}.
\end{equation}
Therefore the set $A$ is bounded and compact in the topology $\tau \times w^*$, since the set 
$\{ p \in X^* \mid \| p \| \le M \}$ is weak${}^*$ compact by the Banach-Alaoglu theorem, and the set on the
right-hand side of (\ref{inclusionInCompSet}) is compact in the topology $\tau \times w^*$ as the direct product of two
compact sets.

By the definition of the subgradient of a convex function one has that
$$
  f(y) \ge f(x) - p[x](x) + p[x](y) \quad \forall y \in X, \; \forall x \in B(0, r)
$$
and the last inequality turns into an equality when $y = x$. Hence the validity of (\ref{representOfConvFunc}) follows
from the definition of the set $A$.
\end{proof}

\begin{corollary} \label{represOfFamOfConvFunc}
Let $X$ be a Banach space and $\{ f_{\lambda} \}$, $\lambda \in \Lambda$ be a family of proper l.s.c.
convex functions mapping $X$ to $\overline{\mathbb{R}}$. Suppose that there exist $\rho > 0$ and $C_{\lambda} > 0$,
$\lambda \in \Lambda$ such that $|f_{\lambda}(x)| \le C_{\lambda}$ for all $x \in \mathcal{O}(0, \rho)$ and 
$\lambda \in \Lambda$. Then there exist $r > 0$ (depending only on $\rho$) and a family $\{ A_{\lambda} \}$, 
$\lambda \in \Lambda$ of subsets of the space $\mathbb{R} \times X^*$ such that for any $\lambda \in \Lambda$ the set
$A_{\lambda}$ is nonempty, convex, bounded and compact in the topology $\tau \times w^*$, and the following holds
\[
f_{\lambda}(x) = \max_{(a, p) \in A_{\lambda}}(a + p(x)) \quad \forall x \in B(0, r).
\]
\end{corollary}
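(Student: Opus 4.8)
The plan is to re-run the proof of the preceding proposition separately for each $f_{\lambda}$, keeping track of the fact that the only place where the radius $r$ is produced there is the passage from ``$0 \in \interior \dom f$ and lower semicontinuity'' to a bound of the form (\ref{boundednessOfConvFunc}); here such a bound is granted outright on $\mathcal{O}(0, \rho)$, so $r$ can be taken as a fixed fraction of $\rho$, independent of $\lambda$.

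First I would fix $r := \rho / 4$, which evidently depends only on $\rho$, and note that $B(0, r) \subset \mathcal{O}(0, 2r) \subset \mathcal{O}(0, \rho)$. Fix $\lambda \in \Lambda$. The inequality $|f_{\lambda}(x)| \le C_{\lambda}$ on $\mathcal{O}(0, \rho)$ shows that the convex function $f_{\lambda}$ is bounded above on a neighbourhood of each point of $\mathcal{O}(0, \rho)$, hence locally Lipschitz and, in particular, continuous on $\mathcal{O}(0, \rho)$; therefore $\mathcal{O}(0, \rho) \subset \interior \dom f_{\lambda}$ and $\underline{\partial} f_{\lambda}(x) \ne \emptyset$ for every $x \in \mathcal{O}(0, \rho)$ --- these are exactly the conclusions (\cite{EkelandTemam}, corollary I.2.5 and proposition I.5.2) used in the proof of the proposition. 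Repeating the subgradient estimate from that proof verbatim, with $C_{\lambda}$, $\rho$, $r$ in place of $C$, $4r$, $r$, I obtain $M_{\lambda} > 0$ (one may take $M_{\lambda} \le C_{\lambda} / r$) such that $\| p \| \le M_{\lambda}$ for all $p \in \underline{\partial} f_{\lambda}(x)$ and all $x \in \mathcal{O}(0, 2r)$.

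Next, choosing a selection $B(0, r) \ni x \mapsto p_{\lambda}[x] \in \underline{\partial} f_{\lambda}(x)$ (possible since $\underline{\partial} f_{\lambda}(x) \ne \emptyset$ on $B(0, r)$), I set
\[
A_{\lambda} = \cl \co \{ (a, p) \in \mathbb{R} \times X^* \mid a = f_{\lambda}(x) - p_{\lambda}[x](x),\ p = p_{\lambda}[x],\ x \in B(0, r) \},
\]
with the closure taken in the topology $\tau \times w^*$. Then $A_{\lambda}$ is nonempty (as $B(0, r) \ne \emptyset$) and convex, and by the two bounds above it is contained in $[ -C_{\lambda} - r M_{\lambda},\, C_{\lambda} + r M_{\lambda} ] \times \{ p \in X^* \mid \| p \| \le M_{\lambda} \}$, which is $\tau \times w^*$-compact by the Banach--Alaoglu theorem; hence $A_{\lambda}$ is bounded and compact in $\tau \times w^*$. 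Finally, the subgradient inequality $f_{\lambda}(y) \ge f_{\lambda}(x) - p_{\lambda}[x](x) + p_{\lambda}[x](y)$ for all $y \in X$ and $x \in B(0, r)$, which becomes an equality at $y = x$, yields $f_{\lambda}(x) = \max_{(a, p) \in A_{\lambda}}(a + p(x))$ for all $x \in B(0, r)$, exactly as in the proposition.

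The only point requiring care --- and the sole \emph{obstacle} --- is the bookkeeping of dependencies: one must verify that, although $M_{\lambda}$ and the enclosing box for $A_{\lambda}$ do depend on $\lambda$ through $C_{\lambda}$ (which the statement permits), the radius $r$ does not, precisely because it is extracted before any $\lambda$-dependent constant enters the argument. Since the reasoning for each individual $\lambda$ is literally the proof of the proposition with the a priori bound $|f_{\lambda}(\cdot)| \le C_{\lambda}$ on $\mathcal{O}(0, \rho)$ substituted for (\ref{boundednessOfConvFunc}), no additional analytic input is needed.
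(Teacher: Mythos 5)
Your proof is correct and is precisely the argument the paper intends: the corollary is stated without proof as an immediate consequence of the proposition, and your re-run of that proof with $r = \rho/4$ fixed before any $\lambda$-dependent constant appears is exactly the required adaptation. The bookkeeping you flag (only $M_{\lambda}$ and the enclosing compact box depend on $\lambda$, via $C_{\lambda}$) is the whole content of the corollary, and you handle it correctly.
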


Let us give a description of $H$-codifferentiable functions for the set $H$ under considerations. Suppose that the
normed space $X$ is complete. By virtue of the previous proposition one has that a function 
$F \colon \Omega \to \mathbb{R}$ is weakly $H$-codifferentiable at a point $x \in \Omega$ if and only if there exist
bounded convex sets $A, B \subset \mathbb{R} \times X^*$ that are compact in the topology $\tau \times w^*$ and such
that for any admissible argument increment $\Delta x \in X$
\[
F(x + \Delta x) - F(x) = \max_{(a, p) \in A} (a + p(\Delta x)) + 
\min_{(b, q) \in B} (b + q(\Delta x)) + o(\Delta x, x),
\]
where $o(\alpha \Delta x, x) / \alpha \to 0$ as $\alpha \downarrow 0$. Thus, the function $F$ is weakly
$H$-codifferentiable at a point $x \in \Omega$ if and only if it is codifferentiable at this point 
(see~\cite{Dolgopolik1, DemRub, Kuntz, Zaffaroni}). Also it is easy to show that the function $F$ is Hausdorff
continuously weakly $H$-codifferentiable at a point $x \in \Omega$ if and only if $F$ is continuously codifferentiable
at this point. Moreover, $F$ is strongly $H$-codifferentiable if and only if $F$ is codifferentiable uniformly in
directions (see~\cite{DemRub, Dolgopolik1}). If $F$ is strongly $H$-codifferentiable at $x$, then we will call it
Fr\'echet (or strongly) codifferentiable at $x$.
\end{example}

\begin{remark}
The concept of codifferentiability in Banach lattices \cite{Zaffaroni} is, in fact, the particular case of
$H$-codifferentiability, when the set $H$ consists of all affine functions $h \colon X \to E$, 
$h(x) = a + A x$, where $a \in E$ and $A \colon X \to E$ is a linear operator.
\end{remark}

\begin{example} \label{ExampleCoexhauster}
Let $X$ be a real Banach space, $E = \mathbb{R}$, and let the set $H$ consist of all proper l.s.c. convex
functions $h \colon X \to \overline{\mathbb{R}}$ such that $0 \in \interior \dom h$. In this example we only consider
$H$-hyperdifferentiable functions, since the set of all $H$-hyperdifferentiable functions contains a certain class of
nonsmooth functions.

Suppose that a function $F \colon \Omega \to \mathbb{R}$ is weakly $H$-hyperdifferentiable at a point $x \in \Omega$,
i.~e. there exists a set $U \subset H$ such that for any admissible argument increment $\Delta x \in X$
\[
F(x + \Delta x) - F(x) = \inf_{h \in U} h(\Delta x) + o(\Delta x, x),
\]
where $o(\alpha \Delta x, x) / \alpha \to 0$ as $\alpha \downarrow 0$. Suppose also that there exist $\rho > 0$ and 
$C_h > 0$, $h \in U$ such that
\begin{equation} \label{AddPropForCoex}
|h(x)| \le C_h \quad \forall x \in \mathcal{O}(0, \rho), \forall h \in U.
\end{equation}
Then, applying corollary \ref{represOfFamOfConvFunc} one gets that there exists a family of convex bounded sets 
$A_h \subset \mathbb{R} \times X^*$, $h \in U$, which are compact in the topology $\tau \times w^*$ and such that for
any admissible argument increment $\Delta x \in X$
\[
F(x + \Delta x) - F(x) = \inf_{h \in U} \max_{(a, p) \in A_h} (a + p(\Delta x)) + o(\Delta x, x),
\]
where $o(\alpha \Delta x, x) / \alpha \to 0$ as $\alpha \downarrow 0$. Thus, the family 
$\overline{E}(x) = \{ A_h \subset \mathbb{R} \times X^* \mid h \in U \}$, that is said to be generated by $U$,
is a Dini upper coexhauster of the function $F$ at the point $x$ \cite{Demyanov}. Therefore, as it is easy to
check, a function $F$ has a Dini upper coexhauster at a point $x$ if and only if $F$ is weakly $H$-hyperdifferentiable
at this point and there exist $(U, \{ 0 \}) \in D^w_H F[x]$, $\rho > 0$ and $C_h > 0$, $h \in U$ such that
(\ref{AddPropForCoex}) holds true. The notion of coexhauster of a nonsmooth function was introduced by Aban'kin in
\cite{Abankin}, where the functions having upper coexhauster were called $H$-hyperdifferentiable
(see~also~\cite{Demyanov}).

We will say that a family of nonempty convex, bounded and compact in the topology $\tau \times w^*$ subsets
$\overline{E}(x)$ of $\mathbb{R} \times X^*$ is a Fr\'echet upper coexhauster of $F$ at $x$ if $F$ is Fr\'echet
$H$-hyperdifferentiable at this point and there exists $(U, \{0\}) \in D^s_H F(x)$ such that $\overline{E}(x)$ is
generated by $U$.
\end{example}

\begin{remark}
One can also consider an example, that is similar to the previous one, where the set $H$ coincides with the set of all
proper u.s.c. concave functions $h \colon X \to \overline{\mathbb{R}}$ such that $0 \in \interior \dom h$.
In this case, if a function $F \colon \Omega \to \mathbb{R}$ has a Dini lower coexhauster at a point $x \in \Omega$ then
$F$ is weakly $H$-hypodifferentiable at this point.
\end{remark}

\subsection{Abstract quasidifferentiable functions}

It is easy to verify that the following proposition about the directional derivative of an $H$-codifferentiable
function holds true.

\begin{proposition}
Let $X$ be a topological vector space (normed space), a function $F \colon \Omega \to E$ be weakly (strongly)
$H$-codifferentiable at a point $x \in \Omega$. Suppose also that there exists $(\Phi, \Psi) \in \delta F_H[x]$
($(\Phi, \Psi) \in F'_H[x]$) such that the functions $\Phi$ and $\Psi$ are Dini (Hadamard)
directionally differentiable at the origin. Then the function $F$ is Dini (Hadamard) directionally differentiable at
the point $x$ and 
\[
F'(x, g) = \Phi'(0, g) + \Psi'(0, g) \quad \forall g \in X.
\]
Here $F'(x, \cdot)$, $\Phi'(0, \cdot)$ and $\Psi'(0, \cdot)$ are the Dini (Hadamard) directional derivatives of the
functions $F$, $\Phi$ and $\Psi$, respectively.
\end{proposition}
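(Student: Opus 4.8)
The plan is to substitute an admissible increment of the form $\Delta x = \alpha g$ (in the Dini case), or $\Delta x = \alpha g'$ with $\alpha \downarrow 0$ and $g' \to g$ (in the Hadamard case), into the defining equality of $H$-codifferentiability, divide by $\alpha$, and pass to the limit term by term, using the directional differentiability of $\Phi$ and $\Psi$ at the origin together with the decay properties of the remainder $o(\cdot, x)$.

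First I would fix $g \in X$ and record the bookkeeping. Since $\Omega$ is open, $x \in \Omega$, and by definition of $\delta F_H[x]$ there is a neighbourhood $S$ of the origin with $S \subset \dom \Phi \cap \dom \Psi$, there exists $\alpha_0 > 0$ such that $\alpha g$ is an admissible increment (hence $\co\{x,x+\alpha g\} \subset \Omega \cap \dom \Phi \cap \dom \Psi$) for all $\alpha \in (0,\alpha_0]$; in particular $\Phi(\alpha g)$, $\Psi(\alpha g)$, $\Phi(0)$, $\Psi(0)$ all lie in $E$, so no arithmetic with $\pm\infty$ is invoked, and from admissibility of $\alpha_0 g$ one gets $o(\beta g, x)/\beta \to 0$ as $\beta \downarrow 0$. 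Using $\Phi(0)+\Psi(0)=0$ and the weak $H$-codifferentiability of $F$ at $x$, for small $\alpha > 0$
\[
\frac{F(x + \alpha g) - F(x)}{\alpha} = \frac{\Phi(\alpha g) - \Phi(0)}{\alpha} + \frac{\Psi(\alpha g) - \Psi(0)}{\alpha} + \frac{o(\alpha g, x)}{\alpha}.
\]
By hypothesis the first two terms converge to $\Phi'(0,g)$ and $\Psi'(0,g)$ as $\alpha \downarrow 0$, and the last term tends to $0$; hence the left-hand side converges, i.e. $F$ is Dini directionally differentiable at $x$ and $F'(x,g) = \Phi'(0,g) + \Psi'(0,g)$.

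For the Hadamard (strong) case I would repeat the argument with $\Delta x = \alpha g'$, $\alpha \downarrow 0$, $g' \to g$, now using $(\Phi, \Psi) \in F'_H[x]$. The one extra point is control of the remainder: for $\alpha g' \neq 0$ write $o(\alpha g', x)/\alpha = \bigl(o(\alpha g', x)/\|\alpha g'\|\bigr)\,\|g'\|$, and since $\|g'\| \to \|g\|$ is bounded while $\|\alpha g'\| \to 0$, the Fréchet-type estimate $o(\Delta x, x)/\|\Delta x\| \to 0$ as $\Delta x \to 0$ forces this term to $0$ (the degenerate case $g'\to g=0$ being trivial). Combined with the Hadamard directional differentiability of $\Phi$ and $\Psi$ at the origin, which gives $(\Phi(\alpha g') - \Phi(0))/\alpha \to \Phi'(0,g)$ and $(\Psi(\alpha g') - \Psi(0))/\alpha \to \Psi'(0,g)$, the desired formula follows exactly as before.

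The computation is routine; the only points needing care are the bookkeeping guaranteeing that $\alpha g$ (resp. $\alpha g'$) is admissible and lies in $\dom \Phi \cap \dom \Psi$ for small $\alpha$, so that all differences of values are genuine elements of $E$, and, in the strong case, the uniform control of the remainder as $g' \to g$ — which is precisely what distinguishes strong from weak $H$-codifferentiability. No separate obstacle comes from $E$ being a lattice: the limits are ordinary limits in the topological vector space $E$, and the assumed existence of the one-sided limits defining $\Phi'(0,\cdot)$ and $\Psi'(0,\cdot)$ makes their sum well defined.
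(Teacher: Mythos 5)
Your proof is correct. The paper actually gives no proof of this proposition (it is introduced with ``It is easy to verify that\dots''), and your argument --- substituting $\Delta x = \alpha g$ (resp.\ $\alpha g'$ with $g' \to g$), dividing by $\alpha$, using $\Phi(0)+\Psi(0)=0$ and the remainder estimate --- is exactly the routine verification the author intends, including the two points genuinely worth checking: that $o(\beta g,x)/\beta \to 0$ follows from admissibility of some $\alpha_0 g$, and that in the Hadamard case the bound $o(\alpha g',x)/\alpha = \bigl(o(\alpha g',x)/\|\alpha g'\|\bigr)\|g'\|$ with $\|g'\|$ bounded controls the remainder uniformly as $g' \to g$.
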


\begin{corollary} \label{DiniDDofHcodiffFunc}
Let $X$ be a topological vector space, a function $F \colon \Omega \to E$ be weakly $H$-codifferentiable at a point 
$x \in \Omega$. Suppose that any function $h \in H$ is positively homogeneous of degree one (p.h.). Then the function
$F$ is Dini directionally differentiable at the point $x$ and for any $(\Phi, \Psi) \in \delta F_H[x]$ one has
\[
F'(x, g) = \Phi(g) + \Psi(g) \quad \forall g \in X.
\]
\end{corollary}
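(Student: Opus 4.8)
The plan is to obtain this as an immediate consequence of the preceding proposition together with the hypothesis that every $h \in H$ is positively homogeneous of degree one. First I would observe that the positive homogeneity of all members of $H$ is inherited by the generating suprema and infima: if $\Phi(y) = \sup_{h \in U} h(y)$ with $U \subset H$, then for $\alpha > 0$ one has $\Phi(\alpha y) = \sup_{h \in U} h(\alpha y) = \sup_{h \in U} \alpha h(y) = \alpha \Phi(y)$, using that multiplication by the positive scalar $\alpha$ commutes with the supremum in the order complete lattice $E$ (and likewise $-\infty, +\infty$ are handled by the conventions fixed in the Preliminaries); the same computation applies to the infimum defining $\Psi$. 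Hence for any $(\Phi, \Psi) \in \delta F_H[x]$ the functions $\Phi$ and $\Psi$ are positively homogeneous of degree one on $X$.

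Next I would note that a positively homogeneous function $g \colon X \to \overline{E}$ with $g(0) = 0$ is automatically Dini directionally differentiable at the origin, with $g'(0, v) = g(v)$ for every $v \in X$: indeed $\tfrac{1}{\alpha}(g(0 + \alpha v) - g(0)) = \tfrac{1}{\alpha} g(\alpha v) = g(v)$ for all $\alpha > 0$, so the limit as $\alpha \downarrow 0$ is trivially $g(v)$. Here $\Phi(0) + \Psi(0) = 0$ is part of the definition of weak $H$-codifferentiability, and since $0 \in \dom\Phi \cap \dom\Psi$ (the filter condition) both $\Phi(0)$ and $\Psi(0)$ lie in $E$; combined with $\Phi(\alpha \cdot 0) = \Phi(0)$ this forces $\Phi(0) = \Psi(0) = 0$ when $\Phi, \Psi$ are $\mathbb{R}_{>0}$-homogeneous, so the above applies verbatim to $g = \Phi$ and $g = \Psi$.

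With these two observations the hypotheses of the preceding proposition are met for every pair $(\Phi, \Psi) \in \delta F_H[x]$: each such $\Phi$ and $\Psi$ is Dini directionally differentiable at the origin, with directional derivative equal to the function itself evaluated at the direction. The proposition then yields that $F$ is Dini directionally differentiable at $x$ and
\[
F'(x, g) = \Phi'(0, g) + \Psi'(0, g) = \Phi(g) + \Psi(g) \quad \forall g \in X,
\]
which is exactly the claimed identity. I do not anticipate a genuine obstacle here; the only point requiring mild care is the interchange of positive scalar multiplication with $\sup$ and $\inf$ in $\overline{E}$ and the bookkeeping with the improper elements $\pm\infty$ on the complement of $\dom\Phi \cap \dom\Psi$ — but since the directional derivative at $0$ only probes directions $g$ along rays $\alpha g$ that, for $\alpha$ small, remain admissible, one may restrict attention to where $\Phi$ and $\Psi$ are finite, and the argument goes through without difficulty.
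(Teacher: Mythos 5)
Your proposal is correct and follows exactly the route the paper intends: the paper states the corollary without proof as an immediate consequence of the preceding proposition, and your argument supplies precisely the two needed observations (positive homogeneity of every $h \in H$ passes to the generating $\sup$ and $\inf$, hence every $(\Phi,\Psi) \in \delta F_H[x]$ is p.h.\ with $\Phi(0)=\Psi(0)=0$, and a p.h.\ function is Dini directionally differentiable at the origin with derivative equal to itself). The remarks on $\pm\infty$ and on $\dom\Phi\cap\dom\Psi$ being a cone containing a neighbourhood of the origin (hence all of $X$) are exactly the right bookkeeping.
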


\begin{remark}
For more details on Dini and Hadamard directional derivatives see, e.g., \cite{Demyanov, DemRub}.
\end{remark}

The previous corollary motivates us to introduce the definition of $H$-quasidifferentiable (or abstract
quasidifferentiable with respect to $H$) function. Suppose that any function $h \in H$ is p.h. (then any $H$-convex or
$H$-concave function is also p.h., and the equivalence relations $\sigma$, $\sigma_w$ and $\sigma_s$ coincide).

\begin{definition}
A function $F \colon X \to E$ is said to be Dini (Hadamard) $H$-qusidifferentiable at a point $x \in \Omega$ if $F$ is
Dini (Hadamard) directionally differentiable at this point and there exists an element 
$\mathcal{D}_H F(x) \in EPF(X, \overline{E}, H)$ such that for any $(p, q) \in \mathcal{D}_H F(x)$
\[
F'(x, g) = p(g) + q(g) \quad \forall g \in X,
\]
where $F'(x, \cdot)$ is the Dini (Hadamard) directional derivative of $F$ at $x$.
\end{definition}

The element $\mathcal{D}_H F(x)$ from the definition of Dini (Hadamard) $H$-quasidifferentiable function is called a
Dini (Hadamard) $H$-quasidifferential of the function $F$ at the point $x$. It is clear that $\mathcal{D}_H F(x)$ is
uniquely defined.

\begin{definition}
Let a function $F \colon \Omega \to E$ be Dini (Hadamard) $H$-quasidifferentiable at a point $x \in \Omega$, and
suppose that $0 \in H$. The function $F$ is said to be Dini (Hadamard) $H$-subdifferentiable at the point $x$ if there
exists an $H$-convex function $p \colon X \to E$ such that $\mathcal{D}_H F(x) = [p, 0]$. The function $F$ is
said to be Dini (Hadamard) $H$-superdifferentiable at the point $x$ if there exists an $H$-concave function 
$q \colon X \to E$ such that $\mathcal{D}_H F(x) = [0, q]$.
\end{definition}

Note a connection between $H$-quasidifferentiable functions and $H$-codifferentiable functions. It is clear that 
a function $F \colon \Omega \to E$ is weakly $H$-codifferentiable at a point $x \in \Omega$ if and only if $F$ is Dini
$H$-quasidifferentiable at this point. Also, it is easy to see that if $F$ is Dini $H$-quasidifferentiable at a point
$x \in \Omega$, and there exists $(p, q) \in D_H F(x)$ such that $p$ and $q$ are Lipschitz continuous in a neighbourhood
of zero, then $F$ is Hadamard $H$-quasidifferentiable at $x$. The following proposition, which is, partly, a
generalization of theorem 2.1 from \cite{PalRecUrb}, reveals a connection between strongly $H$-codifferentiable
functions and Hadamard $H$-quasidifferentiable functions.

\begin{proposition}
Let $X$ be a normed space, $E$ be an order complete normed lattice, and $F \colon \Omega \to E$ be an arbitrary
function. For the function $F$ to be Hadamard $H$-quasidifferentiable at a point $x \in \Omega$ it is sufficient and,
in the case when $X$ is finite dimensional, necessary that $F$ is strongly $H$-codifferentiable at this point and for
any $(\Phi, \Psi) \in F'_H[x]$ the sum $\Phi + \Psi$ is finite and continuous on $X$.
\end{proposition}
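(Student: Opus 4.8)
The plan is to prove the two implications separately, exploiting the already-established relationship between strong $H$-codifferentiability and Hadamard directional differentiability.

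\emph{Sufficiency.} Suppose $F$ is strongly $H$-codifferentiable at $x$ and that for some (hence every) pair $(\Phi,\Psi)\in F'_H[x]$ the sum $\Phi+\Psi$ is finite and continuous on all of $X$. I would first recall that, by the earlier proposition on directional derivatives of $H$-codifferentiable functions, it suffices to show that $\Phi$ and $\Psi$ are Hadamard directionally differentiable at the origin; then $F$ will be Hadamard directionally differentiable at $x$ with $F'(x,g)=\Phi'(0,g)+\Psi'(0,g)$, and the pair of $H$-convex and $H$-concave functions $g\mapsto\Phi'(0,g)$, $g\mapsto\Psi'(0,g)$ will serve as a Hadamard $H$-quasidifferential $\mathcal D_HF(x)$, once one checks independence of the choice of representative. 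The key point is that a finite convex function which is \emph{continuous} at a point is Hadamard (not merely Dini) directionally differentiable there; the same holds for a finite concave function. Since $\Phi$ is $H$-convex it is in particular convex, and although $\Phi$ itself need not be finite and continuous, the hypothesis on $\Phi+\Psi$ together with finiteness of $\Phi$ and $\Psi$ on a neighbourhood of $0$ forces enough regularity: I would argue that $\Phi$ is the difference $(\Phi+\Psi)-\Psi$ of a continuous function and a concave function finite near $0$, hence upper semicontinuous and convex near $0$, therefore locally bounded above, hence continuous near $0$; symmetrically for $\Psi$. Consequently both are Hadamard directionally differentiable at $0$, and the directional-derivative proposition closes this direction. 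One should also verify that the resulting element $\mathcal D_HF(x)$ is well defined in $EPF(X,\overline E,H)$, i.e. does not depend on the representative $(\Phi,\Psi)$; this follows because $\sigma_s\subset$ (the relation defining $\mathcal D_HF(x)$) once directional derivatives exist.

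\emph{Necessity (finite-dimensional case).} Suppose $\dim X<\infty$ and $F$ is Hadamard $H$-quasidifferentiable at $x$, with Hadamard directional derivative $F'(x,\cdot)=p(\cdot)+q(\cdot)$ for every $(p,q)\in\mathcal D_HF(x)$, where $p$ is $H$-convex, $q$ is $H$-concave, and (since the elements of $H$ are positively homogeneous) $p,q$ are finite positively homogeneous functions on $X$. Here I would first observe that finite convex positively homogeneous functions on a finite-dimensional space are automatically (Lipschitz) continuous — this is where finite dimensionality is essential — so $p$, $q$, and hence $F'(x,\cdot)$ are continuous and finite on $X$. The candidate strong $H$-derivative is then $F'_H[x]:=[p,q]_s$, with $\Phi:=p$, $\Psi:=q$; clearly $\Phi(0)+\Psi(0)=0$ and $\Phi+\Psi=F'(x,\cdot)$ is finite and continuous on $X$, so the regularity clause in the statement holds for this representative. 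It remains to prove the Fréchet-type expansion $F(x+\Delta x)-F(x)=\Phi(\Delta x)+\Psi(\Delta x)+o(\Delta x)$ as $\Delta x\to0$. This is exactly the content of the cited theorem 2.1 from \cite{PalRecUrb} adapted to the abstract-convex setting: Hadamard directional differentiability with continuous, positively homogeneous directional derivative upgrades, in finite dimensions, to a first-order Fréchet-type expansion. I would carry this out by a compactness argument on the unit sphere $S$ of $X$: fix $\varepsilon>0$; for each direction $g\in S$, Hadamard differentiability gives a neighbourhood of $g$ and a threshold $\alpha_g>0$ on which the difference quotient is within $\varepsilon$ of $F'(x,g)$; cover $S$ by finitely many such neighbourhoods (using compactness of $S$) and take the minimum of the thresholds, together with uniform continuity of $F'(x,\cdot)$ on $S$, to obtain a single $r>0$ with $\|F(x+\Delta x)-F(x)-F'(x,\Delta x)\|\le\varepsilon\|\Delta x\|$ for $\|\Delta x\|\le r$. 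Since $\varepsilon$ is arbitrary, this is the desired $o(\Delta x)$ estimate. Finally one checks that $[p,q]_s$ does not depend on the chosen representative of $\mathcal D_HF(x)$, which is immediate since any two representatives agree with $F'(x,\cdot)$ after summation.

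\emph{Main obstacle.} The routine parts are the bookkeeping of equivalence classes and the deduction of continuity of $p,q$ from finite dimensionality. The genuine work is the necessity direction's compactness/uniformity argument turning pointwise Hadamard differentiability into a uniform first-order estimate — this is precisely where compactness of the sphere (hence finite dimensionality) is used and cannot be dispensed with, and it is the step that must be written with care, since Hadamard differentiability in a single direction already builds in joint continuity in $(\alpha,g)$ but one still has to patch the local thresholds into a global radius. In infinite dimensions the sphere is no longer compact and this patching fails, which is exactly why necessity is only claimed in the finite-dimensional case.
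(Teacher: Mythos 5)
The sufficiency direction of your proposal has a genuine gap. You route the argument through Hadamard directional differentiability of $\Phi$ and $\Psi$ \emph{separately} at the origin, which (since every $h\in H$ is positively homogeneous in this section, hence so are $\Phi$ and $\Psi$) amounts to requiring that $\Phi$ and $\Psi$ be individually continuous on $X$. But the hypothesis only gives continuity of the sum $\Phi+\Psi$, and your attempt to upgrade this to continuity of each summand fails twice over. First, in this abstract setting an $H$-convex function is merely a pointwise supremum of elements of $H$ (arbitrary p.h.\ functions with values in $\overline{E}$), so it need not be convex in the classical sense; the appeal to classical convex analysis (continuity of convex functions, Hadamard differentiability of locally Lipschitz convex functions) is therefore unavailable. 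Second, even granting classical convexity, ``convex and finite near $0$'' does not imply ``locally bounded above'', hence does not imply continuity, in an infinite-dimensional normed space (a discontinuous linear functional is a counterexample) --- and sufficiency is claimed for an arbitrary normed $X$. The paper's proof avoids all of this by never separating the summands: by positive homogeneity $\Phi(\alpha_n g_n)+\Psi(\alpha_n g_n)=\alpha_n\big(\Phi(g_n)+\Psi(g_n)\big)$, so the difference quotient of $F$ along $\alpha_n g_n$ is controlled by the strong codifferentiability remainder plus $\|\Phi(g_n)+\Psi(g_n)-\Phi(g)-\Psi(g)\|$, and the latter tends to $0$ using only continuity of the \emph{sum}.

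Your necessity argument is sound in outline and rests on the same essential idea as the paper (compactness of the unit sphere in finite dimensions), only organized as a direct covering-and-patching argument rather than the paper's proof by contradiction, which extracts a convergent subsequence $g_{n_k}\to g^*$ of normalized directions from a sequence violating the Fr\'echet-type expansion. However, the same misstep recurs there: you justify continuity of $F'(x,\cdot)=p+q$ by claiming that $p$ and $q$ are finite convex (concave) p.h.\ functions, hence continuous in finite dimensions; again $p$ need not be convex. The paper instead invokes the general fact that the Hadamard directional derivative of a Hadamard directionally differentiable function is continuous in the direction, which yields continuity of the sum $p+q$ directly --- and that is all that is needed, both for the uniformity argument on the sphere and for the final clause of the statement.
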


\begin{proof}
Sufficiency. Let $(\Phi, \Psi) \in F'_H[x]$, $g \in X$ and sequences $\{ g_n \} \subset X$, 
$\{ \alpha_n \} \subset (0, + \infty)$ such that $g_n \to g$ and $\alpha_n \to 0$ as $n \to \infty$ be arbitrary.
From the facts that $(\Phi, \Psi) \in F'_H[x]$ and the sum $\Phi + \Psi$ is continuous it follows that
\[
\frac{1}{\alpha_n \| g_n \|} \| F(x + \alpha_n g_n) - F(x) - \Phi(\alpha_n g_n) - \Psi(\alpha_n g_n) \| \to 0
\]
and $\| \Phi( g_n ) + \Psi(g_n) - \Phi(g) - \Psi(g) \| \to 0$ as $n \to \infty$. Consequently
\begin{multline*}
\left\| \frac{F(x + \alpha_n g_n) - F(x)}{\alpha_n} - \Phi(g) - \Psi(g) \right\| \le
\| \Phi(g_n) + \Psi(g_n) - \Phi(g) - \Psi(g) \| + \\
+ \| g_n \| \frac{1}{\alpha_n \| g_n \|} \| F(x + \alpha_n g_n) - F(x) - \Phi(\alpha_n g_n) - \Psi(\alpha_n g_n) \|
\to 0
\end{multline*}
as $n \to \infty$. Therefore the function $F$ is Hadamard $H$-quasidifferentiable at the point $x$ and 
$(p, q) \in \mathcal{D}_H F(x)$ if and only if $(p, q) \in F'_H[x]$.

Necessity. Ab absurdo, suppose that $F$ is not strongly $H$-codifferentiable at the point $x$. Fix an arbitrary 
$(p, q) \in \mathcal{D}_H F(x)$. It is clear that there exist $\varepsilon > 0$ and a sequence of admissible argument
increments $\{ \Delta x_n \} \subset X$ such that $\| \Delta x_n \| \to 0$ and for any $n \in \mathbb{N}$
\begin{equation} \label{notHCodiff}
\frac{1}{\| \Delta x_n \|}
\left\| F(x + \Delta x_n) - F(x) - p(\Delta x_n) - q(\Delta x_n) \right\| > \varepsilon.
\end{equation}
Denote $\alpha_n = \| \Delta x_n \|$, $g_n = \Delta x_n / \alpha_n$. Applying the fact that $X$ is finite dimensional
one gets that there exists a subsequence $\{ g_{n_k} \}$ converging to some $g^* \in X$, $\| g^* \| = 1$.

From the fact that $F$ is Hadamard $H$-quasidifferentiable it follows that there exists $k_1 \in \mathbb{N}$ such for
all $k > k_1$ one has
\[
\left\| \frac{F(x + \alpha_{n_k} g_{n_k}) - F(x)}{\alpha_{n_k}} - p(g^*) - q(g^*) \right\| < \frac{\varepsilon}{4}.
\]
It is well-known and easy to check, that the directional derivative $F'(x, g)$ of the Hadamard directionally
differentiable function $F$ is continuous with respect to $g$. Therefore the sum $p + q$ is continuous on $X$. Hence,
there exists $k_2 \in \mathbb{N}$ such that for any $k > k_2$ one has 
$\| p(g_{n_k}) + q(g_{n_k}) - p(g^*) - q(g^*) \| < \varepsilon / 4$. Taking into account the fact that 
$(p, q) \in \mathcal{D}_H F(x)$ one gets that for any $k > \max\{ k_1, k_2 \}$
\begin{multline*}
\frac{1}{\| \Delta x_{n_k} \|}
\left\| F(x + \Delta x_{n_k}) - F(x) - p(\Delta x_{n_k}) - q(\Delta x_{n_k}) \right\| \le \\
\le \left\| \frac{F(x + \alpha_{n_k} g_{n_k}) - F(x)}{\alpha_{n_k}} - p(g^*) - q(g^*) \right\| + \\
+ \left\| p(g_{n_k}) + q(g_{n_k}) - p(g^*) - q(g^*) \right\| \le \frac{\varepsilon}{4}
+ \frac{\varepsilon}{4} = \frac{\varepsilon}{2},
\end{multline*}
which contradicts (\ref{notHCodiff}). Thus, the function $F$ is strongly $H$-codifferentiable at the point $x$ and,
taking into account the fact that the equivalence relations $\sigma$ and $\sigma_s$ coincide in the case when any 
$h \in H$ is p.h., one gets that $(p, q) \in \mathcal{D}_H F(x)$ if and only if $(p, q) \in F'_H[x]$. Furthermore, for
any $(\Phi, \Psi) \in F'_H[x]$ the sum $\Phi + \Psi$ is finite and continuous on $X$, since 
for any $(p, q) \in \mathcal{D}_H F(x)$ the sum $p(\cdot) + q(\cdot) = F'(x, \cdot)$ is finite and continuous on $X$.
\end{proof}

\begin{remark}
It is to be mentioned that the notion of strong $H$-codifferentiability in the case when any function $h \in H$ is
positively homogeneous of degree one is closely related to the notion of semidifferentiability introduced in
\cite{Gianessi}. 
\end{remark}

Let us briefly discuss two well-known examples of $H$-quasidifferentiable functions. Let $X$ be a locally convex
Hausdorff topological vector space over the real field and $E = \mathbb{R}$. It is easy to verify that if $H = X^*$,
then a function $F \colon \Omega \to \mathbb{R}$ is Dini $H$-quasidifferentiable at a point $x \in \Omega$ if and only
if $F$ is quasidifferentiable at this point (see~\cite{DemRub, PalRecUrb, Uderzo1}). 

Suppose now that $H$ consists of all finite l.s.c. positively homogeneous convex functions $h \colon X \to \mathbb{R}$
(or u.s.c. positively homogeneous concave functions $h \colon X \to \mathbb{R}$). Then one can show that a function 
$F \colon \Omega \to \mathbb{R}$ is Dini $H$-superdifferentiable ($H$-subdifferentiable) at a point $x \in \Omega$ if
and only if there exists an upper exhauster (lower exhauster) \cite{Demyanov} of a functions $F$ at this
point. 

\begin{remark}
Note that the notion of quasidifferentiable functions in order complete vector lattices \cite{DemRub} coincide
with the notion of $H$-quasidifferentiable functions for the set $H = \mathcal{B}(X, Y)$. Also, the notion of
quasidifferentiable in the generalized sense functions introduced in \cite{Ishizuka} is the particular case of the
notion of $H$-quasidifferentiable functions, when the set $H$ consists of all finite l.s.c. positively homogeneous
convex and finite u.s.c. positively homogeneous concave functions.
\end{remark}

\subsection{Abstract convex approximations of nonsmooth functions}

In this section we consider the concept of abstract convex approximations of nonsmooth functions, that is closely
related to the notion of $H$-codifferentiability. These approximations are a very convenient tool for studying various
kinds of optimization problems. We will use them to derive necessary conditions for an extremum of 
an $H$-codifferentiable function. The notion of abstract convex approximation is a natural generalization of the
notion of convex approximation (see~\cite{Dolgopolik2} and references therein).

Let, as earlier, $H$ be a nonempty set of functions mapping $X$ to $\overline{E}$, and let $F \colon \Omega \to E$ be an
arbitrary function.

\begin{definition}
An $H$-convex function $\varphi \colon X \to \overline{E}$ is called a weak upper $H$-convex approximation (or
weak upper abstract convex approximation with respect to $H$) of the function $F$ at a point $x \in \Omega$ if
\begin{enumerate}
\item{$\varphi(0) \ge 0$ and $0 \in \interior \dom \varphi$;}

\item{for any $\Delta x \in X$ there exist $\alpha_0 > 0$ and a function $\beta \colon (0, \alpha_0) \to E$ such that 
$\co\{x, x + \alpha_0 \Delta x\} \subset \Omega \cap \dom \varphi$, $\beta(\alpha) \to 0$ as 
$\alpha \downarrow 0$ and
\[
F(x + \alpha \Delta x) - F(x) \le \varphi(\alpha \Delta x) + \alpha \beta(\alpha) \quad
\forall \alpha \in [0, \alpha_0).
\]
}
\end{enumerate}
\end{definition}

\begin{definition}
An $H$-concave function $\psi \colon X \to \overline{E}$ is referred to as a weak lower $H$-concave approximation (or
weak lower abstract concave approximation with respect to $H$) of the function $F$ at a point $x \in \Omega$ if
\begin{enumerate}
\item{$\psi(0) \le 0$ and $0 \in \interior \dom \psi$;}

\item{for any $\Delta x \in X$ there exist $\alpha_0 > 0$ and a function $\beta \colon (0, \alpha_0) \to E$ such that 
$\co\{x, x + \alpha_0 \Delta x\} \subset \Omega \cap \dom \psi$, $\beta(\alpha) \to 0$ as 
$\alpha \downarrow 0$ and
\[
F(x + \alpha \Delta x) - F(x) \ge \psi(\alpha \Delta x) - \alpha \beta(\alpha) \quad \forall \alpha \in [0, \alpha_0).
\]
}
\end{enumerate}
\end{definition}

\begin{definition}
Let $X$ be a normed space. An $H$-convex function $\varphi \colon X \to \overline{E}$ is called a strong upper
$H$-convex approximation of the function $F$ at $x \in \Omega$ if
\begin{enumerate}
\item{$\varphi(0) \ge 0$ and $0 \in \interior \dom \varphi$;}

\item{there exists $r > 0$ and a function $\beta \colon B(0, r) \to E$ such that $\beta(\Delta x) \to 0$ as 
$\Delta x \to 0$ and
\[
F(x + \Delta x) - F(x) \le \varphi(\Delta x) + \| \Delta x \| \beta(\Delta x) \quad \forall \Delta x \in B(0, r).
\]
}
\end{enumerate}
\end{definition}

One can also define a strong lower $H$-concave approximation of the function $F$ at a point $x \in \Omega$.

It is natural to expect that an upper $H$-convex approximation (lower $H$-concave approximation) does not usually
provide enough information about the behaviour of the function $F$ in a neighbourhood of a point $x$. Therefore we have
to use various families of upper $H$-convex (lower $H$-concave) approximations. Families of these approximations that
are of the most importance for the study of optimization problems is called an exhaustive families.

\begin{definition}
A family $\{ \varphi_{\lambda} \}$, $\lambda \in \Lambda$ of weak upper $H$-convex approximations of the function $F$
at a point $x \in \Omega$ is said to be exhaustive if $\inf_{\lambda \in \Lambda} \varphi_{\lambda}(0) = 0$ and 
for any admissible $\Delta x \in X$
\[
F(x + \Delta x) - F(x) = \inf_{\lambda \in \Lambda} \varphi_{\lambda} (\Delta x) + o(\Delta x, x),
\]
where $o(\alpha \Delta x, x) / \alpha \to 0$ as $\alpha \downarrow 0$.
\end{definition}

\begin{definition}
A family $\{ \psi_{\lambda} \}$, $\lambda \in \Lambda$ of weak lower $H$-concave approximations of the function $F$
at a point $x \in \Omega$  is referred to as exhaustive if $\sup_{\lambda \in \Lambda} \psi_{\lambda}(0) = 0$ and for
any $\Delta x \in X$
\[
F(x + \Delta x) - F(x) = \sup_{\lambda \in \Lambda} \psi_{\lambda} (\Delta x) + o(\Delta x, x),
\]
where $o(\alpha \Delta x, x) / \alpha \to 0$ as $\alpha \downarrow 0$.
\end{definition}

\begin{definition}
Let $X$ be a normed space. A family $\{ \varphi_{\lambda} \}$, $\lambda \in \Lambda$ of strong upper $H$-convex
approximations of the function $F$ at a point $x \in \Omega$  is said to be exhaustive if
$\inf_{\lambda \in \Lambda} \varphi_{\lambda}(0) = 0$ and for any $\Delta x \in X$
\[
F(x + \Delta x) - F(x) = \inf_{\lambda \in \Lambda} \varphi_{\lambda} (\Delta x) + o(\Delta x, x),
\]
where $o(\Delta x, x) / \| \Delta x \| \to 0$ as $\Delta x \to 0$. 
\end{definition}

The exhaustive family of strong lower $H$-concave approximations is defined in a similar way.

The following proposition reveals an obvious connection between upper $H$-convex (lower $H$-concave) approximations
and $H$-codifferentials.

\begin{proposition}
Let the set $H$ be closed under addition, and let for any $h \in H$ one has $0 \in \interior \dom h$. Suppose that a
function $F \colon \Omega \to E$ is weakly (strongly) $H$-codifferentiable at a point $x \in \Omega$. 
Then for any $(\Phi, \Psi) \in \delta F_H[x]$ and for all $h \in \supp^- (\Phi, H)$ and  $p \in \supp^+ (\Psi, H)$ the
function $\Phi + p$ is a weak (strong) upper $H$-convex approximation of $F$ at $x$ and the function  $h + \Psi$ is a
weak (strong) lower $H$-concave approximation of $F$ at $x$. Moreover, for any $(\Phi, \Psi) \in \delta F_H[x]$ and for
any $U, V \subset H$ such that $\Phi$ is generated by $U$ and $\Psi$ is generated by $V$ the family $\{ \Phi + p \}$, 
$p \in V$ is an exhaustive family of weak (strong) upper $H$-convex approximations of $F$ at $x$, and the family 
$\{ h + \Psi \}$, $h \in U$ is an exhaustive family of weak (strong) lower $H$-concave approximations of $F$ at $x$.
\end{proposition}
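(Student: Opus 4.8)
The plan is to unwind the definition of $H$-codifferentiability and to reassemble the pieces so as to match the definitions of upper $H$-convex approximation and of an exhaustive family. Fix $(\Phi,\Psi)\in\delta F_H[x]$. Since $\Phi$ is $H$-convex and $\Psi$ is $H$-concave, pick (or use the given) $U,V\subset H$ generating $\Phi$ and $\Psi$ respectively; by the definition of $PF(X,\overline{E},H)$ there is $S\in\mathfrak{F}$ with $S\subset\dom\Phi\cap\dom\Psi$, so $0\in\interior(\dom\Phi\cap\dom\Psi)$, and in particular $0\in\interior\dom(\Phi+p)$ for every $p\in V$ and $0\in\interior\dom(h+\Psi)$ for every $h\in U$ (using $0\in\interior\dom h$ for each $h\in H$). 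Also $\Phi(0)+\Psi(0)=0$.

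The first main step is to show $\Phi+p$ is a weak upper $H$-convex approximation of $F$ at $x$ for each $p\in\supp^+(\Psi,H)$. It is $H$-convex because $H$ is closed under addition and $\Phi+p=\sup_{h\in U}(h+p)$ with each $h+p\in H$. For property~1 I must check $(\Phi+p)(0)\ge 0$: since $p\in\supp^+(\Psi,H)$ we have $p\ge\Psi$, hence $p(0)\ge\Psi(0)$, so $(\Phi+p)(0)\ge\Phi(0)+\Psi(0)=0$. For property~2, take any admissible $\Delta x$, write the codifferentiability identity $F(x+\alpha\Delta x)-F(x)=\Phi(\alpha\Delta x)+\Psi(\alpha\Delta x)+o(\alpha\Delta x,x)$, and use $\Psi(\alpha\Delta x)\le p(\alpha\Delta x)$ to dominate the right-hand side by $(\Phi+p)(\alpha\Delta x)+o(\alpha\Delta x,x)$; setting $\beta(\alpha)=o(\alpha\Delta x,x)/\alpha$ gives $\beta(\alpha)\to 0$ as $\alpha\downarrow 0$, which is exactly what property~2 requires (shrinking $\alpha_0$ so that $\co\{x,x+\alpha_0\Delta x\}\subset\Omega\cap\dom(\Phi+p)$). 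The symmetric argument, using $h\in\supp^-(\Phi,H)$, i.e.\ $h\le\Phi$, shows $h+\Psi$ is a weak lower $H$-concave approximation of $F$ at $x$; the strong case is identical with $\alpha\downarrow 0$ replaced by $\Delta x\to 0$ and the quotient by $\|\Delta x\|$, invoking the strong $H$-derivative instead.

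The second main step is exhaustiveness of $\{\Phi+p\}_{p\in V}$. Since $\Psi=\inf_{p\in V}p$, we get $\inf_{p\in V}(\Phi+p)(y)=\Phi(y)+\Psi(y)$ for every $y\in X$ (here $\Phi(y)$ is a constant pulled out of the infimum; one should note $\Phi(y)$ can be $+\infty$ only outside $\dom\Phi$, and for $y$ near $0$, the relevant regime, it is finite, so no indeterminate form arises). In particular $\inf_{p\in V}(\Phi+p)(0)=\Phi(0)+\Psi(0)=0$, the required normalization. Substituting back into the codifferentiability identity yields, for admissible $\Delta x$,
\[
F(x+\Delta x)-F(x)=\inf_{p\in V}(\Phi+p)(\Delta x)+o(\Delta x,x),
\]
which is precisely the definition of an exhaustive family of weak upper $H$-convex approximations. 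The dual statement for $\{h+\Psi\}_{h\in U}$ follows from $\Phi=\sup_{h\in U}h$ in the same way, and again the strong versions require only replacing the little-$o$ condition accordingly.

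The routine verifications are the $H$-convexity/$H$-concavity bookkeeping (closure of $H$ under addition) and the domain conditions; the only place that deserves a moment of care is the interchange of $\Phi(y)$ with $\inf_{p\in V}$ and the avoidance of $+\infty+(-\infty)$, which is handled by restricting attention to a neighbourhood of the origin contained in $\dom\Phi\cap\dom\Psi$, exactly the set $S\in\mathfrak{F}$ furnished by membership in $PF(X,\overline{E},H)$. I do not anticipate a genuine obstacle here — the proposition is a direct consequence of the definitions — so the write-up is essentially the three substitutions above made precise.
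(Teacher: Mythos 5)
Your proof is correct, and it follows the only natural route: the paper actually states this proposition without proof (introducing it as revealing ``an obvious connection''), and your direct verification --- $p\ge\Psi$ gives the upper bound and $(\Phi+p)(0)\ge 0$, $h\le\Phi$ gives the dual bound, and $\inf_{p\in V}(\Phi+p)=\Phi+\Psi$ near the origin gives exhaustiveness --- is precisely the intended unwinding of the definitions. Your attention to the domain/well-definedness issues (restricting to the neighbourhood $S\subset\dom\Phi\cap\dom\Psi$ and using $0\in\interior\dom h$ for $h\in H$) covers the only points where care is needed.
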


\section{Calculus of abstract codifferentiable functions}\label{CalcOfHCodiff}

In this section we discuss the problem of computing $H$-codifferentials and construct the $H$-codifferential
calculus. We also consider the problem of continuity of $H$-codifferentials, which is very important for practical
applications. We study only the Hausdorff continuity of $H$-codifferentials; however, one could reformulate all
results of this sections to the case of other types of continuity.

In the following propositions and theorems we mostly study weakly $H$-codifferentiable functions but all
results of this section are also valid for strongly $H$-codifferentiable functions.

\begin{remark}
(i) {We do not discuss any formulae for computing $H$-quasidifferentials, upper $H$-convex (lower $H$-concave)
approximations, and exhaustive families of these approximations. One can easily derive them arguing in a similar way to
the cases of exhaustive families of nonhomogeneous convex approximations \cite{Dolgopolik2} and quasidifferentiable
functions \cite{DemRub}.
}

\noindent(ii) {One can consider the main results of this section as sufficient conditions for the set of all
$H$-codifferentiable (or all continuously $H$-codifferentiable) at a given point functions to be a cone, a group under
addition (or multiplication), a linear space, an algebra, a lattice or a vector lattice.
}
\end{remark}

As earlier mentioned, we suppose that $X$ is a Hausdorff topological vector space over the field of real or complex
numbers, $E$ is an order complete Hausdorff topological vector lattice, $H$ is an arbitrary nonempty set of functions
$h \colon X \to \overline{E}$, and $\Omega \subset X$ is an open set. 

It is obvious that if a function $F \colon \Omega \to E$ is weakly $H$-codifferentiable at $x \in \Omega$, then for any
$c \in E$ the function $F + c$ is also weakly $H$-codifferentiable at $x$, $\delta (F + c)_H [x] = \delta F_H[x]$ and
$D^w_H (F + c)(x) = D^w_H F(x)$. It is easy to check that the following propositions hold true.

\begin{proposition}
Let a function $F \colon \Omega \to E$ be weakly $H$-codifferentiable at a point $x \in \Omega$, and let $\alpha \in
\mathbb{R}$ be arbitrary. Suppose also that $H$ is a cone in the case $\alpha \ne 0$, and $0 \in H$ in the case 
$\alpha = 0$. Then the function $\alpha F$ is weakly $H$-codifferentiable at the point $x$,
$\delta (\alpha F)_H[x] = \alpha \delta F_H [x]$ and $D^w_H(\alpha F)(x) = \alpha D^w_H F(x)$ in the case 
$\alpha \ge 0$, and the function $\alpha F$ is weakly $(-H)$-codifferentiable at $x$, 
$\delta (\alpha F)_{(-H)}[x] = \alpha \delta F_H [x]$ and $D^w_{(-H)}(\alpha F)(x) = \alpha D^w_H F(x)$ in the case 
$\alpha < 0$.
\end{proposition}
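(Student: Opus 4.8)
The plan is to unwind the definitions and reduce everything to the already-established scalar-multiplication rule on $EPF_w(X,\overline{E},H)$. Start by fixing a representative $(\Phi,\Psi)\in\delta F_H[x]$, so that $\Phi$ is $H$-convex, $\Psi$ is $H$-concave, $\Phi(0)+\Psi(0)=0$, and for every admissible $\Delta x$ one has $F(x+\Delta x)-F(x)=\Phi(\Delta x)+\Psi(\Delta x)+o(\Delta x,x)$. I would treat the three cases $\alpha>0$, $\alpha=0$, $\alpha<0$ separately, since the definition of $\alpha[\Phi,\Psi]_w$ does.

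For $\alpha>0$: under the hypothesis that $H$ is a cone, $\alpha\Phi$ is again $H$-convex (it is generated by $\{\alpha h : h\in U\}$ whenever $\Phi$ is generated by $U$, using $\sup_h \alpha h(y)=\alpha\sup_h h(y)$ for $\alpha>0$) and $\alpha\Psi$ is $H$-concave; moreover $\dom(\alpha\Phi)=\dom\Phi$, $\dom(\alpha\Psi)=\dom\Psi$, so $(\alpha\Phi,\alpha\Psi)\in PF(X,\overline{E},H)$ and admissible increments for $(\Phi,\Psi)$ are exactly the admissible increments for $(\alpha\Phi,\alpha\Psi)$. Multiplying the codifferentiability identity by $\alpha$ and noting $\alpha\cdot o(\Delta x,x)$ is still an $o(\Delta x,x)$ remainder (since $\alpha>0$ is fixed, $\alpha o(\alpha'\Delta x,x)/\alpha'\to 0$), we get $(\alpha F)(x+\Delta x)-(\alpha F)(x)=\alpha\Phi(\Delta x)+\alpha\Psi(\Delta x)+o(\Delta x,x)$ with $\alpha\Phi(0)+\alpha\Psi(0)=0$. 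Hence $\alpha F$ is weakly $H$-codifferentiable at $x$ with $\delta(\alpha F)_H[x]=[\alpha\Phi,\alpha\Psi]_w=\alpha\delta F_H[x]$, the last equality being the definition of scalar multiplication on $EPF_w$ for positive $\alpha$; passing to the generating sets gives $D^w_H(\alpha F)(x)=[\{\alpha h:h\in U\},\{\alpha p:p\in V\}]_w=\alpha D^w_H F(x)$. The case $\alpha=0$ is immediate: $0\in H$ by hypothesis, $0\cdot F\equiv 0$, and $\delta(0)_H[x]=[0,0]_w=0\cdot\delta F_H[x]$ by definition.

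For $\alpha<0$: here the key point is that $\alpha\Phi$ is no longer $H$-convex but rather $(-H)$-concave, and $\alpha\Psi$ is $(-H)$-convex, because $\sup_{h\in U}\alpha h=\inf_{h\in U}\alpha h$... more precisely, for $\alpha<0$, $\alpha\sup_{h\in U}h(y)=\inf_{h\in U}(\alpha h)(y)$ and each $\alpha h=-|\alpha|h\in -H$ once $H$ is a cone (so $|\alpha|h\in H$, hence $\alpha h\in -H$). Thus $\alpha\Phi$ is $(-H)$-concave, $\alpha\Psi$ is $(-H)$-convex, and therefore $(\alpha\Psi,\alpha\Phi)\in PF(X,\overline{E},-H)$ — note the swap of the two slots, which matches the definition $\alpha[\Phi,\Psi]_w=[\alpha\Psi,\alpha\Phi]_w\in EPF_w(X,\overline{E},-H)$ for $\alpha<0$. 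Multiplying the codifferentiability identity by $\alpha$ and observing $\alpha(\Phi(\Delta x)+\Psi(\Delta x))=\alpha\Psi(\Delta x)+\alpha\Phi(\Delta x)$ while $\alpha\, o(\Delta x,x)$ is still a valid $o$-remainder (multiplication by a fixed nonzero scalar does not affect the limit), we obtain $(\alpha F)(x+\Delta x)-(\alpha F)(x)=\alpha\Psi(\Delta x)+\alpha\Phi(\Delta x)+o(\Delta x,x)$ with $\alpha\Psi(0)+\alpha\Phi(0)=\alpha\cdot 0=0$, so $\alpha F$ is weakly $(-H)$-codifferentiable at $x$ with $\delta(\alpha F)_{(-H)}[x]=[\alpha\Psi,\alpha\Phi]_w=\alpha\delta F_H[x]$, and passing to generators gives $D^w_{(-H)}(\alpha F)(x)=\alpha D^w_H F(x)$.

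The only subtlety worth stating carefully — and the step I'd expect a careful referee to look at — is the claim that $\alpha$ times an $o(\Delta x,x)$ term is again such a term and, conversely, that the admissible-increment condition is unchanged: for $\alpha\neq 0$ we have $\dom(\alpha\Phi)=\dom\Phi$ and $\dom(\alpha\Psi)=\dom\Psi$ (scaling by a nonzero real changes no finiteness), so $\co\{x,x+\Delta x\}\subset\Omega\cap\dom\Phi\cap\dom\Psi$ is literally the same condition in both directions, and $o(\alpha'\alpha\Delta x,x)/\alpha'=\alpha\cdot o(\alpha'\Delta x,x)/\alpha'\to 0$ — here one should also remark that replacing $\Delta x$ by $\alpha\Delta x$ in "admissible" is harmless since admissibility is about the segment $\co\{x,x+\Delta x\}$ and we only need the limit along rays. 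Finally, by the remark following the definition of weak $H$-codifferentiability, the conclusion does not depend on the chosen representative $(\Phi,\Psi)$, so the identities $\delta(\alpha F)_H[x]=\alpha\delta F_H[x]$ (resp. with $-H$) and the codifferential identities hold as stated. Everything else is a direct translation through the definitions of the operations on $EPF_w$ and $EPS_w$, which were already verified to be well-defined in the Preliminaries.
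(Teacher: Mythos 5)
Your proof is correct and is exactly the argument the paper has in mind: the proposition is stated there without proof (prefaced only by ``It is easy to check''), and your case analysis $\alpha>0$, $\alpha=0$, $\alpha<0$, together with the observations that scaling preserves domains, that $\alpha\Phi=\sup_{h\in U}\alpha h$ (resp.\ $\inf_{h\in U}\alpha h$ for $\alpha<0$, forcing the swap into $(-H)$ and the reversal of the two slots), and that a fixed nonzero scalar multiple of an $o(\Delta x,x)$ remainder is again such a remainder, is precisely the intended verification. No gaps.
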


\begin{corollary}
Let all assumptions of the previous proposition be satisfied, and let $(H, d)$ be a metric space. Suppose that the
mapping $h \to \alpha h$ is uniformly continuous on $H$ (if $\alpha < 0$ and $(-H) \nsubseteq H$, then we suppose that 
the set $(-H)$ is equipped with a metric; in particular, one can suppose that $d(-h,-p) = d(h, p)$ for all 
$h, p \in H$). Suppose also that the function $F$ is Hausdorff continuously weakly
$H$-codifferentiable at the point $x$. Then the function $\alpha F$ is Hausdorff continuously weakly
$H$-codifferentiable at the point $x$.
\end{corollary}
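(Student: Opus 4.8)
The plan is to establish Hausdorff continuity of the scaled $H$-codifferential $y \mapsto D^w_H(\alpha F)(y)$ by transporting a Hausdorff continuous selection of $y \mapsto D^w_H F(y)$ through the scalar multiplication operation on $EPS_w(H)$. By the previous proposition, $\alpha F$ is weakly $H$-codifferentiable (for $\alpha \ge 0$) or weakly $(-H)$-codifferentiable (for $\alpha < 0$) in the same neighbourhood $\mathcal{O}$ of $x$ where $F$ is, with $D^w_H(\alpha F)(y) = \alpha D^w_H F(y)$ (respectively $D^w_{(-H)}(\alpha F)(y) = \alpha D^w_H F(y)$) for all $y \in \mathcal{O}$; so only the continuity claim needs proof.

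First I would handle the degenerate case $\alpha = 0$ separately: here $D^w_H(0 \cdot F)(y) = 0 \cdot D^w_H F(y) = [\{0\},\{0\}]_w$ is the constant mapping, which is trivially Hausdorff continuous. For $\alpha \ne 0$ I would invoke the hypothesis that $F$ is Hausdorff continuously weakly $H$-codifferentiable at $x$ to obtain, by the definition of Hausdorff continuity of a mapping into $EPS_w(H)$, a selection $\varphi = (\varphi_1, \varphi_2) \colon \mathcal{O} \to PS(H)$ of $y \mapsto D^w_H F(y)$ such that the set-valued maps $\varphi_1, \varphi_2 \colon \mathcal{O} \to S(H)$ are Hausdorff continuous at $x$. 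Define the candidate selection $\psi$ of $y \mapsto \alpha D^w_H F(y)$ by $\psi(y) = (\alpha \varphi_1(y), \alpha \varphi_2(y))$ when $\alpha > 0$ and $\psi(y) = (\alpha \varphi_2(y), \alpha \varphi_1(y))$ when $\alpha < 0$ (the swap being exactly the one in the definition of $\alpha[\Phi,\Psi]$); here $\alpha A := \{\alpha h \mid h \in A\}$, which lands in $S(H)$ when $\alpha > 0$ and in $S(-H)$ when $\alpha < 0$, using that $H$ is a cone. One checks directly from the definitions of the scalar multiplication on $EPS_w(H)$ that $\psi(y) \in \alpha D^w_H F(y)$ for every $y \in \mathcal{O}$, so $\psi$ is indeed a selection.

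The remaining and only substantive point is that $y \mapsto \alpha \varphi_i(y)$ is Hausdorff continuous at $x$. This follows from the hypothesis that $h \mapsto \alpha h$ is uniformly continuous on $H$ (and the metric on $-H$ when $\alpha<0$), via the elementary fact that the Hausdorff distance is nonexpansive under a common modulus of continuity: if $T \colon (H,d) \to (H',d')$ has modulus of uniform continuity $\omega$, then $d'_{\mathrm{Haus}}(T(A), T(B)) \le \omega(d_{\mathrm{Haus}}(A,B))$ for nonempty $A, B$, since every point of $T(A)$ is within $\omega(d_{\mathrm{Haus}}(A,B))$ of some point of $T(B)$ and symmetrically. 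Composing this estimate with the Hausdorff continuity of $\varphi_i$ at $x$ gives Hausdorff continuity of $\alpha \varphi_i$ at $x$. Hence $\psi_1, \psi_2$ are Hausdorff continuous at $x$, and by the definition of Hausdorff continuity for mappings into $EPS_w(H)$ (resp. $EPS_w(-H)$) the mapping $y \mapsto D^w_H(\alpha F)(y)$ is Hausdorff continuous at $x$, i.e. $\alpha F$ is Hausdorff continuously weakly $H$-codifferentiable (resp. $(-H)$-codifferentiable) at $x$. I expect the only place requiring care is the bookkeeping of which ambient set ($H$ versus $-H$) the scaled sets live in and the corresponding metric, together with stating the Hausdorff-distance/uniform-continuity lemma cleanly; the rest is a direct unwinding of definitions.
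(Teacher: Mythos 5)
Your proof is correct and is exactly the routine unwinding the paper has in mind (the paper omits the argument, introducing the proposition and its corollaries with ``it is easy to check''): scale a Hausdorff continuous selection componentwise, swap the components when $\alpha<0$, and use the uniform continuity of $h\mapsto\alpha h$ to push Hausdorff convergence of $\varphi_i(y)$ to that of $\alpha\varphi_i(y)$. The $\varepsilon$--$\delta$ form of your Hausdorff-distance lemma is the cleanest way to state that last step, and your bookkeeping of $H$ versus $-H$ matches the hypotheses of the corollary.
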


\begin{proposition}
Let functions $F_1, F_2 \colon \Omega \to E$ be weakly $H$-codifferentiable at a point $x \in \Omega$, and let
the set $H$ be closed under addition. Then the function $F_1 + F_2$ is weakly $H$-codifferentiable at the point $x$,
$\delta (F_1 + F_2)_H [x] = \delta (F_1)_H[x] + \delta (F_2)_H[x]$ and 
$D^w_H (F_1 + F_2)(x) = D^w_H F_1(x) + D^w_H F_2(x)$.
\end{proposition}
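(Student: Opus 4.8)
The plan is to construct an explicit representative of the claimed derivative out of representatives of $\delta (F_1)_H[x]$ and $\delta (F_2)_H[x]$, verify that it meets the definition of a weak $H$-derivative, and then identify the resulting equivalence classes using the (already verified) correctness of the operations on $EPF_w(X,\overline{E},H)$ and $EPS_w(H)$.

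First I would fix pairs $(\Phi_1,\Psi_1)\in\delta (F_1)_H[x]$ and $(\Phi_2,\Psi_2)\in\delta (F_2)_H[x]$, each with $\Phi_i(0)+\Psi_i(0)=0$ (any pair works, by the remark after the definition), together with sets $U_i,V_i\subset H$ generating $\Phi_i$ and $\Psi_i$ respectively. Put $\Phi:=\Phi_1+\Phi_2$ and $\Psi:=\Psi_1+\Psi_2$. The key structural step is to check that $(\Phi,\Psi)\in PF(X,\overline{E},H)$. Since $H$ is closed under addition, the sets $U_1+U_2:=\{h_1+h_2\mid h_i\in U_i\}$ and $V_1+V_2$ are nonempty subsets of $H$; using that in the order complete vector lattice $E$ addition is translation invariant, hence distributes over arbitrary suprema and infima, one gets $\Phi=\sup_{h\in U_1+U_2}h$ and $\Psi=\inf_{p\in V_1+V_2}p$, so $\Phi$ is $H$-convex and $\Psi$ is $H$-concave. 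Taking $S$ to be the intersection of neighbourhoods of the origin contained in $\dom\Phi_i\cap\dom\Psi_i$, $i\in\{1,2\}$, and noting that whenever the sums are well-defined one has $\dom\Phi\subset\dom\Phi_1\cap\dom\Phi_2$ and $\dom\Psi\subset\dom\Psi_1\cap\dom\Psi_2$, we obtain $S\subset\dom\Phi\cap\dom\Psi$ and well-definedness of $\Phi+\Psi$ near $0$; moreover $\Phi(0)+\Psi(0)=(\Phi_1(0)+\Psi_1(0))+(\Phi_2(0)+\Psi_2(0))=0$.

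Next I would add the two expansions. Any increment $\Delta x$ admissible for the pair $(\Phi,\Psi)$, i.e. with $\co\{x,x+\Delta x\}\subset\Omega\cap\dom\Phi\cap\dom\Psi$, is admissible for each $(\Phi_i,\Psi_i)$ by the domain inclusions just mentioned, so summing $F_i(x+\Delta x)-F_i(x)=\Phi_i(\Delta x)+\Psi_i(\Delta x)+o_i(\Delta x,x)$ gives $(F_1+F_2)(x+\Delta x)-(F_1+F_2)(x)=\Phi(\Delta x)+\Psi(\Delta x)+o(\Delta x,x)$ with $o:=o_1+o_2$, and $o(\alpha\Delta x,x)/\alpha=o_1(\alpha\Delta x,x)/\alpha+o_2(\alpha\Delta x,x)/\alpha\to 0$ as $\alpha\downarrow 0$ because addition in $E$ is continuous. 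Hence $F_1+F_2$ is weakly $H$-codifferentiable at $x$ with $\delta(F_1+F_2)_H[x]=[\Phi,\Psi]_w=[\Phi_1+\Phi_2,\Psi_1+\Psi_2]_w$, which by the definition of addition on $EPF_w(X,\overline{E},H)$ equals $[\Phi_1,\Psi_1]_w+[\Phi_2,\Psi_2]_w=\delta(F_1)_H[x]+\delta(F_2)_H[x]$. The codifferential identity follows the same way: $\Phi$ is generated by $U_1+U_2$ and $\Psi$ by $V_1+V_2$, so $D^w_H(F_1+F_2)(x)=[U_1+U_2,V_1+V_2]_w=[U_1,V_1]_w+[U_2,V_2]_w=D^w_H F_1(x)+D^w_H F_2(x)$.

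The only genuinely delicate point is the first step: that the sum of two $H$-convex functions is again $H$-convex. This is where the hypothesis that $H$ is closed under addition is used, together with the distributivity of vector-lattice addition over suprema and infima, and it requires some care with the improper elements $\pm\infty$ — one must confirm that well-definedness of $\Phi_1+\Phi_2$ on a neighbourhood of $0$ propagates from the assumption $S_i\subset\dom\Phi_i\cap\dom\Psi_i$, and that $\dom\Phi\subset\dom\Phi_1\cap\dom\Phi_2$. Once this bookkeeping is settled, the rest is a routine addition of the two defining expansions together with an appeal to the already-verified correctness of the operations on $EPF_w$ and $EPS_w$. Everything transfers verbatim to the strong case (relation $\sigma_s$) by replacing $o(\alpha\Delta x,x)/\alpha$ with $o(\Delta x,x)/\|\Delta x\|$ throughout.
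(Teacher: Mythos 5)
The paper offers no proof of this proposition (it is dismissed as ``easy to check''), and your argument is exactly the intended verification: form $\Phi_1+\Phi_2$, $\Psi_1+\Psi_2$, establish $H$-convexity and $H$-concavity via the generating sets $U_1+U_2$, $V_1+V_2$ and the closedness of $H$ under addition, sum the two expansions, and appeal to the already-verified correctness of addition on $EPF_w(X,\overline{E},H)$ and $EPS_w(H)$. One cosmetic remark: to conclude $S\subset\dom\Phi\cap\dom\Psi$ you actually need the inclusion $\dom\Phi_1\cap\dom\Phi_2\subset\dom(\Phi_1+\Phi_2)$ (a sum of finite elements is finite), which is the reverse of the one you quote --- both directions hold once well-definedness is granted (the one you state is the one needed for the admissibility transfer), so nothing breaks.
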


\begin{corollary}
Let all assumption of the previous proposition be satisfied, and let $(H, d)$ be a metric space. Suppose that 
the mapping $(h, p) \to h + p$ is uniformly continuous on $H \times H$; in particular, one can suppose that there
exists $C > 0$ such that 
\[
d( h_1 + h_2, p_1 + p_2 ) \le C(d(h_1, p_1) + d(h_2, p_2)) \quad \forall h_1, h_2, p_1, p_2 \in H.
\]
Suppose also that the functions $F_1$ and $F_2$ are Hausdorff continuously weakly $H$-codifferentiable at the point
$x$. Then the function $F_1 + F_2$ is Hausdorff continuously weakly $H$-codifferentiable at $x$.
\end{corollary}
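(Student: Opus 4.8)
The plan is to combine the algebraic identity $D^w_H(F_1+F_2)(x) = D^w_H F_1(x) + D^w_H F_2(x)$ from the preceding proposition with the continuity hypothesis on the addition map $(h,p)\mapsto h+p$. First I would invoke the hypothesis that $F_1$ and $F_2$ are Hausdorff continuously weakly $H$-codifferentiable at $x$: by definition this yields a neighbourhood $\mathcal{O}$ of $x$ on which both are weakly $H$-codifferentiable, together with selections $\varphi^i = (\varphi^i_1, \varphi^i_2)\colon \mathcal{O} \to PS(H)$ with $\varphi^i(y) \in D^w_H F_i(y)$ for all $y \in \mathcal{O}$, and such that the set-valued maps $\varphi^i_1, \varphi^i_2$ are Hausdorff continuous at $x$. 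Then the natural candidate selection for $F_1+F_2$ is $\psi(y) = (\varphi^1_1(y) + \varphi^2_1(y),\ \varphi^1_2(y) + \varphi^2_2(y))$, where the sum of subsets of $H$ is taken pointwise (this is the sum used in defining addition on $EPS(H)$). By the sum formula for $H$-codifferentials, $\psi(y) \in D^w_H F_1(y) + D^w_H F_2(y) = D^w_H(F_1+F_2)(y)$ for all $y \in \mathcal{O}$, so $\psi$ is indeed a selection of $y \mapsto D^w_H(F_1+F_2)(y)$.

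It then remains to show that the set-valued maps $y \mapsto \varphi^1_1(y) + \varphi^2_1(y)$ and $y \mapsto \varphi^1_2(y) + \varphi^2_2(y)$ are Hausdorff continuous at $x$. This is where the uniform continuity hypothesis on $(h,p)\mapsto h+p$ enters. The key step is the estimate, for the Hausdorff distance $d_H$ induced by $d$ on nonempty subsets of $H$:
\[
d_H\big(A_1 + B_1,\ A_2 + B_2\big) \le \omega\big(d_H(A_1,A_2) + d_H(B_1,B_2)\big),
\]
where $\omega$ is a modulus of uniform continuity of the addition map; under the stronger Lipschitz-type hypothesis one gets the cleaner bound $\le C\,(d_H(A_1,A_2) + d_H(B_1,B_2))$. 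I would prove this by a direct Hausdorff-distance chase: given $a_1 + b_1 \in A_1 + B_1$, pick $a_2 \in A_2$, $b_2 \in B_2$ nearly realizing the distances $d_H(A_1,A_2)$ and $d_H(B_1,B_2)$, and apply uniform continuity of addition to $(a_1,b_1)$ and $(a_2,b_2)$; then symmetrize. Applying this with $A_i = \varphi^i_1(y)$ and $B_i = \varphi^i_2(y)$ (resp. the second components), and using that each $\varphi^i_j$ is Hausdorff continuous at $x$, forces $d_H$ of the summed maps to tend to $0$ as $y \to x$, which is exactly Hausdorff continuity at $x$. Hence $\psi$ is a selection whose components are Hausdorff continuous at $x$, so $F_1+F_2$ is Hausdorff continuously weakly $H$-codifferentiable at $x$.

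The main obstacle I anticipate is purely bookkeeping rather than conceptual: one must check that the pointwise sums $A_i + B_i$ of the chosen subsets of $H$ again lie in $H$ (this is guaranteed since $H$ is closed under addition) and that the well-definedness conditions on $PS(H)$ — i.e. that the associated $H$-convex/$H$-concave functions have a common domain containing a neighbourhood of the origin — are preserved, which follows from the corresponding property for $F_1$ and $F_2$ already established in the underlying proposition. One should also note that the Hausdorff-distance estimate above is exactly what makes the construction work even when $d_H$ is only a pseudometric (if $H$ is not complete the suprema defining $d_H$ could be infinite, but they are finite here because the selections take bounded-distance values near $x$ by Hausdorff continuity). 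Everything else is a routine application of the definitions of Hausdorff continuity of set-valued maps.
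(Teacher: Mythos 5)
Your proposal is correct and is exactly the argument the paper leaves to the reader: take Hausdorff continuous selections $(\varphi^1_1,\varphi^1_2)$, $(\varphi^2_1,\varphi^2_2)$ of $D^w_H F_1$, $D^w_H F_2$ near $x$, observe that the componentwise Minkowski sums give a selection of $D^w_H(F_1+F_2)$ by the sum proposition, and propagate Hausdorff continuity through the Minkowski sum via the uniform continuity of $(h,p)\mapsto h+p$. The only cosmetic remark is that your aside about completeness of $H$ is beside the point (finiteness of the Hausdorff distance is a boundedness issue, and in any case only smallness of the distances near $x$ is needed); the substance of the proof is right.
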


Let us study the problem of finding the $H$-codifferential of the superposition of functions.

\begin{theorem} \label{ThSuperpWithGateauxDiff}
Let $X$, $Y$ be arbitrary normed spaces, $E$ be an order complete normed lattice. Suppose that the following conditions
are satisfied:
\begin{enumerate}

\item{a function $F \colon \Omega \to E$ is strongly $H$-codifferentiable at a point $x \in \Omega$;}

\item{$F'_H[x]$ is Lipschitz continuous in a neighbourhood of zero.}

\item{$S \subset Y$ is an open set, $y \in S$ is arbitrary;}

\item{a function $G \colon S \to X$ is continuous and G\^ateaux differentiable at the point $y$, and $G(y) = x$.}

\end{enumerate}

Then there exists an open set $\mathcal{O} \subset S$ such that $y \in \mathcal{O}$, the function $T = F \circ G$ is
defined on $\mathcal{O}$ and  weakly $\widehat{H}$-codifferentiable at the point $y$, where
\[
\widehat{H} = \{ \hat{h} \colon Y \to \mathbb{R} \mid \hat{h} = h \circ \delta G[y], h \in H \}
\]
and $\delta G[y]$ is the G\^ateaux derivative of the function $G$ at the point $y$. Moreover, for any 
$(\Phi, \Psi) \in F'_H [x]$ and $(U, V) \in D^s_H F(x)$ one has
\begin{gather}
\delta T_{\widehat{H}} [y] = [ \Phi \circ \delta G[y], \Psi \circ \delta G[y] ]_w, \label{composHDeriv} \\
D^w_{\widehat{H}} T(y) = \left[ \left\{ \hat{h} = h \circ \delta G[y] \in \widehat{H} \bigm| h \in U \right\},
\left\{ \hat{p} = p \circ \delta G[y] \in \widehat{H} \bigm| p \in V \right\} \right]_w \label{composHCodiff}.
\end{gather}
\end{theorem}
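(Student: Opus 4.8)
The plan is to show that, for every representative $(\Phi,\Psi)\in F'_H[x]$, the pair $(\Phi\circ\delta G[y],\Psi\circ\delta G[y])$ is a valid representative of a weak $\widehat H$-derivative of $T=F\circ G$ at $y$; the substance of the argument is controlling the Fr\'echet remainder of $F$ along the path $\alpha\mapsto G(y+\alpha\Delta y)$, which is nonlinear in $\alpha$.

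First I would fix a convenient representative. By hypothesis~2 there is a pair $(\Phi_0,\Psi_0)\in F'_H[x]$ with $\Phi_0(0)+\Psi_0(0)=0$ such that $\Phi_0,\Psi_0$ are Lipschitz with a common constant $L$ on a ball $B(0,\rho)\subset\dom\Phi_0\cap\dom\Psi_0$. Write $A=\delta G[y]\in\mathcal B(Y,X)$ for the G\^ateaux derivative and set $\mathcal O=S\cap G^{-1}(\Omega)$; since $G$ is continuous and $\Omega,S$ are open, $\mathcal O$ is open, $y\in\mathcal O$, and $T$ is defined on $\mathcal O$. If $\Phi_0=\sup_{h\in U}h$ and $\Psi_0=\inf_{p\in V}p$ with $U,V\subset H$, then $\Phi_0\circ A=\sup_{h\in U}(h\circ A)$ and $\Psi_0\circ A=\inf_{p\in V}(p\circ A)$, so $\Phi_0\circ A$ is $\widehat H$-convex and $\Psi_0\circ A$ is $\widehat H$-concave; moreover $(\Phi_0\circ A)(0)+(\Psi_0\circ A)(0)=0$, and since $A$ is continuous, $A^{-1}(B(0,\rho))$ is a neighbourhood of the origin in $Y$ contained in $\dom(\Phi_0\circ A)\cap\dom(\Psi_0\circ A)$, so $(\Phi_0\circ A,\Psi_0\circ A)\in PF(Y,\overline E,\widehat H)$.

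Next comes the core estimate. Fix an admissible increment $\Delta y\in Y$ and, for $\alpha>0$ small, put $\Delta x_\alpha=G(y+\alpha\Delta y)-x$. Continuity of $G$ gives $\Delta x_\alpha\to0$, while G\^ateaux differentiability gives $\Delta x_\alpha=\alpha A\Delta y+\alpha r(\alpha)$ with $\|r(\alpha)\|\to0$ as $\alpha\downarrow0$; in particular $\|\Delta x_\alpha\|\le C\alpha$ for $\alpha$ small. For such $\alpha$, $\Delta x_\alpha$ is an admissible increment for $F$ at $x$ (the endpoints of $\co\{x,x+\Delta x_\alpha\}$ lie near $x$, $\Omega$ is open, and $\Delta x_\alpha\in B(0,\rho)$), so strong $H$-codifferentiability yields
\[
T(y+\alpha\Delta y)-T(y)=F(x+\Delta x_\alpha)-F(x)=\Phi_0(\Delta x_\alpha)+\Psi_0(\Delta x_\alpha)+o_F(\Delta x_\alpha,x),
\]
with $\|o_F(\Delta x,x)\|/\|\Delta x\|\to0$ as $\Delta x\to0$. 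I would then estimate the three terms separately: $\|o_F(\Delta x_\alpha,x)\|/\alpha=\big(\|o_F(\Delta x_\alpha,x)\|/\|\Delta x_\alpha\|\big)\cdot\big(\|\Delta x_\alpha\|/\alpha\big)\to0$; and, by the Lipschitz property, $\|\Phi_0(\Delta x_\alpha)-(\Phi_0\circ A)(\alpha\Delta y)\|\le L\|\Delta x_\alpha-\alpha A\Delta y\|=L\alpha\|r(\alpha)\|$, and likewise for $\Psi_0$. Collecting these gives $T(y+\alpha\Delta y)-T(y)=(\Phi_0\circ A)(\alpha\Delta y)+(\Psi_0\circ A)(\alpha\Delta y)+\alpha\beta(\alpha)$ with $\beta(\alpha)\to0$; since $\Delta y$ was arbitrary, $T$ is weakly $\widehat H$-codifferentiable at $y$ with $\delta T_{\widehat H}[y]=[\Phi_0\circ A,\Psi_0\circ A]_w$. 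This is the step I expect to be the main obstacle: it is precisely here that one needs $F$ to be strongly (Fr\'echet) rather than merely weakly codifferentiable, so that $o_F$ is controlled along the curved path $\alpha\mapsto x+\alpha A\Delta y+\alpha r(\alpha)$ and not only along rays, together with the Lipschitz continuity of $\Phi_0,\Psi_0$ near the origin.

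Finally I would pass to an arbitrary representative and read off the formulas. Given any $(\Phi,\Psi)\in F'_H[x]$ we have $((\Phi,\Psi),(\Phi_0,\Psi_0))\in\sigma_s$, and since $A$ is bounded linear one checks directly that $((\Phi\circ A,\Psi\circ A),(\Phi_0\circ A,\Psi_0\circ A))\in\sigma_w$: the values at $0$ coincide, and for each $z\in Y$, writing $w=Az$, $\tfrac1\alpha\big((\Phi+\Psi)(\alpha w)-(\Phi_0+\Psi_0)(\alpha w)\big)=\|w\|\cdot\tfrac{(\Phi+\Psi)(\alpha w)-(\Phi_0+\Psi_0)(\alpha w)}{\|\alpha w\|}\to0$ by the $\sigma_s$-condition (the left side being $0$ when $w=0$). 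Hence $(\Phi\circ A,\Psi\circ A)\in\delta T_{\widehat H}[y]$, which is (\ref{composHDeriv}). For (\ref{composHCodiff}), given $(U,V)\in D^s_HF(x)$ the functions $\sup_{h\in U}h$ and $\inf_{p\in V}p$ form a pair in $F'_H[x]$; composing on the right with $A$ and using $\sup_{h\in U}(h\circ A)=\big(\sup_{h\in U}h\big)\circ A$ and the analogous identity for the infimum, the sets $\{h\circ A\mid h\in U\}$ and $\{p\circ A\mid p\in V\}$ generate $\Phi\circ A$ and $\Psi\circ A$ for that pair, and therefore represent the (unique) weak $\widehat H$-codifferential $D^w_{\widehat H}T(y)$.
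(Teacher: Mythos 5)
Your proposal is correct and follows essentially the same route as the paper's proof: the same decomposition $G(y+\alpha\Delta y)-x=\alpha\,\delta G[y]\Delta y+\alpha r(\alpha)$, the same use of strong (Fr\'echet) codifferentiability to control the remainder $o_F$ along the curved path, and the same appeal to the Lipschitz continuity of $\Phi_0,\Psi_0$ near the origin to replace $\Phi_0(\Delta x_\alpha)+\Psi_0(\Delta x_\alpha)$ by $(\Phi_0\circ\delta G[y])(\alpha\Delta y)+(\Psi_0\circ\delta G[y])(\alpha\Delta y)$ up to an $o(\alpha)$ term. You additionally spell out two points the paper leaves implicit (the $\widehat H$-convexity/concavity of the composed pair and the independence of the formulas from the chosen representative via the $\sigma_s\Rightarrow\sigma_w$ argument), which is welcome detail but not a different method.
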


\begin{proof}
Note that the right-hand sides of equalities (\ref{composHDeriv}) and (\ref{composHCodiff}) do not depend on the choice
of $(\Phi, \Psi) \in F'_H [x]$ and $(U, V) \in D^s_H F(x)$. Hence these formulae are correct.

From the facts that the function $G$ is continuous at the point $y$ and the set $\Omega$ is open it follows that there
exists $\mu > 0$ such that $\mathcal{O}(y, \mu) \subset S$ and $G( \mathcal{O}(y, \mu) ) \subset \Omega$. Denote 
$\mathcal{O} = \mathcal{O}(y, \mu)$. It is clear that the composition $F \circ G$ is defined at least on $\mathcal{O}$.

Fix an arbitrary $\Delta y \in \mathcal{O}(0, \mu)$. The function $G$ is G\^ateaux differentiable at the point $y$
hence 
\[
G(y + \Delta y) = G(y) + \delta G[y](\Delta y) + o_G(\Delta y),
\]
where $o_G(\alpha \Delta y) / \alpha \to 0$ as $\alpha \downarrow 0$. Denote 
$\omega(\Delta y) = \delta G[y](\Delta y) + o_G(\Delta y)$. It is obvious that there exists $\alpha_0 \in (0, 1)$ such
that for all $\alpha \in (0, \alpha_0)$
\[
\| \omega(\alpha \Delta y) \| \le \alpha (\| \delta G[y] \| + 1) \| \Delta y \|.
\]
In particular, one has that $\| \omega(\alpha \Delta y) \| \to 0$ as $\alpha \downarrow 0$.

Fix an arbitrary $(\Phi, \Psi) \in F'_H[x]$ such that the functions $\Phi(\cdot)$ and $\Psi(\cdot)$ are
Lipschitz continuous in a neighbourhood of zero (such $\Phi$ and $\Psi$ exist, since $F'_H[x]$ is Lipschitz
continuous in a neighbourhood of zero). The function $F$ is strongly $H$-codifferentiable at the point $x = G(y)$,
therefore for any admissible $\Delta x \in X$ one has
\[
F(x + \Delta x) - F(x) = \Phi(\Delta x) + \Psi(\Delta x) + o_F(\Delta x),
\]
where $\| o_F(\Delta x) \| / \| \Delta x \| \to 0$ as $\Delta x \to 0$ or, equivalently, 
$\| o_F(\Delta x) \| = \beta( \Delta x ) \| \Delta x \|$, where $\beta( \Delta x ) \to 0$ as $\Delta x \to 0$. Thus, one
gets
\begin{multline*}
T(y + \Delta y) - T(y) = F( G(y + \Delta y) ) - F(G(y)) = F( G(y) + \omega(\Delta y) ) - F(G(y)) = \\
= \Phi( \omega(\Delta y) ) + \Psi( \omega( \Delta y) ) + o_F( \omega( \Delta y ) ).
\end{multline*}
For any $\alpha \in (0, \alpha_0 )$ one has
\[
\| o_F( \omega( \alpha \Delta y ) ) \| \le \alpha \beta( \omega( \alpha \Delta y ) ) (\| G'[y] \| + 1)\| \Delta y \|.
\]
Observe that $\beta( \omega( \alpha \Delta y ) ) \to 0$ as $\alpha \downarrow 0$, since $\beta( \Delta x ) \to 0$ as
$\Delta x \to 0$ and $\| \omega(\alpha \Delta y) \| \to 0$ as $\alpha \downarrow 0$. Therefore
$ \| o_F( \omega( \alpha \Delta y ) ) \| / \alpha \to 0$ as $\alpha \downarrow 0$. It remains to note that from the
Lipschitz continuity of $\Phi(\cdot)$ and $\Psi(\cdot)$ in a neighbourhood of zero it follows that
\[
\Phi( \omega( \Delta y ) ) + \Psi( \omega( \Delta y ) ) = 
(\Phi \circ \delta G[y])(\Delta y) ) + (\Psi \circ \delta G[y])(\Delta y) ) + o( \Delta y ),
\]
where $o(\alpha \Delta y, y) / \alpha \to 0$ as $\alpha \downarrow 0$.
\end{proof}

Let us recall some definitions from lattice theory (see~\cite{Birkhoff, Schaefer, MeyerNieberg}). Let $Y$ be an order
complete vector lattice. Denote by $L(E, Y)_+$ the set of all positive linear operators mapping $E$ to $Y$. 
A linear operator $T \colon E \to Y$ is said to be a complete lattice homomorphism, if for any bounded from above set $A
\subset E$ one has $T \sup_{x \in A} x = \sup_{x \in A} T x$, and for any bounded from below set $B \subset E$ one has
$T \inf_{x \in B} x = \inf_{x \in B} T x$. It is clear that any complete lattice homomorphism $T$ is a positive
operator.

A linear operator $T \colon E \to Y$ is said to be completely regular, if there exist complete lattice homomorphisms 
$S, R \colon E \to Y$ such that $T = S - R$. One can verify that the representation $T = S - R$ of the completely
regular operator $T$ as the difference of two complete lattice homomorphisms is not unique. It is easy to see that any
linear mapping $T \colon \mathbb{R}^m \to \mathbb{R}^n$, where $\mathbb{R}^m$ and $\mathbb{R}^n$ are endowed with
the canonical order relations, is completely regular. 

\begin{theorem}
Let $X$ be a normed space, $E$ and $Y$ be order complete normed lattices. Suppose
that the following conditions are satisfied:
\begin{enumerate}

\item{the set $H$ is closed under addition and for any $h \in H$ one has $-h \in H$;}

\item{a function $F \colon \Omega \to E$ is weakly $H$-codifferentiable at a point $x$;}

\item{$\delta F_H[x]$ is Lipschitz continuous in a neighbourhood of zero;}

\item{$\Sigma \subset E$ is an open set such that $F(x) \in \Sigma$;}

\item{a function $G \colon \Sigma \to Y$ is Fr\'echet differentiable at a point $F(x)$;}

\item{the Fr\'echet derivative $G'[F(x)]$ of $G$ at $F(x)$ is a completely regular linear mapping;}

\item{the function $T = G \circ F$ is defined on an open set $\mathcal{O} \subset \Omega$ (in particular, one can
suppose that $\Sigma = E$ or that $F$ is continuous at $x$).}

\end{enumerate}
Then $T$ is weakly $\widehat{H}$-codifferentiable at a point $x$, where 
$\widehat{H} = \{ S \circ h \mid h \in H, S \in L(E, Y)_+ \}$. Futhermore, for all complete lattice homomorphisms 
$S, R \colon E \to Y$ such that $G'[F(x)] = S - R$, and for any $(\Phi, \Psi) \in \delta F_H[x]$ and 
$(U, V) \in D^w_H F(x)$ one has
\begin{gather} \label{ComposWithComplRegMap1}
\delta T_{\widehat{H}} [x] = [ S \circ \Phi - R \circ \Psi, S \circ \Psi - R \circ \Phi ]_w, \\
D^w_{\widehat{H}} T(x) = [ \{ S \circ h - R \circ p \mid h \in U, p \in V \},
\{ S \circ p - R \circ h \mid h \in U, p \in V \}]. \label{ComposWithComplRegMap2}
\end{gather}
\end{theorem}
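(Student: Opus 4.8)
The plan is to substitute the weak $H$-codifferential expansion of $F$ at $x$ into the Fr\'echet expansion of $G$ at $F(x)$ and to collect terms. Fix complete lattice homomorphisms $S, R \colon E \to Y$ with $G'[F(x)] = S - R$, a pair $(\Phi, \Psi) \in \delta F_H[x]$ whose components are Lipschitz continuous in a neighbourhood of zero (such a pair exists by hypothesis~(3)), and sets $U, V \subset H$ with $\Phi = \sup_{h \in U} h$, $\Psi = \inf_{p \in V} p$. Since $\Sigma$ is open and $F(x) \in \Sigma$, for every increment $\Delta x$ the point $F(x + \alpha \Delta x)$ lies in $\Sigma$ for all small $\alpha > 0$; writing $e(\Delta x) := F(x + \Delta x) - F(x) = \Phi(\Delta x) + \Psi(\Delta x) + o_F(\Delta x)$, the Fr\'echet expansion of $G$ gives $T(x+\Delta x) - T(x) = (S-R)e(\Delta x) + o_G(e(\Delta x))$, and regrouping by linearity of $S$ and $R$,
\[
T(x + \Delta x) - T(x) = \bigl( S \circ \Phi - R \circ \Psi \bigr)(\Delta x) + \bigl( S \circ \Psi - R \circ \Phi \bigr)(\Delta x) + o_T(\Delta x),
\]
where $o_T(\Delta x) = (S - R) o_F(\Delta x) + o_G(e(\Delta x))$. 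Put $\Phi' := S \circ \Phi - R \circ \Psi$ and $\Psi' := S \circ \Psi - R \circ \Phi$.

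The first and main step is the remainder estimate $o_T(\alpha \Delta x)/\alpha \to 0$ as $\alpha \downarrow 0$. For the summand $(S - R)o_F(\alpha \Delta x)$ this is immediate, since $S - R = G'[F(x)]$ is bounded and $\| o_F(\alpha \Delta x) \|/\alpha \to 0$. The summand $o_G(e(\alpha \Delta x))$ is the delicate one, and it is here that hypothesis~(3) enters: by Proposition~\ref{LipschitzContinuity} there is $L > 0$ with $\| e(\alpha \Delta x) \| = \| F(x + \alpha \Delta x) - F(x) \| \le L \alpha \| \Delta x \|$ for all small $\alpha$, so writing $\| o_G(e) \| = \gamma(e)\| e \|$ with $\gamma(e) \to 0$ as $e \to 0$ we get $\| o_G(e(\alpha \Delta x)) \|/\alpha \le \gamma(e(\alpha \Delta x)) L \| \Delta x \| \to 0$, because $\| e(\alpha \Delta x) \| \to 0$. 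I expect this to be the principal obstacle: $o_G$ is evaluated along the curve $\alpha \mapsto e(\alpha \Delta x)$, which need not be $O(\alpha)$ unless $\delta F_H[x]$ is Lipschitz near zero, which is precisely why this hypothesis is imposed.

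Next I would verify the structural properties of $\Phi'$ and $\Psi'$. Since $S$ and $R$ are complete lattice homomorphisms they commute with suprema and infima, so $S \circ \Phi = \sup_{h \in U} S \circ h$, $R \circ \Psi = \inf_{p \in V} R \circ p$, hence
\[
\Phi' = \sup_{h \in U} S \circ h + \sup_{p \in V} R \circ (-p) = \sup_{(h, p) \in U \times V} \bigl( S \circ h - R \circ p \bigr),
\]
and dually $\Psi' = \inf_{(h,p) \in U \times V}(S \circ p - R \circ h)$. Using that $H$ is closed under addition, contains $-h$ for each $h \in H$, and that $S, R \in L(E, Y)_+$, one checks that $\widehat{H}$ is closed under addition and that every $S \circ h - R \circ p$ lies in $\widehat{H}$ — this is the one slightly delicate membership point — so $\Phi'$ is $\widehat{H}$-convex, $\Psi'$ is $\widehat{H}$-concave, generated by the two families displayed in \eqref{ComposWithComplRegMap2}. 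Moreover $S, R$ map $E$ into $Y$ and $\pm\infty$ to $\pm\infty$, so $\dom \Phi' \supseteq \dom \Phi \cap \dom \Psi$ and likewise for $\Psi'$, whence $0 \in \interior(\dom \Phi' \cap \dom \Psi')$; and $\Phi'(0) + \Psi'(0) = (S - R)(\Phi(0) + \Psi(0)) = 0$. Thus $(\Phi', \Psi') \in PF(X, \overline{Y}, \widehat{H})$, and with the expansion above this shows $T$ is weakly $\widehat{H}$-codifferentiable at $x$ with $\delta T_{\widehat{H}}[x] = [\Phi', \Psi']_w$, which is \eqref{ComposWithComplRegMap1}.

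Finally, as in Theorem~\ref{ThSuperpWithGateauxDiff}, one checks that the right-hand sides of \eqref{ComposWithComplRegMap1}--\eqref{ComposWithComplRegMap2} are independent of the choice of $(\Phi, \Psi) \in \delta F_H[x]$, of $U, V$, and of the non-unique decomposition $G'[F(x)] = S - R$, so the formulae are well posed; uniqueness of the weak $\widehat{H}$-codifferential then gives \eqref{ComposWithComplRegMap2} from the generating families above. The remaining bookkeeping — that $T = G \circ F$ is defined on an open neighbourhood of $x$ and that only admissible increments of $T$ occur — is furnished by hypothesis~(7) (automatic when $\Sigma = E$).
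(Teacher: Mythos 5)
Your proof is correct and follows essentially the same route as the paper's: substitute the $H$-codifferential expansion of $F$ into the Fr\'echet expansion of $G$ at $F(x)$, control the remainder along $\alpha \mapsto F(x+\alpha\Delta x)-F(x)$ via the Lipschitz bound of Proposition~\ref{LipschitzContinuity} (this is exactly where hypothesis~(3) is used in the paper as well), and push the complete lattice homomorphisms $S$ and $R$ through the suprema and infima to obtain the generating families in \eqref{ComposWithComplRegMap1}--\eqref{ComposWithComplRegMap2}. You are in fact more explicit than the paper at the two places it is terse --- the remainder estimate, deferred there to ``arguing as in Theorem~\ref{ThSuperpWithGateauxDiff},'' and the identity $S\circ\Phi-R\circ\Psi=\sup_{h\in U,\,p\in V}(S\circ h-R\circ p)$, which is the arithmetically consistent form of the paper's displayed decomposition (whose $(S+R)(h(x)-p(x))$ terms sum to zero rather than to $G'[F(x)](\Phi+\Psi)$ and appear to be a slip) --- while the one point you flag but assert without proof, namely that $S\circ h-R\circ p$ lies in $\widehat H$ and that $\widehat H$ is closed under addition, is treated with no more justification in the paper's own proof.
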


\begin{proof}
Fix arbitrary complete lattice homomorphisms $S, R \colon E \to Y$ such that $G'[F(x)] = S - R$, and an arbitrary
$(\Phi, \Psi) \in \delta F_H [x]$. Arguing in a similar way to the proof of theorem \ref{ThSuperpWithGateauxDiff} one
can show that for any admissible argument increment $\Delta x \in X$
\[
T(x + \Delta x) - T(x) = G'[F(x)]( \Phi(\Delta x) + \Psi(\Delta x) ) + o( \Delta x, x),
\]
where $o(\alpha \Delta x, x) / \alpha \to 0$ as $\alpha \downarrow 0$. Let us show that the function 
$K(\cdot) = G'[F(x)]( \Phi(\cdot) + \Psi(\cdot) )$ can be represented as the sum of $\widehat{H}$-convex and
$\widehat{H}$-concave functions. Indeed, let $\Phi$ be generated by $U \subset H$, and $\Psi$ be generated by $V
\subset H$. Hence for any $x \in X$
\begin{multline*}
K(x) = (S - R)(\sup_{h \in U}h(x) + \inf_{p \in V}p(x)) = \\
= \sup_{h \in U, p \in V}((S + R)(h(x) - p(x)) + \inf_{h \in U, p \in V} ((S + R)(-h(x) + p(x))).
\end{multline*}
Taking into account the assumptions about the set $H$, and the fact that the sum of two complete lattice homomorphisms
is a positive linear operator one gets that $K$ is the sum of $\widehat{H}$-convex and $\widehat{H}$-concave
functions.

It remains to note that formulae (\ref{ComposWithComplRegMap1})-(\ref{ComposWithComplRegMap2}) do not depend on the
choice of complete lattice homomorphisms $S, R \colon E \to Y$, such that $G'[F(x)] = S - R$, 
$(\Phi, \Psi) \in \delta F_H[x]$ and $(U, V) \in D^w_H F(x)$, since 
\[
(S \circ \Phi - R \circ \Psi) + (S \circ \Psi - R \circ \Phi) = G'[F(x)](\Phi + \Psi), 
\]
and for any function $w \colon X \to E$ and $x \in X$ one has $G'[F(x)](w(\alpha x)) / \alpha \to 0$ as 
$\alpha \downarrow 0$, whenever $w(\alpha x) / \alpha \to 0$ as $\alpha \downarrow 0$.
\end{proof}

As a simple, yet useful corollary to the previous proposition one gets the following result.

\begin{theorem} \label{superpositionInRd}
Let $X$ be an arbitrary normed space. Suppose that the following conditions are satisfied:
\begin{enumerate}

\item{the set $H$ is a linear subspace of $\mathbb{R}^X$ (where $\mathbb{R}^X$ is the set of all functions mapping $X$
to $\mathbb{R}$);}

\item{functions $F_i \colon \Omega \to \mathbb{R}$ are weakly $H$-codifferentiable at a point $x \in \Omega$, 
$i \in I = \{1, \ldots, d\}$;}

\item{$\delta (F_i)_H[x]$ are Lipschitz continuous in a neighbourhood of zero, $i \in I$;}

\item{$S \subset \mathbb{R}^d$ is an open set such that $y = (F_1(x), \ldots, F_d(x)) \in S$;}

\item{a function $g \colon S \to \mathbb{R}$ is differentiable at the point $y$;}

\item{the function $T(\cdot) = g(F_1(\cdot), \ldots, F_d(\cdot))$ is defined on an open set 
$\mathcal{O} \subset \Omega$.}

\end{enumerate}
Then the function $T$ is weakly $H$-codifferentiable at the point $x$ and
\[
\delta T_H [x] = \sum_{i \in I} \frac{\partial g}{\partial y_i}(y) \delta (F_i)_H[x], \quad
D^w_H T(x) = \sum_{i \in I} \frac{\partial g}{\partial y_i}(y) D^w_H F_i(x).
\]
\end{theorem}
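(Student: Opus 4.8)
The plan is to obtain the increment formula for $T$ by feeding the first‑order Taylor expansion of $g$ at $y=(F_1(x),\dots,F_d(x))$ with the codifferential expansions of the $F_i$, and then to read off the identities for $\delta T_H[x]$ and $D^w_H T(x)$ by unwinding the arithmetic of $EPF_w(X,\overline{\mathbb R},H)$ and $EPS_w(H)$. For each $i\in I$ fix, using assumption~3, a pair $(\Phi_i,\Psi_i)\in\delta(F_i)_H[x]$ whose components are Lipschitz continuous near the origin, together with sets $U_i,V_i\subset H$ generating $\Phi_i$ and $\Psi_i$; write $\partial_i g(y)$ for $\partial g/\partial y_i(y)$. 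By Proposition~\ref{LipschitzContinuity} there is $L>0$ with $|F_i(x+\alpha\Delta x)-F_i(x)|\le L\alpha\|\Delta x\|$ along every admissible ray for all small $\alpha>0$, so $y(\Delta x):=(F_1(x+\Delta x),\dots,F_d(x+\Delta x))\to y$ along each such ray with $\|y(\alpha\Delta x)-y\|=O(\alpha)$; shrinking $\mathcal O$ we may assume $y(\Delta x)\in S$ on $\mathcal O$. Substituting $F_i(x+\Delta x)-F_i(x)=\Phi_i(\Delta x)+\Psi_i(\Delta x)+o_i(\Delta x,x)$ into $g(y+z)=g(y)+\sum_{i\in I}\partial_i g(y)z_i+o_g(z)$, where $o_g(z)/\|z\|\to 0$, gives
\[
T(x+\Delta x)-T(x)=\sum_{i\in I}\partial_i g(y)\bigl(\Phi_i(\Delta x)+\Psi_i(\Delta x)\bigr)+\sum_{i\in I}\partial_i g(y)\,o_i(\Delta x,x)+o_g\bigl(y(\Delta x)-y\bigr).
\]

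The algebraic core is to rewrite the leading term as a sum of an $H$‑convex and an $H$‑concave function. Since $H$ is a linear subspace of $\mathbb R^X$, it is a cone, it contains $0$, it is closed under addition, and $h\in H$ implies $-h\in H$; hence a nonnegative multiple of an $H$‑convex function is $H$‑convex, a finite sum of $H$‑convex functions is $H$‑convex, the negative of an $H$‑convex function is $H$‑concave, and symmetrically for $H$‑concave functions. A negative coefficient therefore interchanges the convex and the concave slot, so I set
\[
\Phi=\sum_{i:\,\partial_i g(y)\ge 0}\partial_i g(y)\,\Phi_i+\sum_{i:\,\partial_i g(y)<0}\partial_i g(y)\,\Psi_i,\qquad \Psi=\sum_{i:\,\partial_i g(y)\ge 0}\partial_i g(y)\,\Psi_i+\sum_{i:\,\partial_i g(y)<0}\partial_i g(y)\,\Phi_i.
\]
Then $\Phi$ is $H$‑convex, $\Psi$ is $H$‑concave, $\Phi+\Psi=\sum_{i\in I}\partial_i g(y)(\Phi_i+\Psi_i)$, $\Phi(0)+\Psi(0)=\sum_{i\in I}\partial_i g(y)(\Phi_i(0)+\Psi_i(0))=0$, and $0\in\interior(\dom\Phi\cap\dom\Psi)$ because that set contains the finite intersection $\bigcap_{i\in I}(\dom\Phi_i\cap\dom\Psi_i)$, itself a neighbourhood of the origin.

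It remains to control the remainder $R(\Delta x):=\sum_{i\in I}\partial_i g(y)\,o_i(\Delta x,x)+o_g(y(\Delta x)-y)$: for each admissible $\Delta x$ the first sum is $o(\Delta x,x)$ termwise, and for the second we write $z(\alpha)=y(\alpha\Delta x)-y$ and note that $\|z(\alpha)\|/\alpha$ stays bounded for small $\alpha$ (by the estimate above) while $o_g(z(\alpha))/\|z(\alpha)\|\to 0$ (with $o_g(0)=0$), so $o_g(z(\alpha))/\alpha\to 0$. Hence $T$ is weakly $H$‑codifferentiable at $x$ with $(\Phi,\Psi)\in\delta T_H[x]$, whence $\delta T_H[x]=[\Phi,\Psi]_w$ and $D^w_H T(x)=[U,V]_w$ for sets $U,V$ generating $\Phi,\Psi$. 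Comparing the displayed formulas for $\Phi$ and $\Psi$ with the definitions of scalar multiplication on $EPF_w(X,\overline{\mathbb R},H)$ in its three sign cases — and using $H=-H$ so that multiplication by negative scalars keeps us inside the same set — and with the definition of addition, yields $[\Phi,\Psi]_w=\sum_{i\in I}\partial_i g(y)\,\delta(F_i)_H[x]$; running the identical bookkeeping on the generating sets, with $\sup_{a\in A}a+\sup_{b\in B}b=\sup_{c\in A+B}c$ for the Minkowski sum, gives $D^w_H T(x)=\sum_{i\in I}\partial_i g(y)\,D^w_H F_i(x)$.

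The main obstacle is not analytic but this last identification: the sign cases, where $\partial_i g(y)<0$ swaps the $H$‑convex and $H$‑concave components, must be lined up exactly with the definitions of the operations on $EPF_w$ and $EPS_w$, while the Taylor expansion, the calmness estimate, and the control of the $o$‑terms are routine. Alternatively, one may read the statement off the preceding composition theorem applied to the vector‑valued map $(F_1,\dots,F_d)\colon\Omega\to\mathbb R^d$, taking as elementary functions on $\mathbb R^d$ the $d$‑fold product $\{(h_1,\dots,h_d)\mid h_i\in H\}$ and using that a linear functional on $\mathbb R^d$ is completely regular; I prefer the direct argument above because it keeps the equivalence‑class arithmetic transparent.
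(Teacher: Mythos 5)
Your proof is correct, but it is not the route the paper takes: the paper offers no separate argument for this theorem and simply presents it as an immediate corollary of the preceding composition theorem (outer map Fr\'echet differentiable with a completely regular derivative, decomposed as a difference $S-R$ of complete lattice homomorphisms), applied to the vector-valued map $(F_1,\dots,F_d)\colon\Omega\to\mathbb{R}^d$ — the alternative you mention and set aside at the end. Your direct argument reproduces the same analytic skeleton (first-order expansion of $g$, the calmness estimate from Proposition~\ref{LipschitzContinuity} to control $o_g(y(\Delta x)-y)$ along rays, and a sign-split of the gradient to sort terms into the $H$-convex and $H$-concave slots), but it buys something real: because each $\Phi_i=\sup_{h\in U_i}h$ and $\Psi_i=\inf_{p\in V_i}p$ is scalar-valued, multiplying by a positive scalar commutes with the sup/inf trivially, so you never need the sup-preservation property of a lattice homomorphism of $\mathbb{R}^d$ — which is exactly the delicate point in the paper's route, since a general linear functional on $\mathbb{R}^d$ with several nonzero coefficients is not a difference of two lattice homomorphisms; your coordinatewise decomposition into positive and negative parts is the one that actually works here. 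Your identification of the equivalence classes matches the paper's definitions of scalar multiplication (with the convex/concave swap for negative coefficients, legitimate since $H=-H$) and of Minkowski addition on $EPS_w(H)$. Two cosmetic points: for indices with $\partial g/\partial y_i(y)=0$ it is cleaner to drop the term outright rather than write $0\cdot\Phi_i$, since the paper explicitly refuses to evaluate $0\cdot(+\infty)$; and the inclusion $y(\Delta x)\in S$ for $x+\Delta x\in\mathcal{O}$ is already guaranteed by hypothesis~6, so no shrinking of $\mathcal{O}$ is needed for that purpose.
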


\begin{corollary}
Let all assumptions of the previous theorem be satisfied, and let $(H, d)$ be a metric space. Suppose that the mapping
$(\alpha, h) \to \alpha h$, $\alpha \in \mathbb{R}$ is uniformly continuous on $\mathbb{R} \times H$, and the mapping
$(h, p) \to h + p$ is uniformly continuous on $H \times H$ (in particular, one can suppose that $d$ is a norm). Suppose
also that all functions $F_i$ are continuous and Hausdorff continuously weakly $H$-codifferentiable at the point $x$,
and the function $g$ is continuously differentiable at the point $y$. Then the function $T$ is Hausdorff continuously
weakly $H$-codifferentiable at $x$.
\end{corollary}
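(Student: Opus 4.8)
The plan is to apply Theorem~\ref{superpositionInRd} not merely at $x$ but throughout a neighbourhood of $x$, and then to verify Hausdorff continuity of the resulting codifferential map by hand, using the uniform continuity hypotheses on the operations of $H$. First I would check that the hypotheses of Theorem~\ref{superpositionInRd} hold at every point $z$ of a sufficiently small neighbourhood $\mathcal{O}' \subset \mathcal{O}$ of $x$: the $F_i$ are weakly $H$-codifferentiable near $x$ since they are Hausdorff continuously weakly $H$-codifferentiable at $x$; by continuity of the $F_j$ the point $(F_1(z), \ldots, F_d(z))$ stays in the open set $S$, on which $g$ is differentiable; and the Lipschitz continuity of $\delta (F_i)_H[\cdot]$ in a neighbourhood of zero is retained near $x$ (the natural local form of hypothesis~(3), which is part of ``all assumptions of the previous theorem''). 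Hence $T$ is weakly $H$-codifferentiable on $\mathcal{O}'$, and, writing $c_i(z) = \frac{\partial g}{\partial y_i}(F_1(z), \ldots, F_d(z))$,
\[
D^w_H T(z) = \sum_{i \in I} c_i(z)\, D^w_H F_i(z), \qquad z \in \mathcal{O}'.
\]
Each $c_i(\cdot)$ is continuous at $x$, being the composition of the continuous map $z \mapsto (F_1(z), \ldots, F_d(z))$ with $\frac{\partial g}{\partial y_i}$.

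Next, for each $i \in I$ I would fix a selection $(\varphi_i, \psi_i)$ of $D^w_H F_i(\cdot)$ on $\mathcal{O}'$ whose set-valued components are Hausdorff continuous at $x$; such a selection exists by the assumed Hausdorff continuous weak $H$-codifferentiability of $F_i$. I then form the pair-valued map $z \mapsto (\Phi_T(z), \Psi_T(z))$ obtained by applying to $(\varphi_i(z), \psi_i(z))$ the scalar multiplication by $c_i(z)$ on $PS(H)$ --- scaling the two components in place when $c_i(z) \ge 0$, swapping them when $c_i(z) < 0$ --- and then summing over $i$ via the componentwise Minkowski addition in $H$. All the resulting sets lie in $H$, since $H$, being a linear subspace of $\mathbb{R}^X$, satisfies $-H = H$ and is closed under addition. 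By the first paragraph and the definitions of the operations on $EPS_w(H)$, one has $(\Phi_T(z), \Psi_T(z)) \in D^w_H T(z)$ for $z \in \mathcal{O}'$, so this is a selection of $z \mapsto D^w_H T(z)$; it remains only to prove that $\Phi_T$ and $\Psi_T$ are Hausdorff continuous at $x$.

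For this I would first establish that each of $z \mapsto c_i(z) \varphi_i(z)$ and $z \mapsto c_i(z) \psi_i(z)$ is Hausdorff continuous at $x$: via the triangle inequality for the Hausdorff metric $d_H$ induced by $d$, this reduces to making $d_H(c_i(z)\varphi_i(z), c_i(z)\varphi_i(x))$ and $d_H(c_i(z)\varphi_i(x), c_i(x)\varphi_i(x))$ small, and both estimates follow from the uniform continuity of $(\alpha, h) \mapsto \alpha h$ on $\mathbb{R} \times H$ --- which renders the bounds uniform over the possibly unbounded sets $\varphi_i(z), \psi_i(z)$ --- together with the continuity of $c_i$ and the Hausdorff continuity of $\varphi_i, \psi_i$ at $x$. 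When $c_i(x) \ne 0$ the sign of $c_i(\cdot)$ is constant near $x$, so the $i$-th summand of $(\Phi_T(z), \Psi_T(z))$ is near $x$ the fixed (possibly order-reversed) pair $(c_i(z) \varphi_i(z), c_i(z) \psi_i(z))$, hence Hausdorff continuous at $x$. When $c_i(x) = 0$ I would use the key observation that uniform continuity of $(\alpha, h) \mapsto \alpha h$ on all of $\mathbb{R} \times H$ forces $\sup_{h \in H} d(\alpha h, 0) \to 0$ as $\alpha \to 0$ (take $\alpha' = 0$, $h' = h$ in the definition of uniform continuity, noting $0 \cdot h$ is the zero function, which belongs to $H$); hence both components of the $i$-th summand stay within Hausdorff distance $\sup_{h \in H} d(c_i(z) h, 0) \to 0$ of $\{0\}$, regardless of the unstable sign of $c_i(z)$, and the summand is Hausdorff continuous at $x$ with value $(\{0\}, \{0\})$. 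Finally, since $(h, p) \mapsto h + p$ is uniformly continuous on $H \times H$, a finite componentwise Minkowski sum of set-valued maps that are Hausdorff continuous at $x$ is again Hausdorff continuous at $x$ (a routine induction on the number of summands). Therefore $\Phi_T$ and $\Psi_T$ are Hausdorff continuous at $x$, and $T$ is Hausdorff continuously weakly $H$-codifferentiable at $x$.

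The point demanding most care, and the reason the uniform continuity hypotheses cannot be weakened to plain continuity, is exactly the case $\frac{\partial g}{\partial y_i}(y) = 0$: there the sign of $c_i(z)$ --- hence which slot of the pair is swapped under scalar multiplication --- need not be locally constant, and the generating sets $\varphi_i(z)$ may fail to be bounded, so that a pointwise-continuity argument would not control $c_i(z)\varphi_i(z)$. It is the \emph{global} uniform continuity of $(\alpha, h) \mapsto \alpha h$ on $\mathbb{R} \times H$, giving $\alpha h \to 0$ uniformly in $h$ as $\alpha \to 0$, that removes this obstacle; a secondary, purely bookkeeping, point is making sure the hypotheses of Theorem~\ref{superpositionInRd} do hold at all nearby points, which is where the continuity of the $F_i$ and of $\nabla g$ enter.
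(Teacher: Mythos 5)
The paper states this corollary without proof, and your argument is exactly the one it leaves implicit: apply Theorem \ref{superpositionInRd} at every point of a small neighbourhood of $x$, assemble a selection of $z \mapsto D^w_H T(z)$ from Hausdorff continuous selections of the $D^w_H F_i(\cdot)$ via the scalar-multiplication and Minkowski-sum operations on $PS(H)$, and push Hausdorff continuity through using the uniform continuity of the two operations. Your two flagged subtleties are real and correctly resolved: hypothesis (3) of the theorem must indeed be read as holding at all nearby points (otherwise $T$ need not be codifferentiable off $x$, so the corollary's conclusion could not even be formulated), and the sign-unstable case $\frac{\partial g}{\partial y_i}(y)=0$ is the only delicate step --- though your observation that joint uniform continuity of $(\alpha,h)\mapsto\alpha h$ on $\mathbb{R}\times H$ forces $\sup_{h\in H} d(\alpha h,0)\to 0$ also exposes that this hypothesis is \emph{not} implied by $d$ being a norm on an unbounded $H$, so the paper's parenthetical is slightly inconsistent with the hypothesis your (correct) proof actually uses.
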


\begin{remark}
Let functions $F, F_1, F_2 \colon \Omega \to \mathbb{R}$ be (continuously) $H$-codifferentiable at a point 
$x \in \Omega$, and let $F \ne 0$ in a neighbourhood of $x$. As simple corollaries to theorem \ref{superpositionInRd}
one gets the $H$-codifferentiability (continuous $H$-codifferentiability) of the functions $F_1 \cdot F_2$ and
$1 / F$ at the point $x$ under suitable assumptions on the set $H$.
\end{remark}

Let us consider the supremum and the infimum of $H$-codifferentiable functions.

\begin{theorem} \label{TheoremSupInf}
Let functions $F_i \colon \Omega \to E$ be weakly $H$-codifferentiable at a point $x \in \Omega$, 
$i \in I = \{ 1, \ldots, n \}$. Suppose that the set $H$ satisfies the following assumptions:
\begin{enumerate}
\item{$H$ is closed under addition;}

\item{for any $h \in H$ one has $-h \in H$;}

\item{$H$ is closed under vertical shifts, i.~e. for any $c \in E$, $h \in H$ one has $h + c \in H$.}
\end{enumerate}
Then the functions $F = \sup_{i \in I} F_i$ and $G = \inf_{i \in I} F_i$ are weakly $H$-codifferentiable at the point
$x$. Moreover, for any $(\Phi_i, \Psi_i) \in \delta (F_i)_H[x]$ and $(U_i, V_i) \in D^w_H F_i(x)$, $i \in I$, one has
\begin{gather}
\delta F_H[x] = \left[ \sup_{i \in I} \bigg( F_i(x) - F(x) + \Phi_i - \sum_{j \in I \setminus \{ i \}} \Psi_j \bigg), 
\sum_{k \in I} \Psi_k \right]_w, \label{HderivOfSup} \\
\delta G_H[x] = \left[ \sum_{k \in I} \Phi_k, 
\inf_{i \in I} \bigg( F_i(x) - G(x) + \Psi_i - \sum_{j \in I \setminus \{ i \}} \Phi_j \bigg), \right]_w,
\label{HderivOfInf}
\end{gather}
and
\begin{gather}
D^w_H F(x) = \left[ \bigcup_{i \in I} \bigg\{ \{F_i(x) - F(x)\} + U_i - \sum_{j \in I \setminus \{ i \}} V_i \bigg\},
\sum_{k \in I} V_i \right]_w, \label{HcodiffOfSup} \\
D^w_H G(x) = \left[ \sum_{k \in I} U_i,
\bigcup_{i \in I}\bigg\{ \{F_i(x) - G(x)\} + V_i - \sum_{j \in I \setminus \{ i \} } U_i \bigg\} \right]_w.
\label{HcodiffOfInf}
\end{gather}
\end{theorem}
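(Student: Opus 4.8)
The plan is to treat the supremum case in detail; the infimum case follows by applying the supremum result to $-F_i$ and using Proposition on scalar multiplication by $-1$ together with assumption (ii) (which guarantees $(-H) \subseteq H$, so that $(-H)$-codifferentiability coincides with $H$-codifferentiability). So from now on I concentrate on $F = \sup_{i \in I} F_i$. First I would fix representatives $(\Phi_i, \Psi_i) \in \delta (F_i)_H[x]$ with $\Phi_i$ generated by $U_i$ and $\Psi_i$ generated by $V_i$, so that for each admissible $\Delta x$ one has $F_i(x+\Delta x) - F_i(x) = \Phi_i(\Delta x) + \Psi_i(\Delta x) + o_i(\Delta x, x)$. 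The key algebraic identity is the standard ``codifferential trick'': writing $a_i := F_i(x) - F(x) \le 0$, one has, for any reals (or lattice elements) $t_i$,
\[
\sup_{i \in I}\Big( a_i + t_i \Big) + \sum_{k \in I} s_k
= \sup_{i \in I}\Big( a_i + t_i + \sum_{j \in I \setminus \{i\}} s_j \Big) + \sum_{k \in I} s_k,
\]
which is a triviality when the sum over $k$ is pulled inside, but the point is that we will instead move the $\Psi_j$-terms \emph{out} of each branch of the supremum. Concretely, I would show
\[
F(x+\Delta x) - F(x) = \sup_{i \in I}\Big( F_i(x+\Delta x) - F(x) \Big)
= \sup_{i\in I}\Big( a_i + \Phi_i(\Delta x) + \Psi_i(\Delta x) + o_i(\Delta x,x)\Big),
\]
and then rewrite $\Psi_i(\Delta x) = \sum_{k\in I}\Psi_k(\Delta x) - \sum_{j\in I\setminus\{i\}}\Psi_j(\Delta x)$, so that the $\sum_{k\in I}\Psi_k(\Delta x)$ factors out of the supremum (it does not depend on $i$), leaving
\[
F(x+\Delta x)-F(x) = \sup_{i\in I}\Big( a_i + \Phi_i(\Delta x) - \sum_{j\in I\setminus\{i\}}\Psi_j(\Delta x) + o_i(\Delta x,x)\Big) + \sum_{k\in I}\Psi_k(\Delta x).
\]

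\textbf{Handling the remainder and the abstract-convexity structure.}
Next I would absorb the $o_i$-terms: since $I$ is finite, $\sup_{i\in I}\big(c_i + o_i(\Delta x,x)\big) = \sup_{i\in I} c_i + o(\Delta x, x)$ for a common remainder $o(\Delta x,x)$ with $o(\alpha\Delta x,x)/\alpha \to 0$ (this uses that the finite supremum of finitely many ``little-oh'' perturbations is again ``little-oh'', and that the lattice operations are continuous on the order complete topological vector lattice $E$; for the weak/Gâteaux notion we only need this along rays $\alpha\Delta x$, which is immediate). Thus, with
\[
\Phi(\Delta x) := \sup_{i\in I}\Big( a_i + \Phi_i(\Delta x) - \sum_{j\in I\setminus\{i\}}\Psi_j(\Delta x)\Big), \qquad
\Psi(\Delta x) := \sum_{k\in I}\Psi_k(\Delta x),
\]
we get $F(x+\Delta x)-F(x) = \Phi(\Delta x) + \Psi(\Delta x) + o(\Delta x,x)$ and $\Phi(0)+\Psi(0) = \sup_i a_i + \sum_k \Psi_k(0) = 0$ (because $\sup_i a_i = 0$ as the max is attained at any active index, and $\sum_k\Psi_k(0)=0$ since each $\Phi_k(0)+\Psi_k(0)=0$ and... here one must be slightly careful: actually $\sum_k\Psi_k(0)$ need not be $0$, so the $\Phi_i$-terms must carry the compensating $\Phi_k(0)$; this bookkeeping is exactly why $a_i = F_i(x)-F(x)$ appears, and one checks $\Phi(0) = \sup_i(a_i + \Phi_i(0) - \sum_{j\ne i}\Psi_j(0))$, $\Psi(0)=\sum_k\Psi_k(0)$, and their sum telescopes to $\sup_i(F_i(x)-F(x)) = 0$). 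It then remains to verify that $\Phi$ is $H$-convex and $\Psi$ is $H$-concave. For $\Psi$: each $\Psi_k$ is $H$-concave, hence $\Psi_k = \inf_{p\in V_k} p$; since $H$ is closed under addition, $\sum_k\Psi_k = \inf\{\sum_k p_k : p_k\in V_k\}$ is $H$-concave (infimum of sums of functions from $H$), which is precisely the claim $\sum_k V_k$ generates $\Psi$. For $\Phi$: each summand inside the $\sup_i$ is $\Phi_i + (\text{const } a_i) - \sum_{j\ne i}\Psi_j$; using assumption (iii) ($H$ closed under vertical shifts) to absorb $a_i$, assumption (ii) ($-h\in H$) to turn $-\Psi_j = -\inf_{p} p = \sup_p(-p)$ into an $H$-convex function, and assumption (i) ($H$ closed under addition) to add these, one sees each branch is $H$-convex, and a supremum (over $i$, and over the generating sets) of $H$-convex functions is $H$-convex. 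This yields $\delta F_H[x] = [\Phi,\Psi]_w$, i.e. \eqref{HderivOfSup}, and reading off the generating sets gives \eqref{HcodiffOfSup}.

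\textbf{Main obstacle.}
The routine parts are the algebraic rearrangement and the abstract-convexity closure arguments, which go through mechanically given hypotheses (i)--(iii). The one genuinely delicate point — the step I expect to need the most care — is the remainder estimate: justifying that the finite supremum $\sup_{i\in I}(c_i(\Delta x) + o_i(\Delta x,x))$ differs from $\sup_{i\in I} c_i(\Delta x)$ by a term that is $o(\Delta x, x)$ in the relevant (weak, i.e. ray-wise) sense, in an \emph{order complete topological vector lattice} rather than in $\mathbb{R}$. This requires that the map $(e_1,\dots,e_n)\mapsto \sup_i e_i$ be (sequentially) continuous at the relevant points and, more importantly, that it be ``Lipschitz-like'' for the purpose of little-oh estimates — concretely, that $|\sup_i u_i - \sup_i v_i| \le \sup_i |u_i - v_i|$, which is a standard lattice inequality, combined with the fact that $o_i(\alpha\Delta x,x)/\alpha \to 0$ in $E$ implies $\sup_i |o_i(\alpha\Delta x,x)|/\alpha \to 0$ (finite sup of null nets is null). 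I would state this as a short preliminary observation and then the rest of the proof is bookkeeping; the infimum formulas \eqref{HderivOfInf}, \eqref{HcodiffOfInf} are obtained, as noted, by the substitution $F_i \rightsquigarrow -F_i$ and do not require a separate argument.
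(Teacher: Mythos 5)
Your proposal is correct and follows essentially the same route as the paper: the same rearrangement $\Psi_i = \sum_k \Psi_k - \sum_{j\ne i}\Psi_j$ pulling the $i$-independent sum out of the supremum, and the same lattice-inequality control of the finitely many remainders (the paper bounds by $\sum_i |o_i|$ where you use $\sup_i |o_i|$, which is equivalent here). The only cosmetic differences are that you reduce the infimum case to the supremum case via $F_i \rightsquigarrow -F_i$ where the paper simply says it is proved analogously, and that you spell out the $H$-convexity/$H$-concavity verification and the $\Phi(0)+\Psi(0)=0$ bookkeeping that the paper leaves implicit.
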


\begin{proof}
Note that the right-hand sides of formulae (\ref{HderivOfSup})-(\ref{HcodiffOfInf}) do not depend on the choice of
$(\Phi_i, \Psi_i) \in \delta (F_i)_H[x]$ and $(U_i, V_i) \in D^w_H F_i(x)$. Therefore these formulae are correct.

We only consider the function $F$, since the assertion about the function $G$ is proved in a similar way. Fix arbitrary
$(\Phi_i, \Psi_i) \in \delta F_H[x]$ and $(U_i, V_i) \in D^w_H F(x)$, $i \in I$. For any admissible
argument increment $\Delta x \in X$ one has
\[
F_i(x + \Delta x) = F_i(x) + \Phi_i(\Delta x) + \Psi_i(\Delta x) + o_i(\Delta x, x),
\]
where $o_i(\alpha \Delta x, x) / \alpha \to 0$ as $\alpha \downarrow 0$. Hence
$$
F(x + \Delta x) - F(x) = \sup_{i \in I}(F_i(x) - F(x) + \Phi_i(\Delta x) + \Psi_i(\Delta x) + o_i(\Delta x, x)).
$$
Applying the simple inequality
\begin{multline*}
\big| \sup_{i \in I}(F_i(x) - F(x) + \Phi_i(\Delta x) + \Psi_i(\Delta x) + o_i(\Delta x, x)) - \\
- \sup_{i \in I}(F_i(x) - F(x) + \Phi_i(\Delta x) + \Psi_i(\Delta x)) \big| \le \sum_{i \in I} |o_i(\Delta x, x)|
\end{multline*}
one gets
$$
F(x + \Delta x) - F(x) = \sup_{i \in I}(F_i(x) - F(x) + \Phi_i(\Delta x) + \Psi_i(\Delta x)) + o(\Delta x, x),
$$
where $o(\alpha \Delta x, x) / \alpha \to 0$ as $\alpha \downarrow 0$. It remains to note that
\begin{multline*}
\sup_{i \in I}(F_i(x) - F(x) + \Phi_i(\Delta x) + \Psi_i(\Delta x)) = \\
= \sup_{i \in I}\bigg(F_i(x) - F(x) + \Phi_i(\Delta x) - \sum_{j \in I \setminus\{j\}} \Psi_j(\Delta x)\bigg) + 
\sum_{k = 1}^n \Psi_k(\Delta x),
\end{multline*}
and the fact that the right-hand side of the last equality is the sum of $H$-convex and $H$-concave functions.
\end{proof}

\begin{corollary} \label{CorContOfSupInf}
Suppose that all assumptions of the previous theorem are satisfied and $0 \in H$ (or, equivalently, for any $c \in E$
the function $h \equiv c$ belongs to $H$). Denote by $\ell \colon E \to H$ the natural embedding of $E$ in $H$, i.~e.
$(\ell(y))(\cdot) \equiv y$ for all $y \in E$. Let $(H, d)$ be a metric space such that the following assumptions are
satisfied:
\begin{enumerate}

\item{the mapping $(h, p) \to h + p$ is uniformly continuous on $H \times H$;}

\item{the mapping $h \to -h$ is uniformly continuous on $H$;}

\item{the quotient topology on $\ell(E)$ induced by $\ell$ is finer than the topology induced by the metric $d$
}

\end{enumerate}
(therefore if a function $T \colon \Omega \to E$ is continuous, then the function $\ell \circ T \colon \Omega \to H$ is
also continuous). Suppose also that all functions $F_i$ are continuous and Hausdorff continuously weakly
$H$-codifferentiable at the point $x$, $i \in I$. Then the functions $F$ and $G$ are Hausdorff continuously weakly
$H$-codifferentiable at $x$.
\end{corollary}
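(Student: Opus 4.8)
The plan is to deduce the corollary from Theorem~\ref{TheoremSupInf} by exhibiting a continuous selection of the relevant $H$-codifferential mappings. Since the explicit formula~(\ref{HcodiffOfSup}) expresses $D^w_H F(y)$ in terms of the data $F_i(y)$, $F(y)=\sup_i F_i(y)$, and the sets $U_i(y),V_i(y)$ coming from a Hausdorff continuous selection of $D^w_H F_i$, the task reduces to checking that each operation appearing in that formula preserves Hausdorff continuity. First I would fix, for each $i\in I$, a selection $\varphi_i=(\varphi_i^1,\varphi_i^2)\colon\mathcal O\to PS(H)$ of $D^w_H F_i$ whose components are Hausdorff continuous at $x$ (these exist by hypothesis and by the definition of Hausdorff continuous weak $H$-codifferentiability, shrinking $\mathcal O$ if necessary so that all $F_i$ are weakly $H$-codifferentiable on $\mathcal O$).

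Next I would build the candidate selection for $D^w_H F$. For $y\in\mathcal O$ set
\[
\Xi(y)=\bigcup_{i\in I}\Bigl(\{\ell(F_i(y)-F(y))\}+\varphi_i^1(y)-\sum_{j\in I\setminus\{i\}}\varphi_j^2(y)\Bigr),
\qquad
H(y)=\sum_{k\in I}\varphi_k^2(y),
\]
which by Theorem~\ref{TheoremSupInf} satisfies $(\Xi(y),H(y))\in D^w_H F(y)$ for all $y\in\mathcal O$ (the theorem's hypotheses on $H$ are exactly those assumed here). It then remains to verify that $\Xi$ and $H$ are Hausdorff continuous at $x$. For $H$ this is immediate from assumption~(i): a finite sum of Hausdorff continuous set-valued maps is Hausdorff continuous, since the Minkowski sum is $1$-Lipschitz in the Hausdorff metric on each summand (this uses uniform continuity, in fact the Lipschitz-type bound, of $(h,p)\mapsto h+p$). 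For $\Xi$, each of the finitely many sets in the union is a Minkowski combination of: the singleton $\{\ell(F_i(y)-F(y))\}$, which moves Hausdorff-continuously because $F_i$ and $F$ are continuous scalar functions, $\ell$ carries continuous $E$-valued maps to continuous $H$-valued maps by assumption~(iii), and $y\mapsto\{\ell(F_i(y)-F(y))\}$ is thus Hausdorff continuous; the term $\varphi_i^1(y)$, Hausdorff continuous by construction; and the terms $-\varphi_j^2(y)$, Hausdorff continuous because $\varphi_j^2$ is and $h\mapsto-h$ is uniformly continuous (assumption~(ii)). A finite union of Hausdorff continuous set-valued maps is Hausdorff continuous, so $\Xi$ is Hausdorff continuous at $x$. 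Hence $(\Xi,H)$ is a Hausdorff continuous selection of $y\mapsto D^w_H F(y)$, i.e. $F$ is Hausdorff continuously weakly $H$-codifferentiable at $x$. The argument for $G=\inf_i F_i$ is symmetric, using~(\ref{HcodiffOfInf}), with the roles of $\varphi_i^1$ and $\varphi_i^2$ interchanged.

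The one point that requires genuine care — and is the main obstacle — is the passage from "uniformly continuous" hypotheses on the algebraic operations of $H$ to quantitative Hausdorff-continuity estimates for the induced operations on nonempty subsets of $H$. Uniform continuity of $(h,p)\mapsto h+p$ on $H\times H$ does yield, for every $\varepsilon>0$, a $\delta>0$ such that $d_{\mathrm H}(A_1+A_2,B_1+B_2)\le\varepsilon$ whenever $d_{\mathrm H}(A_i,B_i)\le\delta$, because membership and the $\sup/\inf$ defining $d_{\mathrm H}$ can be estimated pointwise on the (possibly infinite) sets $A_i,B_i$; the special Lipschitz bound in the hypothesis makes this transparent, but I would phrase the lemma for the uniformly continuous case so as not to lose generality. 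The same remark applies to $h\mapsto-h$. Once this elementary lemma on Hausdorff continuity of Minkowski sums, reflections, finite unions, and singleton-valued maps of the form $y\mapsto\{\ell(c(y))\}$ is in place, the proof is the bookkeeping sketched above. I would therefore open the proof by recording that lemma (or citing the analogous fact used implicitly in the corollaries following the sum and scalar-multiple propositions), and then verify term by term that each constituent of~(\ref{HcodiffOfSup}) and~(\ref{HcodiffOfInf}) falls under it.
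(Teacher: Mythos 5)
Your proposal is correct and follows what is clearly the intended argument (the paper states this corollary without proof): apply formula~(\ref{HcodiffOfSup})--(\ref{HcodiffOfInf}) pointwise to Hausdorff continuous selections of the $D^w_H F_i$, and check that Minkowski sums, negations, finite unions, and the singleton maps $y \mapsto \{\ell(F_i(y)-F(y))\}$ each preserve Hausdorff continuity under hypotheses (i)--(iii). The one lemma you flag --- that uniform continuity of $(h,p)\mapsto h+p$ on $H\times H$ transfers to the induced operation on nonempty subsets in the Hausdorff metric --- is indeed the only point needing care, and your pointwise estimate for it is sound.
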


\begin{remark}
One can easily prove that under suitable assumptions the supremum of an infinite family of weakly
$H$-hypodifferentiable functions is also weakly $H$-hypodifferentiable (and that the infimum of an infinite family of
weakly $H$-hyperdifferentiable functions is weakly $H$-hypodifferentiable).
\end{remark}

\section{Necessary optimality conditions}\label{SectionNessOptCond}

In this section we derive necessary optimality conditions for $H$-quasidifferentiable and $H$-codifferentiable
functions with the use of upper abstract convex and lower abstract concave approximations. Then we show how they can be
transformed into more constructive necessary optimality conditions in some particular cases.

\subsection{General necessary conditions for an extremum}

In this section we only consider the case $E = \mathbb{R}$; however, one can modify main results of this section to the
case of general order complete topological vector lattices.

We need an auxiliary definition (see~\cite{Zaffaroni, RubinovZaff}).

\begin{definition}
Let $f \colon X \to \overline{\mathbb{R}}$ be an arbitrary function such that $f(0) = 0$. The function $f$ is said to be
subhomogeneous (superhomogeneous) if for any $\Delta x \in X$ and $\alpha \in (0, 1)$ one has
\[
f(\alpha \Delta x) \le \alpha f(\Delta x) \quad (f(\alpha \Delta x) \ge \alpha f(\Delta x)).
\]
\end{definition}

The class of all subhomogeneous (or superhomogeheous) functions is very broad. In particular, any convex (concave)
function $f \colon X \to \overline{\mathbb{R}}$ such that $f(0) = 0$ is subhomogeneous (superhomogeneous). Also, any
positively homogeneous of degree $\lambda \ge 1$ ($\lambda \in (0, 1]$) function is subhomogeneous (superhomogeneous).

Let $A \subset X$ be a convex set, and let the set $H$ be closed under vertical shifts. If $x \in X$ then denote 
$A - x = \{ y \in X \mid y = a - x, \; a \in A \}$. Consider the following optimization problem
\begin{equation} \label{mathProgInf}
f_0(x) \to \inf, \quad x \in A, \quad f_i(x) \le 0, \quad i \in I,
\end{equation}
where $f_i \colon X \to \mathbb{R}$, $i \in I_0 = \{ 0 \} \cup I$, $I = \{ 1, \ldots, n \}$.

\begin{theorem}
Let functions $\varphi_i \colon X \to \overline{\mathbb{R}}$ be weak upper $H$-convex approximations of the functions
$f_i$ at a point $x^* \in A$ such that $\varphi_i(0) = 0$, $i \in I_0$. Suppose that $x^*$ is a point of local minimum
of problem (\ref{mathProgInf}), and the $H$-convex function
\begin{equation} \label{mathProgUCA}
g(\cdot) = \sup\{ \varphi_0(\cdot), \varphi_1(\cdot) + f_1(x^*), \ldots, \varphi_n(\cdot) + f_n(x^*) \}
\end{equation}
is subhomogeneous. Then $0$ is a point of global minimum of the function $g$ on the set $A - x^*$. Moreover, if $A = X$
and $0 \in H$, then $0 \in \underline{\partial}_H g(0)$.
\end{theorem}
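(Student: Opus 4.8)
The plan is to establish the first assertion by contradiction---turning a hypothetical point $y\in A-x^*$ with $g(y)<g(0)$ into a feasible descent direction for (\ref{mathProgInf}) at $x^*$---and then to obtain the ``moreover'' part directly from the abstract global optimality criterion (\ref{glExtrCondAbstSubDiff}).

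First I would pin down $g(0)$. Since $\varphi_i(0)=0$ for all $i\in I_0$ and, by feasibility of $x^*$, $f_i(x^*)\le 0$ for $i\in I$, the supremum in (\ref{mathProgUCA}) at $0$ equals $\sup\{0,f_1(x^*),\dots,f_n(x^*)\}=0$, so $g(0)=0$; this is exactly the normalization needed to apply the subhomogeneity hypothesis in the form $g(\alpha y)\le\alpha g(y)$ for all $y\in X$ and $\alpha\in(0,1)$. I would also record that $0\in\interior\dom g$ (each $\varphi_i$ has $0\in\interior\dom\varphi_i$ by condition~1 of the definition, and $\bigcap_{i\in I_0}\dom\varphi_i\subseteq\dom g$), and that subhomogeneity together with $0\in\interior\dom g$ forces $g>-\infty$ on all of $X$: if $g(y)=-\infty$ then $g(\alpha y)=-\infty$ for small $\alpha>0$, contradicting $\alpha y\in\dom g$.

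Now suppose $0$ is not a global minimizer of $g$ on $A-x^*$, i.e.\ there is $y\in A-x^*$ with $g(y)<g(0)=0$; by the previous step $g(y)$ is a finite negative real. Since $A$ is convex, $x^*\in A$ and $x^*+y\in A$, we have $x^*+\alpha y\in A$ for all $\alpha\in[0,1]$. For each $i\in I_0$, condition~2 of the definition of a weak upper $H$-convex approximation, applied to $f_i$, $\varphi_i$ at $x^*$ with increment $y$, yields $\alpha_i>0$ and $\beta_i\colon(0,\alpha_i)\to\mathbb{R}$ with $\beta_i(\alpha)\to 0$ as $\alpha\downarrow 0$ and
\[
f_i(x^*+\alpha y)-f_i(x^*)\le\varphi_i(\alpha y)+\alpha\beta_i(\alpha),\qquad \alpha\in[0,\alpha_i)
\]
(shrinking $\alpha_i$ if needed so that $\alpha y\in\dom\varphi_i$, which is harmless since $0\in\interior\dom\varphi_i$). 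Combining this with $\varphi_0\le g$ and $\varphi_i(\cdot)+f_i(x^*)\le g(\cdot)$ (the defining inequalities of the supremum $g$) and with subhomogeneity $g(\alpha y)\le\alpha g(y)$, I would obtain, for every $\alpha\in(0,1)$ with $\alpha<\min_{i\in I_0}\alpha_i$,
\begin{gather*}
f_0(x^*+\alpha y)-f_0(x^*)\le\alpha\big(g(y)+\beta_0(\alpha)\big),\\
f_i(x^*+\alpha y)\le\alpha\big(g(y)+\beta_i(\alpha)\big)\quad(i\in I).
\end{gather*}
Since $g(y)<0$, $I_0$ is finite, and each $\beta_i(\alpha)\to 0$, for all sufficiently small $\alpha>0$ every right-hand bracket is strictly negative; hence $x^*+\alpha y$ is feasible for (\ref{mathProgInf}) and $f_0(x^*+\alpha y)<f_0(x^*)$, while $x^*+\alpha y\to x^*$ as $\alpha\downarrow 0$. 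This contradicts the local minimality of $x^*$, so $0$ minimizes $g$ over $A-x^*$.

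Finally, if $A=X$ then $A-x^*=X$, so $g(y)\ge g(0)=0$ for all $y\in X$; since $0\in H$, the constant function $h\equiv 0$ belongs to $H$, satisfies $h\le g$ and $h(0)=g(0)$, hence $h\in\underline{\partial}_H g(0)$, which is precisely $0\in\underline{\partial}_H g(0)$ (identifying $0\in\mathbb{R}$ with the constant function $h\equiv 0$), i.e.\ the instance of (\ref{glExtrCondAbstSubDiff}) for $g$ at the origin. I expect the only genuinely delicate points to be ruling out the value $-\infty$ for $g$ via subhomogeneity and choosing $\alpha$ small enough to simultaneously dominate all the remainders $\beta_i$; the rest is routine bookkeeping with the constants $\alpha_i$ and the domains.
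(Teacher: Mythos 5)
Your proof is correct and follows essentially the same route as the paper: assume a point $y\in A-x^*$ with $g(y)<g(0)=0$, combine the defining inequality of the weak upper $H$-convex approximations with the bound $\varphi_i(\cdot)+f_i(x^*)\le g(\cdot)$ and the subhomogeneity estimate $g(\alpha y)\le\alpha g(y)$, and contradict local minimality for small $\alpha$. The only cosmetic difference is that the paper first packages the constraints into the single function $F(\cdot)=\max\{f_0(\cdot)-f_0(x^*),f_1(\cdot),\dots,f_n(\cdot)\}$, of which $g$ is a weak upper $H$-convex approximation, while you run the same inequalities componentwise; your extra care about $g(0)=0$, the value $-\infty$, and the explicit verification of the ``moreover'' clause via the constant function $h\equiv 0$ is sound.
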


\begin{proof}
From the fact that $x^*$ is a point of local minimum of problem (\ref{mathProgInf}) it follows that $x^*$ is a
point of local minimum of the function
\[
F(\cdot) = \max\{ f_0(\cdot) - f_0(x^*), f_1(\cdot), \ldots, f_n(\cdot) \}
\]
on the set $A$. It is easy to check that the function $g$ (see.~(\ref{mathProgUCA})) is a weak upper $H$-convex
approximation of the function $F$ at $x^*$ and $g(0) = 0$.   
                                                                                                              
Suppose that $0$ is not a point of global minimum of the function $g$ on the set $A - x^*$. Then there exists 
$y \in A$ such that $g(y - x^*) = - m < 0 = g(0)$. Denote $\Delta x = y - x^*$. Since $g$ is a weak upper $H$-convex
approximation of the function $F$ at the point $x^*$ and $g$ is subhomogeneous, there exists $\delta \in (0, 1)$ such
that
\[
F(x^* + \alpha \Delta x) - F(x^*) \le g(\alpha \Delta x) + \frac{m}{2} \alpha
\le \alpha g(\Delta x) + \frac{m}{2} \alpha = -\frac{m}{2} \alpha \quad \forall \alpha \in (0, \delta),
\]
which contradicts the fact that $x^*$ is a point of local minimum of $F$ on $A$.
\end{proof}

Arguing in a similar way one can prove the following theorem, which is the ``mirror version'' of the previous one.

\begin{theorem} \label{ThMathProgrMax}
Let functions $\psi_i \colon X \to \overline{\mathbb{R}}$ be weak lower $H$-concave approximations of the functions
$f_i$ at a point $x^* \in A$ such that $\psi_i(0) = 0$, $i \in I_0$. Suppose that $x^*$ is a point of local maximum in
the problem
\begin{equation} \label{mathProgSup}
f_0(x) \to \sup, \quad x \in A, \quad f_i(x) \ge 0, \quad i \in I, 
\end{equation}
and the $H$-concave function
\[
g(\cdot) = \inf\{ \psi_0(\cdot), \psi_1(\cdot) + f_1(x^*), \ldots, \psi_n(\cdot) + f_n(x^*) \}
\]
is superhomogeneous. Then $0$ is a point of global maximum of the function $g$ on the set $A - x^*$. Moreover, if 
$A = X$ and $0 \in H$, then $0 \in \overline{\partial}_H g(0)$.
\end{theorem}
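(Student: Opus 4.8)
The plan is to mirror the proof of the preceding minimization theorem (Theorem for problem~(\ref{mathProgInf})), replacing maxima by minima, ``$\le$'' by ``$\ge$'', and subhomogeneity by superhomogeneity throughout. First I would reduce the constrained maximization problem~(\ref{mathProgSup}) to an unconstrained one: since $x^*$ is a point of local maximum of~(\ref{mathProgSup}), it is a point of local maximum on the set $A$ of the auxiliary function
\[
F(\cdot) = \min\{ f_0(\cdot) - f_0(x^*), f_1(\cdot), \ldots, f_n(\cdot) \}.
\]
Indeed, for $x \in A$ near $x^*$ that are feasible for~(\ref{mathProgSup}) one has $f_i(x) \ge 0 = $ (roughly) $f_i(x^*)$ for $i \in I$ and $f_0(x) \le f_0(x^*)$, so $F(x) \le 0 = F(x^*)$; conversely $F(x) > F(x^*)$ would force $f_0(x) > f_0(x^*)$ together with $f_i(x) > 0$ for all $i \in I$, contradicting local maximality. (The mild slack here involving whether $f_i(x^*)=0$ is handled exactly as in the minimization proof, where it is simply not an issue because the $f_i$, $i\in I$, enter $F$ directly.)

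Next I would check that the $H$-concave function $g(\cdot) = \inf\{ \psi_0(\cdot), \psi_1(\cdot) + f_1(x^*), \ldots, \psi_n(\cdot) + f_n(x^*)\}$ is a weak lower $H$-concave approximation of $F$ at $x^*$, with $g(0) = 0$. The value $g(0) = 0$ follows from $\psi_i(0) = 0$ and the feasibility sign $f_i(x^*) \ge 0$, $i \in I$, together with $\psi_0(0)=0$, so that the infimum over $i \in I_0$ is attained by the $i=0$ term and equals $0$. That $g$ is $H$-concave uses that $H$ is closed under vertical shifts (so each $\psi_i + f_i(x^*)$ is again $H$-concave) and that the infimum of finitely many $H$-concave functions is $H$-concave. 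The approximation inequality $F(x^* + \alpha \Delta x) - F(x^*) \ge g(\alpha\Delta x) - \alpha\beta(\alpha)$ on a suitable interval follows from the corresponding inequalities for each $\psi_i$ by taking minima over $i \in I_0$ and absorbing the finitely many error terms into a single $\beta(\alpha) \to 0$, exactly as the $\max$/$\sup$ version absorbs its error terms.

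Finally, the core argument: suppose $0$ is not a point of global maximum of $g$ on $A - x^*$, so there is $y \in A$ with $g(y - x^*) = m > 0 = g(0)$. Writing $\Delta x = y - x^*$ and using that $g$ is a weak lower $H$-concave approximation of $F$ together with superhomogeneity of $g$, I would obtain, for all sufficiently small $\alpha \in (0,\delta)$,
\[
F(x^* + \alpha \Delta x) - F(x^*) \ge g(\alpha \Delta x) - \frac{m}{2}\alpha \ge \alpha g(\Delta x) - \frac{m}{2}\alpha = \frac{m}{2}\alpha > 0,
\]
contradicting local maximality of $F$ on $A$ at $x^*$. For the last assertion, if $A = X$ and $0 \in H$, then $0$ being a global maximum of the $H$-concave function $g$ with $g(0) = 0$ means exactly $g(y) \le 0 = g(0)$ for all $y$, i.e.\ the constant function $h \equiv 0 = g(0)$ lies in $\supp^+(g, H)$ and agrees with $g$ at $0$, which is the definition $0 \in \overline{\partial}_H g(0)$; this is the ``mirror'' of the global maximum criterion~(\ref{glExtrCondAbstSubDiff}). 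The only point requiring any genuine care is the sign bookkeeping in the reduction to $F$ and in verifying $g(0)=0$ (ensuring the feasibility constraints $f_i \ge 0$ push $F$ below $0$ rather than above); everything else is a routine reflection of the already-proven minimization theorem.
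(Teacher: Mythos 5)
Your proof is correct and is exactly what the paper intends: the paper omits the proof of this theorem, stating only that it is the ``mirror version'' of the preceding minimization theorem, and your argument is precisely that mirrored proof (reduction to $F = \min\{f_0 - f_0(x^*), f_1, \ldots, f_n\}$, verification that $g$ is a weak lower $H$-concave approximation of $F$ with $g(0)=0$, the contradiction via superhomogeneity, and the constant function $0$ witnessing $0 \in \overline{\partial}_H g(0)$). No gaps.
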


One can obtain necessary optimality conditions in terms of abstract convex approximations for more general optimization
problems, although it requires more restrictive assumptions. Namely, let $X$ be a normed space and $M \subset \Omega$
be a nonempty set. For any $x \in \cl M$ denote by $T_M(x)$ the contingent cone to the set $M$ at the point $x$ (see
\cite{AubinFrankowska}, chapter 4). The following theorem holds true.

\begin{theorem} \label{ThGenMathProg}
Let $x^* \in X$ be a point of local minimum in the problem
\begin{equation} \label{MathProgGenUCA}
f_0(x) \to \inf, \quad x \in M, \quad f_i(x) \le 0, \quad i \in I,
\end{equation}
Suppose that a function $\varphi_i \colon X \to \overline{\mathbb{R}}$ is a strong upper $H$-convex approximation of
the function $f_i$ at the point $x^*$ such that $\varphi_i(0) = 0$, and $\varphi_i$ is Lipschitz continuous in a
neighbourhood of zero, $i \in I_0$. Suppose also that the $H$-convex function
\begin{equation} \label{UCAofNewObF}
g(x) = \sup\{ \varphi_0(x), \varphi_1(x) + f_1(x^*), \ldots, \varphi_n(x) + f_n(x^*) \} \quad x \in X
\end{equation}
is subhomogeneous. Then $0$ is a point of global minimum of $g$ on $T_M(x^*)$.
\end{theorem}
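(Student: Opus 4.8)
The plan is to reduce problem (\ref{MathProgGenUCA}) to the unconstrained minimization of a single \emph{max-type} function, and then to argue by contradiction using the contingent-cone description of feasible perturbations together with the Lipschitz continuity of the approximations near zero.

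First I would introduce $F(\cdot) = \max\{ f_0(\cdot) - f_0(x^*), f_1(\cdot), \ldots, f_n(\cdot) \}$. Since $x^*$ is feasible, $f_i(x^*) \le 0$ for $i \in I$, so $F(x^*) = 0$, and exactly as in the proof of the preceding necessary optimality theorems one checks that $x^*$ is a point of local minimum of $F$ on $M$ (if $x$ near $x^*$ in $M$ is feasible then $F(x) \ge f_0(x) - f_0(x^*) \ge 0$, and otherwise $F(x) \ge f_i(x) > 0$ for some $i$). Next I would verify that $g$ from (\ref{UCAofNewObF}) is a strong upper $H$-convex approximation of $F$ at $x^*$ with $g(0) = 0$: taking the maximum over $i \in I_0$ of the inequalities $f_i(x^* + \Delta x) - f_i(x^*) \le \varphi_i(\Delta x) + \|\Delta x\|\beta_i(\Delta x)$ (with $\beta_i(\Delta x) \to 0$, valid on a ball $B(0, r_i)$) and putting $\beta = \max_{i \in I_0} |\beta_i|$ yields $F(x^* + \Delta x) - F(x^*) \le g(\Delta x) + \|\Delta x\|\beta(\Delta x)$ on $B(0, r)$ with $r = \min_i r_i$; that $g$ is $H$-convex follows from the closedness of $H$ under vertical shifts together with the fact that a supremum of $H$-convex functions is $H$-convex, and $0 \in \interior \dom g = \interior \bigcap_{i \in I_0} \dom \varphi_i$. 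Crucially, since every $\varphi_i$ is Lipschitz continuous near zero, so is $g$; fix a Lipschitz constant $L > 0$ and a radius $r_0 > 0$ for $g$.

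Then I would argue \emph{ab absurdo}: assume $0$ is not a point of global minimum of $g$ on $T_M(x^*)$, so there is $v \in T_M(x^*)$ with $g(v) = -m < 0$. By the definition of the contingent cone there exist $\alpha_k \downarrow 0$ and $v_k \to v$ with $x^* + \alpha_k v_k \in M$ for all $k$. For $k$ large both $\alpha_k v_k$ and $\alpha_k v$ lie in $B(0, r_0)$ and $\alpha_k \in (0, 1)$, so combining the Lipschitz estimate for $g$ with its subhomogeneity gives
\[
g(\alpha_k v_k) \le g(\alpha_k v) + L\alpha_k \|v_k - v\| \le \alpha_k g(v) + L\alpha_k\|v_k - v\| = \alpha_k\bigl(-m + L\|v_k - v\|\bigr) \le -\tfrac{m}{2}\alpha_k
\]
once $\|v_k - v\| < m/(2L)$. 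Substituting $\Delta x = \alpha_k v_k$ into the approximation inequality for $F$ and using that $\|v_k\|$ stays bounded and $\beta(\alpha_k v_k) \to 0$, we obtain $F(x^* + \alpha_k v_k) - F(x^*) \le -\tfrac{m}{2}\alpha_k + \alpha_k\|v_k\|\beta(\alpha_k v_k) < 0$ for all large $k$, while $x^* + \alpha_k v_k \in M$ and $x^* + \alpha_k v_k \to x^*$ — contradicting the local minimality of $F$ on $M$.

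The one genuinely delicate point is the passage from $g(\alpha_k v_k)$ to $\alpha_k g(v)$ in the displayed estimate: the contingent cone only furnishes perturbations along the \emph{moving} directions $v_k$, not along $v$ itself, so one cannot simply invoke subhomogeneity at $v$. Bridging this gap is exactly what the hypothesis that each $\varphi_i$ (hence $g$) is Lipschitz continuous near zero is for, and it explains why strong upper $H$-convex approximations plus the Lipschitz condition — rather than the weaker data used in the earlier theorems of this subsection — appear in the assumptions. Everything else is a routine transcription of the arguments already given above.
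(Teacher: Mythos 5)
Your proposal is correct and follows essentially the same route as the paper's own proof: reduce to the max-function $F$, check that $g$ is a strong upper $H$-convex approximation of $F$ that is Lipschitz near zero, take a contingent-cone sequence $x^* + \alpha_k v_k \in M$ with $v_k \to v$, bridge from $g(\alpha_k v_k)$ to $g(\alpha_k v)$ via the Lipschitz estimate, and invoke subhomogeneity to force a linear decrease contradicting local minimality. The only differences are cosmetic (splitting the error as $m/2$ rather than $m/3$), and your closing remark correctly identifies why the Lipschitz hypothesis is needed here.
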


\begin{proof}
It is clear that $x^*$ is a point of local minimum of the function 
\[
F(\cdot) = \max\{ f_0(\cdot) - f(x^*), f_1(\cdot), \ldots, f_n(\cdot) \}
\]
on the set $M$. Also, it is easy to verify that the function $g$ (see~(\ref{UCAofNewObF})) is a strong upper
$H$-convex approximation of $F$ at $x^*$. Moreover, $g$ is Lipschitz continuous in a neighbourhood of zero 
and $g(0) = 0$.

Suppose that there exists $v \in T_M(x^*)$ such that $g(v) = -m < g(0)$. By the definition of $T_M(x^*)$ there exist
sequences $\{ h_n \} \subset (0, +\infty)$ and $\{ v_n \} \subset X$ such that $x^* + h_n v_n \in M$, $h_n \downarrow 0$
and $v_n \to v$ as $n \to \infty$.

Applying the fact that $g$ is a strong upper $H$-convex approximation of $F$ at $x^*$ one has that there exist $r > 0$
and a function $\beta \colon B(0, r) \to \mathbb{R}$ such that $\beta(\Delta x) \to 0$ as $\Delta x \to 0$ and
\[
F(x^* + \Delta x) - F(x^*) \le g(\Delta x) + \beta(\Delta x) \| \Delta x \| \quad \forall \Delta x \in B(0, r).
\]
Hence there exists $n_1 \in \mathbb{N}$ such that for any $n > n_1$ one has 
$| \beta(h_n v_n ) | \| v_n \| \le m / 3$. Since $g$ is Lipschitz continuous in a neighbourhood of zero and $v_n \to v$,
there exist $L > 0$ and $n_2 \in \mathbb{N}$ such that for all $n > n_2$
\[
| g(h_n v_n) - g(h_n v) | \le L h_n \| v_n - v \| \le \frac{m}{3} h_n.
\]
Therefore, taking into account the subhomogeneity of $g$, one gets that for any $n > \max\{ n_1, n_2 \}$
\begin{multline*}
F(x^* + h_n v_n) - F(x^*) \le g(h_n v_n) + \beta(h_n v_n) h_n \| v_n \| \le \\
\le g(h_n v) + \frac{2m}{3} h_n \le -m h_n + \frac{2m}{3} h_n < 0,
\end{multline*}
which contradicts the fact that $x^*$ is a point of local minimum of $F$ on $M$.
\end{proof}

\begin{remark}
One can easily proof an analogous theorem about necessary condition for a local maximum in the problem
\[
f_0(x) \to \sup, \quad x \in M, \quad f_i(x) \ge 0, \quad i \in I
\]
in terms of strong lower $H$-concave approximations.
\end{remark}

As obvious corollaries to the previous theorems one gets the following necessary optimality conditions for
$H$-codifferentiable functions.

\begin{theorem} \label{ThNessMinCondForHcodiff}
Let the functions $f_i$, $i \in I_0$ be weakly $H$-codifferentiable at a point $x^* \in A$, and let $x^*$ be a point
of local minimum of problem (\ref{mathProgInf}). Suppose that the set $H$ is closed under addition and for any $h \in
H$ one has $0 \in \interior \dom h$. Then for any $(\Phi_i, \Psi_i) \in \delta (f_i)_H(x^*)$ and 
$p_i \in \overline{\partial}_H \Psi_i(0)$, $i \in I_0$ such that the $H$-convex function
\[
g(\cdot) = \sup\{ \Phi_0(\cdot) + p_0(\cdot), \Phi_1(\cdot) + p_1(\cdot) + f_1(x^*), \ldots,
\Phi_n(\cdot) + p_n(\cdot) + f_n(x^*) \}
\]
is subhomogeneous the function $g$ attains a global minimum on the set $A - x^*$ at the origin.
\end{theorem}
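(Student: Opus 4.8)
The plan is to reduce this to Theorem~\ref{ThGenMathProg} (or its analogue) by verifying that each $\Phi_i(\cdot) + p_i(\cdot)$ is a weak upper $H$-convex approximation of $f_i$ at $x^*$ with the required normalization. First I would fix arbitrary $(\Phi_i, \Psi_i) \in \delta (f_i)_H(x^*)$ and $p_i \in \overline{\partial}_H \Psi_i(0)$ for $i \in I_0$. By the proposition on upper $H$-convex approximations proved earlier (the one stating that $\Phi + p$ is a weak upper $H$-convex approximation of $F$ at $x$ whenever $p \in \supp^+(\Psi, H)$), each function $\varphi_i := \Phi_i + p_i$ is a weak upper $H$-convex approximation of $f_i$ at $x^*$; here one uses the hypothesis that $H$ is closed under addition and that $0 \in \interior \dom h$ for every $h \in H$, which guarantees $0 \in \interior \dom \varphi_i$.

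Next I would check the normalization $\varphi_i(0) = 0$. Since $(\Phi_i, \Psi_i) \in \delta (f_i)_H(x^*)$ we may assume $\Phi_i(0) + \Psi_i(0) = 0$; and since $p_i \in \overline{\partial}_H \Psi_i(0)$ we have $p_i(0) = \Psi_i(0)$ and $p_i \geq \Psi_i$. Hence $\varphi_i(0) = \Phi_i(0) + p_i(0) = \Phi_i(0) + \Psi_i(0) = 0$, as required. At this point the hypotheses of the ``mirror-free'' version of Theorem~\ref{ThGenMathProg} specialized to $M = A$ (equivalently, the theorem preceding Theorem~\ref{ThGenMathProg} that handles a convex constraint set $A$ and weak upper $H$-convex approximations) are all in force: the $\varphi_i$ are weak upper $H$-convex approximations with $\varphi_i(0) = 0$, and $g = \sup\{\varphi_0, \varphi_1 + f_1(x^*), \ldots, \varphi_n + f_n(x^*)\}$ is assumed subhomogeneous.

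Applying that theorem directly yields that $0$ is a point of global minimum of $g$ on $A - x^*$, which is the conclusion. I would close by remarking that, just as in the proof of the earlier theorem, the passage from the constrained problem (\ref{mathProgInf}) to the unconstrained-in-objective form via $F(\cdot) = \max\{f_0(\cdot) - f_0(x^*), f_1(\cdot), \ldots, f_n(\cdot)\}$ shows $g$ is a weak upper $H$-convex approximation of $F$ at $x^*$ with $g(0) = 0$, so the statement is an immediate corollary and requires no new argument.

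The only mild subtlety — and the step I would be most careful about — is the bookkeeping that the approximation property of $\Phi_i + p_i$ genuinely transfers through the $\sup$: one needs that a finite supremum of weak upper $H$-convex approximations, suitably shifted by the constants $f_i(x^*)$, is again a weak upper $H$-convex approximation of the corresponding $\max$, which is exactly what the earlier theorem's proof establishes and why subhomogeneity of $g$ is imposed as a hypothesis rather than derived. No genuinely hard estimate arises here; the content is entirely in correctly invoking the prior results, so I do not anticipate a real obstacle beyond matching hypotheses.
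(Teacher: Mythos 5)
Your proposal is correct and follows exactly the route the paper intends: combine the earlier proposition (that $\Phi_i + p_i$ is a weak upper $H$-convex approximation of $f_i$ whenever $p_i \in \supp^+(\Psi_i, H)$, which holds a fortiori for $p_i \in \overline{\partial}_H \Psi_i(0)$) with the normalization $\varphi_i(0) = \Phi_i(0) + p_i(0) = \Phi_i(0) + \Psi_i(0) = 0$, and then invoke the first theorem of Section~\ref{SectionNessOptCond} for the convex set $A$; the paper itself presents this result as an immediate corollary with no written proof. The only cosmetic slip is your opening reference to Theorem~\ref{ThGenMathProg} (the strong-approximation, contingent-cone version), which you correctly replace with the weak-approximation theorem for convex $A$ before applying anything.
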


\begin{theorem}
Let the functions $f_i$, $i \in I_0$ and the set $H$ be as in the previous theorem. Suppose that $x^*$ is a point of
local maximum of problem (\ref{mathProgSup}). Then for any $(\Phi_i, \Psi_i) \in \delta (f_i)_H(x)$ and 
$h_i \in \underline{\partial}_H \Phi_i(0)$, $i \in I_0$ such that the $H$-concave function
\[
g(\cdot) = \inf\{ h_0(\cdot) + \Psi_0(\cdot), h_1(\cdot) + \Psi_1(\cdot) + f_1(x^*), \ldots,
h_n(\cdot) + \Psi_n(\cdot) + f_n(x^*) \}
\]
is superhomogeneous the function $g$ has a global maximum value on the set $A - x^*$ at the origin.
\end{theorem}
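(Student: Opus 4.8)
The plan is to obtain this statement as the mirror image of Theorem~\ref{ThNessMinCondForHcodiff}, that is, as a direct corollary of Theorem~\ref{ThMathProgrMax}. The key observation is that for each $i \in I_0$ the function $\psi_i(\cdot) = h_i(\cdot) + \Psi_i(\cdot)$ is a weak lower $H$-concave approximation of $f_i$ at $x^*$ with $\psi_i(0) = 0$. Granting this, the $H$-concave function $g$ appearing in the statement is precisely the function built from $\psi_0, \dots, \psi_n$ in Theorem~\ref{ThMathProgrMax}, and the desired conclusion is exactly what that theorem delivers.

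To set things up I would recall that weak $H$-codifferentiability of $f_i$ at $x^*$ together with $(\Phi_i, \Psi_i) \in \delta(f_i)_H(x^*)$ gives $\Phi_i(0) + \Psi_i(0) = 0$ and
\[
f_i(x^* + \Delta x) - f_i(x^*) = \Phi_i(\Delta x) + \Psi_i(\Delta x) + o(\Delta x, x^*)
\]
for every admissible increment $\Delta x$, where $o(\alpha \Delta x, x^*)/\alpha \to 0$ as $\alpha \downarrow 0$. Picking $U_i, V_i \subset H$ that generate $\Phi_i$ and $\Psi_i$ respectively, and using $h_i \in \underline{\partial}_H \Phi_i(0) \subset \supp^-(\Phi_i, H)$, we have $h_i(y) \le \Phi_i(y)$ for all $y \in X$ and $h_i(0) = \Phi_i(0)$.

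The verification that $\psi_i = h_i + \Psi_i$ is a weak lower $H$-concave approximation of $f_i$ at $x^*$ with $\psi_i(0) = 0$ then proceeds through four routine points. First, $\psi_i$ is $H$-concave because $H$ is closed under addition, so $h_i + p \in H$ for every $p \in V_i$ and $\psi_i = \inf_{p \in V_i}(h_i + p)$. Second, $\psi_i(0) = h_i(0) + \Psi_i(0) = \Phi_i(0) + \Psi_i(0) = 0$. Third, $0 \in \interior \dom \psi_i$, since $0 \in \interior \dom h_i$ by the standing hypothesis on $H$ and $0 \in \interior \dom \Psi_i$ because $(\Phi_i, \Psi_i) \in PF(X, \overline{E}, H)$; in particular $h_i + \Psi_i$ is well defined and finite on a neighbourhood of $0$, which also settles the bookkeeping of improper values in $\overline{E}$. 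Fourth, from $h_i \le \Phi_i$ and the expansion above,
\[
f_i(x^* + \alpha \Delta x) - f_i(x^*) \ge h_i(\alpha \Delta x) + \Psi_i(\alpha \Delta x) + o(\alpha \Delta x, x^*) = \psi_i(\alpha \Delta x) - \alpha \beta(\alpha),
\]
with $\beta(\alpha) := -o(\alpha \Delta x, x^*)/\alpha \to 0$ as $\alpha \downarrow 0$; admissibility of $\co\{x^*, x^* + \alpha_0 \Delta x\} \subset \Omega \cap \dom \psi_i$ for small $\alpha_0$ is immediate because $\Omega$ is open and $0 \in \interior \dom \psi_i$. This is exactly the connection between $H$-codifferentials and lower $H$-concave approximations noted earlier, specialised to $h_i \in \underline{\partial}_H \Phi_i(0)$.

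Finally I would apply Theorem~\ref{ThMathProgrMax} to problem~(\ref{mathProgSup}) with the family $\psi_0, \dots, \psi_n$: since $x^*$ is a point of local maximum and, by hypothesis, the $H$-concave function $g = \inf\{\psi_0, \psi_1 + f_1(x^*), \dots, \psi_n + f_n(x^*)\}$ is superhomogeneous, Theorem~\ref{ThMathProgrMax} yields that $0$ is a point of global maximum of $g$ on $A - x^*$, which is the assertion. I do not anticipate any real obstacle here: the whole argument is a translation through Theorem~\ref{ThMathProgrMax}, and the only mildly delicate issues --- well-definedness of the sums $h_i + \Psi_i$ in $\overline{E}$ and admissibility of the increments --- are handled exactly as in the proof of Theorem~\ref{ThNessMinCondForHcodiff}.
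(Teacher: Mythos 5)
Your argument is correct and is exactly the derivation the paper intends: the theorem is stated as an ``obvious corollary'' of Theorem~\ref{ThMathProgrMax}, and you supply the only nontrivial ingredient, namely that $h_i \in \underline{\partial}_H \Phi_i(0)$ makes $\psi_i = h_i + \Psi_i$ a weak lower $H$-concave approximation of $f_i$ at $x^*$ with $\psi_i(0)=0$ (this is the earlier proposition linking $H$-codifferentials to lower $H$-concave approximations, with the $H$-subdifferential condition ensuring the normalization $\psi_i(0)=0$). No gaps.
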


In general, upper abstract convex approximations are more convenient for the study of minimization problems, whereas
lower abstract concave approximations are more convenient for the study of maximization problems. Necessary conditions
for a maximum can be expressed in terms of upper abstract convex approximations, although these conditions are much
more cumbersome than the ones stated in theorem \ref{ThMathProgrMax}. 

We need additional notation. Denote
\[
\gamma(x, A) =  \{ g \in X \mid \exists \alpha > 0 \colon x + \alpha g \in A \}
\]
and $\Gamma(x, A) = \cl \gamma(x, A)$. It is easy to see that both $\gamma(x, A)$ and $\Gamma(x, A)$ are nonempty convex
cones.

\begin{theorem} \label{ThNessMaxCondExFamUCA}
Let $\{ \varphi_{\lambda} \}$, $\lambda \in \Lambda$ be an exhaustive family of weak upper $H$-convex approximations
of the function $f_0$ at a point $x^* \in A$, and let $x^*$ be a point of local maximum of the function $f_0$ on the set
$A$. Suppose that for any $g \in \Gamma(x^*, A)$ there exists $\alpha_g > 0$ such that for any $\lambda \in \Lambda$ the
function $\alpha \to \varphi_{\lambda}(\alpha g)$, $\alpha \in [0, \alpha_g)$ is convex. Then for any $\varepsilon > 0$
and $g \in \gamma(x^*, A)$ there exists $\lambda \in \Lambda$ such that
\begin{equation} \label{NessMaxCondUCA}
\varphi'_{\lambda}(0, g) \le \varepsilon,
\end{equation}
where $\varphi'_{\lambda}(0, g)$ is the directional derivative of the function $\varphi_{\lambda}$ at the
origin in the direction $g$. Moreover, if $\Lambda$ is finite, then for any $g \in \gamma(x^*, A)$ there exists 
$\lambda \in \Lambda$  such that
\[
\varphi'_{\lambda}(0, g) \le 0.
\]
If, in addition, for any $\lambda \in \Lambda$ the function $\varphi'_{\lambda}(0, \cdot)$ is continuous on
$\Gamma(x^*, A)$, then for any $g \in \Gamma(x^*, A)$ there exists $\lambda \in \Lambda$ such that the last inequality
holds true.
\end{theorem}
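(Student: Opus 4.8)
The plan is to argue by contradiction along the same lines as Theorem~\ref{ThGenMathProg}, but working with the infimum representation of the increment of $f_0$ provided by the exhaustive family. First I would record the basic setup: since $x^*$ is a local maximum of $f_0$ on $A$ and $\{\varphi_\lambda\}$ is an exhaustive family of weak upper $H$-convex approximations of $f_0$ at $x^*$, we have $\inf_{\lambda}\varphi_\lambda(0)=0$ and, for every admissible $\Delta x$,
\[
f_0(x^*+\Delta x)-f_0(x^*)=\inf_{\lambda\in\Lambda}\varphi_\lambda(\Delta x)+o(\Delta x,x^*).
\]
Because each $\varphi_\lambda$ is a weak upper $H$-convex approximation, one also has the one-sided estimate $f_0(x^*+\alpha g)-f_0(x^*)\le \varphi_\lambda(\alpha g)+\alpha\beta_\lambda(\alpha)$ for small $\alpha$; it is this estimate that carries the contradiction.

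Next I would fix $\varepsilon>0$ and $g\in\gamma(x^*,A)$, so that $x^*+\alpha g\in A$ for all sufficiently small $\alpha>0$, and suppose for contradiction that $\varphi'_\lambda(0,g)>\varepsilon$ for every $\lambda\in\Lambda$. By the assumed convexity of $\alpha\mapsto\varphi_\lambda(\alpha g)$ on $[0,\alpha_g)$, the difference quotient $(\varphi_\lambda(\alpha g)-\varphi_\lambda(0))/\alpha$ is nondecreasing in $\alpha$ and converges down to $\varphi'_\lambda(0,g)$ as $\alpha\downarrow 0$; hence $\varphi_\lambda(\alpha g)-\varphi_\lambda(0)\ge \alpha\varphi'_\lambda(0,g)>\alpha\varepsilon$ for all $\alpha\in(0,\alpha_g)$. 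Taking the infimum over $\lambda$ and using $\inf_\lambda\varphi_\lambda(0)=0$ gives $\inf_\lambda\varphi_\lambda(\alpha g)\ge \alpha\varepsilon$ for small $\alpha>0$ — wait, here is the delicate point: the infimum of $\varphi_\lambda(0)$ need not be attained, so one must be slightly careful and instead conclude $\inf_\lambda\varphi_\lambda(\alpha g)\ge \inf_\lambda(\varphi_\lambda(0)+\alpha\varphi'_\lambda(0,g))\ge \alpha\varepsilon$ using $\varphi_\lambda(0)\ge 0$, which is part of the definition of an upper $H$-convex approximation. Combining with the exhaustive-family representation, $f_0(x^*+\alpha g)-f_0(x^*)\ge \alpha\varepsilon+o(\alpha g,x^*)\ge (\varepsilon/2)\alpha>0$ for all sufficiently small $\alpha>0$, contradicting that $x^*$ is a local maximum. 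This proves \eqref{NessMaxCondUCA}.

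For the refinement when $\Lambda$ is finite: apply the already-proved part with $\varepsilon=1/k$, obtaining $\lambda_k\in\Lambda$ with $\varphi'_{\lambda_k}(0,g)\le 1/k$; since $\Lambda$ is finite, some $\lambda$ occurs infinitely often, whence $\varphi'_\lambda(0,g)\le 0$. For the last assertion, fix $g\in\Gamma(x^*,A)$ and take a sequence $g_n\in\gamma(x^*,A)$ with $g_n\to g$ (possible since $\Gamma(x^*,A)=\cl\gamma(x^*,A)$); for each $n$ pick $\lambda_n\in\Lambda$ with $\varphi'_{\lambda_n}(0,g_n)\le 0$, extract a subsequence along which $\lambda_n\equiv\lambda$ is constant (finiteness of $\Lambda$), and pass to the limit using the assumed continuity of $\varphi'_\lambda(0,\cdot)$ on $\Gamma(x^*,A)$ to get $\varphi'_\lambda(0,g)\le 0$.

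The step I expect to be the main obstacle is the passage from the directional-derivative lower bound on each $\varphi_\lambda(\alpha g)$ to a lower bound on the infimum $\inf_\lambda\varphi_\lambda(\alpha g)$ that survives being inserted into the $o(\cdot)$-relation — precisely because the infimum over $\lambda$ of the $o$-terms is not itself an $o$-term unless one is careful, and because the family need not be uniformly bounded away from its infimum at $0$. Handling this cleanly is why the convexity hypothesis (forcing the difference quotient to dominate its limit, rather than merely approach it) and the normalisation $\varphi_\lambda(0)\ge 0$, $\inf_\lambda\varphi_\lambda(0)=0$ are both essential; the rest is bookkeeping.
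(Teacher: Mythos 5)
Your proof is correct and follows essentially the same route as the paper: contradiction via the monotonicity of the convex difference quotient, giving $\varphi_{\lambda}(\alpha g) \ge \varphi_{\lambda}(0) + \varepsilon\alpha$ for all $\lambda$, then inserting the infimum over $\lambda$ into the exhaustive-family representation to contradict local maximality. The paper leaves the two refinements for finite $\Lambda$ unproved; your pigeonhole and limiting arguments for them are the intended ones (only note that in a non-metrizable $X$ one should approximate points of $\Gamma(x^*,A)$ by nets rather than sequences, which changes nothing since $\Lambda$ is finite).
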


\begin{proof}
As it is well-known, from the convexity of the the function $z_{\lambda, g}(\alpha) = \varphi_{\lambda}(\alpha g)$, 
$\alpha \in [0, \alpha_g)$, $g \in \Gamma(x^*, A)$ it follows that there exists the right
derivative $(z_{\lambda, g})'_+(0)$ and the following equalities hold true
\begin{equation} \label{dirDerivConvFunc}
(z_{\lambda, g})'_+(0) = \lim_{\alpha \downarrow 0} \frac{\varphi(\alpha g) - \varphi(0)}{\alpha} =
\varphi'_{\lambda}(0, g) =
\inf_{\alpha \in (0, \alpha_g)} \frac{\varphi_{\lambda}(\alpha g) - \varphi_{\lambda}(0)}{\alpha}
\end{equation}
(see, e.g.,~\cite{IoffeTihomirov}, proposition 4.1.3).

Suppose that there exists $\varepsilon > 0$ and $g \in \gamma(x^*, A)$ such that inequality (\ref{NessMaxCondUCA})
does not hold true for any $\lambda \in \Lambda$. Applying (\ref{dirDerivConvFunc}) one gets
\begin{equation} \label{UCAnotMax}
\varphi_{\lambda}(\alpha g) \ge \varphi_{\lambda}(0) + \varepsilon \alpha \quad 
\forall \alpha \in [0, \alpha_g) \quad \forall \lambda \in \Lambda.
\end{equation}
Taking into account the facts that $g \in \gamma(x^*, A)$ and the set $A$ is convex, one can suppose 
that $\co \{x^*, x^* + \alpha_g g\} \subset A$.

By the definition of exhaustive family of weak upper $H$-convex approximations one has
$\inf_{\lambda \in \Lambda} \varphi_{\lambda}(0) = 0$, and there exists $\delta > 0$ such that
\[
f_0(x^* + \alpha g) - f_0(x^*) \ge \inf_{\lambda \in \Lambda} \varphi_{\lambda}(\alpha g) - 
\frac{\varepsilon}{2} \alpha \quad \forall \alpha \in (0, \delta).
\]
Thus, taking into account (\ref{UCAnotMax}) one has
\[
f_0(x^* + \alpha g) - f_0(x^*) \ge \inf_{\lambda \in \Lambda} \varphi_{\lambda}(0) + \frac{\varepsilon}{2} \alpha = 
\frac{\varepsilon}{2} \alpha \quad \forall \alpha \in (0, \min\{ \delta, \alpha_g \}),
\]
which contradicts the fact that $x^*$ is a point of local maximum of $f_0$ on $A$.
\end{proof}

\begin{remark}
(i) {An analogous theorem about necessary conditions for a minimum in terms of weak lower abstract concave
approximations also holds true.
}

\noindent(ii) {One can construct a numerical method for finding stationary points of an $H$-codifferentiable function
(as well as a numerical method for finding a solution of the equation $F(x) = 0$, where $F$ is $H$-codifferentiable)
based on the method for the search of a local minimizer of a nonsmooth function having a continuous approximation
(see~\cite{RubinovZaff, Zaffaroni}).
}
\end{remark}

\subsection{Necessary conditions for an extremum of abstract quasidifferentiable function}

Let us consider necessary optimality conditions for $H$-quasidifferentiable functions. We only discuss necessary
conditions for a minimum, since necessary conditions for a maximum are symmetrical to them. All necessary optimality
conditions stated below immediately follows from the necessary conditions for an extremum of a directionally
differentiable function. Therefore we omit the proofs.

Let all functions $h \in H$ be p.h., and, as earlier, suppose that $f_0 \colon \Omega \to \mathbb{R}$ is an arbitrary
function, $A \subset \Omega$ is a nonempty convex set.

\begin{theorem} \label{ThHquasidMinCond}
Let the function $f_0$ be Dini (Hadamard) $H$-quasidifferentiable at a point $x^* \in A$. Suppose that $x^*$ is a
point of local minimum of the function $f_0$ on the set $A$. Then for any $(\Phi, \Psi) \in \mathcal{D}_H f_0(x^*)$ and
for all $p \in \supp^+(\Psi, H)$ the function $\Phi + p$ attains a global minimum value on the set $\gamma(x^*, A)$ 
($\Gamma(x^*, A)$) at the origin. Also, for any $U \subset H$ such that $\Phi = \sup_{h \in U} h$,
for any $\varepsilon > 0$ and for all $g \in \gamma(x^*, A)$ ($g \in \Gamma(x^*, A)$) there exists $h \in U$ such that
$h(g) + \Psi(g) \ge - \varepsilon$. Moreover, if there exists $U \subset H$ such that
\begin{enumerate}
\item{$\Phi$ is generated by $U$,}
\item{for any $x \in X$ there exists $h \in U$ such that $\Phi(x) = h(x)$ (in particular, if $U$ is finite)}
\end{enumerate}
then for any $g \in \gamma(x^*, A)$ ($g \in \Gamma(x^*, A)$) there exists $h \in U$ such that $h(g) + \Psi(g) \ge 0$.
\end{theorem}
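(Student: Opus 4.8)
The plan is to reduce everything to the classical necessary optimality condition for a Dini (Hadamard) directionally differentiable function, which states that if $x^*$ is a local minimizer of $f_0$ on a convex set $A$, then $f_0'(x^*, g) \ge 0$ for every feasible direction $g \in \gamma(x^*, A)$ (and, in the Hadamard case, for every $g \in \Gamma(x^*, A)$, using that the Hadamard derivative is continuous in $g$). This is the result the paper says it will take for granted (``All necessary optimality conditions stated below immediately follows from the necessary conditions for an extremum of a directionally differentiable function'').

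First I would fix a representation: by definition of Dini (Hadamard) $H$-quasidifferentiability of $f_0$ at $x^*$ and by the definition of $\mathcal{D}_H f_0(x^*)$, for any $(\Phi, \Psi) \in \mathcal{D}_H f_0(x^*)$ one has $f_0'(x^*, g) = \Phi(g) + \Psi(g)$ for all $g \in X$. Combining this with $f_0'(x^*, g) \ge 0$ on $\gamma(x^*, A)$ gives $\Phi(g) + \Psi(g) \ge 0 = \Phi(0) + \Psi(0)$ for all such $g$ (recall every $h \in H$ is positively homogeneous, so $\Phi$ and $\Psi$ vanish at the origin). Now for the first assertion: if $p \in \supp^+(\Psi, H)$, then $p \ge \Psi$ pointwise, hence $\Phi(g) + p(g) \ge \Phi(g) + \Psi(g) \ge 0$ for every $g \in \gamma(x^*, A)$, while $\Phi(0) + p(0) = 0$; thus $\Phi + p$ attains its global minimum over $\gamma(x^*, A)$ at the origin. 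In the Hadamard case the directional derivative is continuous in $g$, so the inequality $\Phi + \Psi \ge 0$ extends by continuity from $\gamma(x^*, A)$ to $\Gamma(x^*, A) = \cl \gamma(x^*, A)$; one must observe that $\Phi + p = f_0'(x^*, \cdot) + (p - \Psi)$ inherits lower semicontinuity appropriately, or simply note $\Phi(g) + p(g) \ge f_0'(x^*, g) \ge 0$ on $\Gamma(x^*, A)$ by continuity of $f_0'(x^*, \cdot)$ and $p \ge \Psi$.

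Next, for the ``$\varepsilon$'' assertion: write $\Phi = \sup_{h \in U} h$. Given $g \in \gamma(x^*, A)$ (or $\Gamma(x^*, A)$ in the Hadamard case) and $\varepsilon > 0$, since $\Phi(g) + \Psi(g) \ge 0$ and $\Phi(g)$ is the supremum of $h(g)$ over $h \in U$, there is $h \in U$ with $h(g) > \Phi(g) - \varepsilon \ge -\Psi(g) - \varepsilon$, i.e. $h(g) + \Psi(g) \ge -\varepsilon$. Finally, under the extra hypothesis that $\Phi$ is generated by $U$ and the supremum defining $\Phi(g)$ is attained for every $g$ (automatic when $U$ is finite), pick $h \in U$ with $h(g) = \Phi(g)$; then $h(g) + \Psi(g) = \Phi(g) + \Psi(g) = f_0'(x^*, g) \ge 0$.

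The only genuinely delicate point is the passage from $\gamma(x^*, A)$ to $\Gamma(x^*, A)$ in the Hadamard case, where one must invoke the continuity of the Hadamard directional derivative $f_0'(x^*, \cdot)$ on $X$ (which follows from the $H$-quasidifferentiable representation together with the standard fact, used earlier in the paper in the proof of the proposition relating strong $H$-codifferentiability and Hadamard $H$-quasidifferentiability, that a Hadamard directional derivative is continuous in the direction) and then the fact that all the inequalities involved pass to the closure; everything else is a direct unwinding of definitions plus the classical first-order condition, so no substantial new argument is required.
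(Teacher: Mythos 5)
Your proof is correct and follows exactly the route the paper indicates (the paper omits the proof, noting only that it ``immediately follows from the necessary conditions for an extremum of a directionally differentiable function''): you combine $f_0'(x^*,g)=\Phi(g)+\Psi(g)\ge 0$ on $\gamma(x^*,A)$ with $p\ge\Psi$, the definition of the supremum, and continuity of the Hadamard directional derivative for the passage to $\Gamma(x^*,A)$. No gaps; the argument matches the intended one.
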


\begin{corollary}
Suppose that all assumption of the previous theorem are satisfied, $x^* \in \interior A$, and let $0 \in H$. Then for
any $(\Phi, \Psi) \in \mathcal{D}_H f_0(x^*)$ and for all $p \in \supp^+(\Psi, H)$ one has 
$0 \in \underline{\partial}_H (\Phi + p)(0)$.
\end{corollary}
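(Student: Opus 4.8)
The plan is to obtain this as an immediate specialization of Theorem~\ref{ThHquasidMinCond}. First I would note that, since $x^* \in \interior A$, for every $g \in X$ there is $\alpha > 0$ with $x^* + \alpha g \in A$ (and trivially $x^* = x^* + \alpha \cdot 0 \in A$), so $\gamma(x^*, A) = X$, and consequently $\Gamma(x^*, A) = \cl X = X$ as well. Hence in both the Dini and the Hadamard settings the cone appearing in the conclusion of Theorem~\ref{ThHquasidMinCond} is the whole space $X$.

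Next I would recall that under the standing hypothesis every $h \in H$ is positively homogeneous of degree one, so the same is true of any $H$-convex or $H$-concave function; letting $\alpha \downarrow 0$ in the identity $h(\alpha \cdot 0) = \alpha h(0)$ forces $h(0) = 0$ for every such function. In particular $\Phi(0) = 0$, $p(0) = 0$, and the sum $(\Phi + p)(0) = 0$ is well defined. By Theorem~\ref{ThHquasidMinCond}, for the given $(\Phi, \Psi) \in \mathcal{D}_H f_0(x^*)$ and any $p \in \supp^+(\Psi, H)$ the function $\Phi + p$ attains its global minimum over $\gamma(x^*, A) = X$ at the origin, i.e. $\Phi(g) + p(g) \ge (\Phi + p)(0) = 0$ for all $g \in X$. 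Since $0 \in H$, the (positively homogeneous) constant function $h \equiv 0$ lies in $H$, satisfies $h \le \Phi + p$ by the inequality just established, and fulfils $h(0) = 0 = (\Phi + p)(0)$; by the definition of the $H$-subdifferential this says exactly that $0 \in \underline{\partial}_H(\Phi + p)(0)$.

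There is no real obstacle here beyond two routine points: the identity $\gamma(x^*, A) = X$ (respectively $\Gamma(x^*, A) = X$), which uses nothing but $x^* \in \interior A$, and the vanishing at the origin of positively homogeneous functions of degree one, which is what guarantees that $\Phi + p$ is well defined and equal to $0$ there and that the zero constant is an admissible minorant. All the analytic content is already carried by Theorem~\ref{ThHquasidMinCond}; the corollary is merely its reading in the interior-point case, where the relevant cone degenerates to the whole space and global minimality of $\Phi+p$ at the origin coincides with the minorant condition defining membership in $\underline{\partial}_H(\Phi+p)(0)$.
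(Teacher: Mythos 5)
Your proof is correct and is exactly the derivation the paper intends (the paper omits the proof as immediate): $x^*\in\interior A$ gives $\gamma(x^*,A)=\Gamma(x^*,A)=X$, positive homogeneity gives $(\Phi+p)(0)=0$, and then global minimality of $\Phi+p$ over $X$ at the origin from Theorem~\ref{ThHquasidMinCond} is precisely the statement that the zero function (which lies in $H$) belongs to $\underline{\partial}_H(\Phi+p)(0)$. No issues.
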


\begin{remark}
Applying theorems \ref{ThNessMinCondForHcodiff} and \ref{ThHquasidMinCond} for different paricular sets $H$ one can
easily obtain well-known necessary optimality conditions for codifferentiable and quasidifferentiable
functions, and for functions having upper (lower) exhauster or upper (lower) coexhauster 
\cite{DemRub, Demyanov, Abankin, DemAbbasov}.
\end{remark}

\subsection{Some particular cases}\label{PartOptimCond}

Let us consider how general necessary optimality conditions for $H$-codifferentiable functions can be easily transformed
into more convenient conditions in some particular cases. In this subsection $X$ is a real Banach space, 
$E = \mathbb{R}$, $A \subset \Omega$ is a nonvoid closed convex set. Note, that if the set $H$ is closed under vertical
shifts then, without loss of generality, we may assume that for any weakly $H$-codifferentiable function $f$
and for all $(\Phi, \Psi) \in \delta f_H$ one has $\Phi(0) = \Psi(0) = 0$.

Let $f, f_i \colon X \to \mathbb{R}$ be arbitrary functions, $i \in I_0 = \{ 0 \} \cup I$, where 
$I = \{ 1, \ldots, n \}$. For any $x \in X$ denote $R(x) = \{ 0 \} \cup \{ i \in I \mid f_i(x) = 0 \}$.

\begin{example}
Let $H$ coincide with the set of all continuous affine function $h \colon X \to \mathbb{R}$. Then, as it was shown in
example \ref{ExampleCodiff}, the function $f$ is weakly $H$-codifferentiable at a point $x \in \Omega$ iff $f$ is
codifferentiable at this point.

Let us derive necessary optimality conditions for a codifferentiable function in the problem with smooth
equality and codifferentiable inequality constraints.

\begin{proposition} \label{PrGenNCMCodiff}
Let $Y$ be a Banach space, a mapping $F \colon X \to Y$ be continuously Fr\'echet differentiable at a point $x^* \in X$,
the functions $f_i$ be Fr\'echet codifferentiable at a point $x^*$, $i \in I_0$. Suppose that the Fr\'echet derivative
$F'[x^*]$ of the map $F$ at $x^*$ is surjective, and $x^*$ is a point of local minimum in the problem
\[
f_0(x) \to \inf, \quad F(x) = 0, \quad f_i(x) \le 0, \quad i \in I.
\]
Then for any $(0, q_i) \in \overline{d} f_i(x^*)$, $i \in R(x^*)$ there exists $y^* \in Y^*$ such that
\[
(0, y^* \circ F'[x^*]) \in \Big( \co \bigcup_{i \in R(x^*)} (\underline{d} f_i(x^*) + \{ (0, q_i) \}) \Big).
\]
\end{proposition}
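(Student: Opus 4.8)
The plan is to apply the general necessary condition of Theorem~\ref{ThGenMathProg} with the feasible set $M = \{x \in X \mid F(x) = 0\}$, to identify the contingent cone $T_M(x^*)$ with $\Ker F'[x^*]$ by Lyusternik's theorem, and then to carry out a standard convex‑analytic computation of the subdifferential at the origin of the resulting max‑function. First I would set up the approximations. By Example~\ref{ExampleCodiff}, Fr\'echet codifferentiability of $f_i$ at $x^*$ is precisely strong $H$‑codifferentiability for $H$ the set of all continuous affine functions on $X$; since $H$ is closed under vertical shifts, one may choose codifferentials $\underline{d} f_i(x^*),\overline{d} f_i(x^*)\subset\mathbb{R}\times X^*$ (convex, bounded, and compact in $\tau\times w^*$) with $\max_{(a,v)\in\underline{d} f_i(x^*)}a=\min_{(b,w)\in\overline{d} f_i(x^*)}b=0$, so that the functions $\Phi_i(\Delta x)=\max_{(a,v)\in\underline{d} f_i(x^*)}(a+v(\Delta x))$ and $\Psi_i(\Delta x)=\min_{(b,w)\in\overline{d} f_i(x^*)}(b+w(\Delta x))$ form a representative of the Fr\'echet $H$‑derivative of $f_i$ at $x^*$ with $\Phi_i(0)=\Psi_i(0)=0$. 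For the given $(0,q_i)\in\overline{d} f_i(x^*)$ set $p_i=q_i(\cdot)\in H$; then $\Psi_i\le p_i$ and $p_i(0)=0$, hence $p_i\in\supp^+(\Psi_i,H)$, and by the proposition of the subsection on abstract convex approximations that relates $H$‑codifferentials with upper $H$‑convex approximations the function $\varphi_i:=\Phi_i+p_i$ is a strong upper $H$‑convex approximation of $f_i$ at $x^*$ with $\varphi_i(0)=0$. Boundedness of $\underline{d} f_i(x^*)$ makes $\Phi_i$, and hence $\varphi_i$, Lipschitz.

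\emph{Application of Theorem~\ref{ThGenMathProg} and reduction to active constraints.} The function $g(x)=\sup\{\varphi_0(x),\varphi_1(x)+f_1(x^*),\dots,\varphi_n(x)+f_n(x^*)\}$ is a supremum of finitely many convex functions, hence convex, with $g(0)=\max\{0,f_1(x^*),\dots,f_n(x^*)\}=0$ because $f_i(x^*)\le 0$; therefore $g$ is subhomogeneous. Since $x^*$ is a local solution of problem (\ref{MathProgGenUCA}) with $M=F^{-1}(0)$, Theorem~\ref{ThGenMathProg} yields that $0$ is a point of global minimum of $g$ on $T_M(x^*)$; and since $F$ is continuously Fr\'echet differentiable at $x^*$ with $F'[x^*]$ surjective, Lyusternik's theorem (see, e.g., \cite{AubinFrankowska}) gives $T_M(x^*)=\Ker F'[x^*]=:K$, a closed linear subspace. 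For $i\in I\setminus R(x^*)$ one has $f_i(x^*)<0$, while $\varphi_i(\Delta x)+f_i(x^*)\to f_i(x^*)$ and $g(\Delta x)\ge\varphi_0(\Delta x)\to 0$ as $\Delta x\to 0$, so in a neighbourhood of the origin $g$ coincides with the convex function $\widehat{g}(\Delta x):=\max_{i\in R(x^*)}\varphi_i(\Delta x)$, and hence $\partial g(0)=\partial\widehat{g}(0)$.

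\emph{Subdifferential computation and conclusion.} Put $\widehat{C}=\co\bigl(\bigcup_{i\in R(x^*)}(\underline{d} f_i(x^*)+\{(0,q_i)\})\bigr)$, which is convex and compact in $\tau\times w^*$ as the convex hull of a finite union of such sets. Then $\widehat{g}(\Delta x)=\max_{(a,v)\in\widehat{C}}(a+v(\Delta x))$ for all $\Delta x$, with $\max_{(a,v)\in\widehat{C}}a=\max_i\Phi_i(0)=0=\widehat{g}(0)$, and by the Ioffe--Tikhomirov formula (\cite{IoffeTihomirov}) for the subdifferential of a supremum of continuous affine functions over a compact index set, $\partial g(0)=\partial\widehat{g}(0)=\{v\in X^*\mid(0,v)\in\widehat{C}\}$, a convex $w^*$‑compact set. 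Since $g$ is finite, convex and continuous on $X$ and $0$ minimizes $g$ over the closed subspace $K$, the Moreau--Rockafellar sum rule applied to $g$ and the indicator function of $K$ gives $0\in\partial g(0)+K^\perp$, i.e.\ $\partial g(0)\cap K^\perp\ne\emptyset$ because $K^\perp$ is a linear subspace. Thus there is $v$ with $(0,v)\in\widehat{C}$ and $v\in K^\perp=(\Ker F'[x^*])^\perp$; by the closed range theorem (using surjectivity of $F'[x^*]$) one has $(\Ker F'[x^*])^\perp=\{y^*\circ F'[x^*]\mid y^*\in Y^*\}$, so $v=y^*\circ F'[x^*]$ for some $y^*\in Y^*$ and therefore $(0,y^*\circ F'[x^*])=(0,v)\in\widehat{C}=\co\bigl(\bigcup_{i\in R(x^*)}(\underline{d} f_i(x^*)+\{(0,q_i)\})\bigr)$, which is the claimed inclusion.

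\emph{Main obstacle.} The routine parts are checking that each $\varphi_i$ is a strong upper $H$‑convex approximation of $f_i$ at $x^*$ and that $g$ is subhomogeneous. The delicate steps are the identification $T_M(x^*)=\Ker F'[x^*]$, which rests on Lyusternik's theorem together with surjectivity of $F'[x^*]$, and the convex‑analytic passage at the end: the weak$^*$ version of the Danskin/Ioffe--Tikhomirov subdifferential formula (which requires the $\tau\times w^*$‑compactness of the hypodifferentials and their boundedness), the subdifferential sum rule over the subspace $K$, and the closed range theorem describing $(\Ker F'[x^*])^\perp$. I expect the subdifferential formula for $\widehat g$ in the weak$^*$ topology to be the point requiring the most care.
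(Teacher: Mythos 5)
Your proposal is correct and follows essentially the same route as the paper's proof: the same upper $H$-convex approximations $\varphi_i=\Phi_i+q_i$, Theorem~\ref{ThGenMathProg} on $M=F^{-1}(0)$, Lyusternik's theorem (the paper only needs the inclusion $\Ker F'[x^*]\subset T_M(x^*)$, not the equality you state, but both hold), the convex subdifferential calculus for the max over active indices, and the identification of $(\Ker F'[x^*])^{\perp}$ with the range of the adjoint. The only cosmetic difference is that you package the subdifferential of the max into a single $\tau\times w^*$-compact convex hull $\widehat{C}$ rather than computing $\underline{\partial}g_i(0)$ index by index as the paper does.
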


\begin{proof}
Fix an arbitrary $(0, q_i) \in \overline{d} f_i(x^*)$, $i \in I_0$ and define
\[
\varphi_i(x) = \max_{(a, p) \in \underline{d} f_i(x^*) + (0, q_i)} ( a + p(x) ) \quad \forall x \in X, 
\forall i \in I_0.
\]
It is easy to check that the function $\varphi_i$ is a strong upper $H$-convex approximation of $f_i$ at $x^*$,
$\varphi_i(0) = 0$ and $\varphi_i$ is Lipschitz continuous in a neighbourhood of zero 
(\cite{Zalinescu}, corollary 2.2.12), $i \in I_0$. 

Denote $M = \{ x \in X \mid F(x) = 0 \}$. By virtue of theorem \ref{ThGenMathProg} one has that $0$ is a point of
global minimum of the convex function
\[
g(\cdot) = \max\{ \varphi_0(\cdot), \varphi_1(\cdot) + f_1(x^*), \ldots, \varphi_n(\cdot) + f_n(x^*) \}
\]
on the set $T_M (x^*)$. Taking into account the Lusternik theorem (see~\cite{IoffeTihomirov}, section 0.2) one has that
$\Ker F'[x^*] \subset T_M(x^*)$, where $\Ker F'[x^*]$ is the kernel of the linear operator $F'[x^*]$. Therefore,
applying the necessary and sufficient condition for a minimum of a convex function on a closed convex set 
(\cite{IoffeTihomirov}, theorem 1.1.2${}'$) and the theorem about the subdifferential of the maximum of a finite family
of convex functions (\cite{Zalinescu}, corollary 2.8.11), one gets
\[
\underline{\partial} g(x^*) \cap ( - N(0, \Ker F'[x^*])) \ne \emptyset, \quad 
\underline{\partial} g(x^*) = \co \bigcup_{i \in R(x^*)} \underline{\partial} g_i(x^*),
\]
where $\underline{\partial} g(x^*)$ is the subdifferential of the convex function $g$ at $x^*$ and 
$N(0, \Ker F'[x^*]) = \{ p \in X^* \mid p(x) \le 0 \; \forall x \in \Ker F'[x^*] \}$ is the normal cone to
the set $\Ker F'[x^*]$ at the point $0$. By virtue of the theorem about the subdifferential of the supremum
(\cite{IoffeTihomirov}, theorem 4.2.3) one has 
$\{ 0 \} \times \underline{\partial} g_i(x^*) \subset \underline{d} f_i(x^*) + \{ (0, q_i) \}$. Hence
\[
\Big( \co \bigcup_{i \in R(x^*)} (\underline{d} f_i(x^*) + \{ (0, q_i) \}) \Big) \cap 
\big( \{ 0 \} \times (- N(0, \Ker F'[x^*])) \big) \ne \emptyset
\]
It remains to note that $N(0, \Ker F'[x^*])$, as the annihilator of the subspace $\Ker F'(x^*)$, coincides with the
image of the adjoint operator of $F'[x^*]$ (\cite{HutsonPym}, theorem 6.5.10), i.~e. for any 
$p \in N(0, \Ker F'[x^*])$ there exists $y^* \in Y^*$ such that $p = y^* \circ F'[x^*]$.
\end{proof}
\end{example}

\begin{example}
Let $H$ consist of all proper l.s.c. convex functions $h \colon X \to \overline{\mathbb{R}}$ such that 
$0 \in \interior \dom h$. Let us recall that in this case if there exists an upper coexhauster of the function $f$ at
the point $x \in \Omega$, then the function $f$ is weakly $H$-hyperdifferentiable at this point (cf.~example
\ref{ExampleCoexhauster}).

Arguing in a similar way to the proof of proposition \ref{PrGenNCMCodiff} one can get the following result.

\begin{proposition} \label{PrGenNCMCoexhauster}
Let $Y$ be a Banach space, a mapping $F \colon X \to Y$ be continuously Fr\'echet differentiable at a point $x^* \in X$.
Suppose that there exist Fr\'echet upper coexhausters $\overline{E}_i (x^*)$ of the functions $f_i$ at a point $x^*$,
$i \in I_0$. Suppose also that the operator $F'[x^*]$ is surjective, and $x^*$ is a point of local minimum in 
the problem
\[
f_0(x) \to \inf, \quad F(x) = 0, \quad f_i(x) \le 0, \quad i \in I.
\]
Then for any $C_i \in \overline{e}_i(x^*)$, $i \in R(x^*)$ there exists $y^* \in Y^*$ such that
\[
(0, y^* \circ F'[x^*]) \in \Big( \co \bigcup_{i \in R(x^*)} C_i \Big).
\]
where $\overline{e}_i(x^*) = \{ C \in \overline{E}_i (x^*) \mid \max_{(a, p) \in C} a = 0 \}$.
\end{proposition}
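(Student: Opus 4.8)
The plan is to imitate almost verbatim the proof of Proposition~\ref{PrGenNCMCodiff}, the only change being that the set $\underline{d}f_i(x^*)+\{(0,q_i)\}$ used there is replaced by the chosen coexhauster component $C_i$. First I would note that if $\overline{e}_i(x^*)=\emptyset$ for some $i\in R(x^*)$ there is nothing to prove, and that for $i\in I_0\setminus R(x^*)$ one has $f_i(x^*)<0$, so these constraints are locally inactive and may be dropped; hence it suffices to work with $i\in R(x^*)$ and the given $C_i\in\overline{e}_i(x^*)$. For each such $i$ put $\varphi_i(\cdot)=\max_{(a,p)\in C_i}(a+p(\cdot))$. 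Since $C_i$ is nonempty, convex, bounded and compact in $\tau\times w^*$, the function $\varphi_i$ is a finite continuous convex function, Lipschitz in a neighbourhood of zero, with $0\in\interior\dom\varphi_i$ and $\varphi_i(0)=\max_{(a,p)\in C_i}a=0$ by the defining property of $\overline{e}_i(x^*)$. Because $\overline{E}_i(x^*)$ is a Fr\'echet upper coexhauster (cf.~example~\ref{ExampleCoexhauster}), it is generated by some $U_i$ with $(U_i,\{0\})\in D^s_H f_i(x^*)$ and $C_i=A_{h_i}$, $h_i\in U_i$, with $\varphi_i=h_i$; consequently, writing the Fr\'echet $H$-hyperdifferential representation of $f_i$ at $x^*$ and using $\inf_{h\in U_i}h\le h_i$, for every admissible $\Delta x$
\[
f_i(x^*+\Delta x)-f_i(x^*)=\inf_{h\in U_i}h(\Delta x)+o(\Delta x,x^*)\le\varphi_i(\Delta x)+o(\Delta x,x^*),\qquad o(\Delta x,x^*)/\|\Delta x\|\to 0,
\]
so $\varphi_i$ is a strong upper $H$-convex approximation of $f_i$ at $x^*$.

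Next I would form the $H$-convex function $g(\cdot)=\max\{\varphi_0(\cdot),\varphi_1(\cdot)+f_1(x^*),\ldots,\varphi_n(\cdot)+f_n(x^*)\}$, observe that $g(0)=0$ (the inactive terms have value $f_i(x^*)<0$ at $0$, the active ones value $0$), and that $g$, being finite convex with $g(0)=0$, is Lipschitz near zero and subhomogeneous. Setting $M=\{x\in X\mid F(x)=0\}$, Theorem~\ref{ThGenMathProg} applied to the problem of the statement gives that $0$ is a point of global minimum of $g$ on the contingent cone $T_M(x^*)$. Since $F$ is continuously Fr\'echet differentiable at $x^*$ and $F'[x^*]$ is surjective, the Lusternik theorem yields $\Ker F'[x^*]\subset T_M(x^*)$, hence $0$ minimizes the finite continuous convex function $g$ over the closed subspace $\Ker F'[x^*]$.

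It remains to turn this optimality into the asserted membership. The necessary and sufficient condition for a minimum of a convex function over a closed convex set (\cite{IoffeTihomirov}, theorem~1.1.2${}'$) gives $\underline{\partial}g(0)\cap(-N(0,\Ker F'[x^*]))\ne\emptyset$. The theorem on the subdifferential of the maximum of a finite family of convex functions (\cite{Zalinescu}, corollary~2.8.11) gives $\underline{\partial}g(0)=\co\bigcup_{i\in R(x^*)}\underline{\partial}\varphi_i(0)$, since exactly the indices $i$ with $f_i(x^*)=0$, i.e.\ $i\in R(x^*)$, are active at $0$; and the theorem on the subdifferential of the supremum (\cite{IoffeTihomirov}, theorem~4.2.3) gives $\{0\}\times\underline{\partial}\varphi_i(0)\subset C_i$, whence $\{0\}\times\underline{\partial}g(0)\subset\co\bigcup_{i\in R(x^*)}C_i$. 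Finally $N(0,\Ker F'[x^*])$, being the annihilator of the subspace $\Ker F'[x^*]$, coincides with the image of the adjoint of $F'[x^*]$ (\cite{HutsonPym}, theorem~6.5.10) because $F'[x^*]$ is surjective; so choosing $p^*\in\underline{\partial}g(0)\cap(-N(0,\Ker F'[x^*]))$ we obtain $y^*\in Y^*$ with $p^*=y^*\circ F'[x^*]$, and therefore $(0,y^*\circ F'[x^*])=(0,p^*)\in\co\bigcup_{i\in R(x^*)}C_i$.

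I expect the only slightly delicate points to be the normalisation bookkeeping — the requirement $\varphi_i(0)=0$ for the active indices, needed so that $g(0)=0$ and Theorem~\ref{ThGenMathProg} applies, which is precisely what the restriction $C_i\in\overline{e}_i(x^*)$ provides — and the inclusion $\{0\}\times\underline{\partial}\varphi_i(0)\subset C_i$, which uses that $C_i$ is convex and closed in $\tau\times w^*$ so that the face of $C_i$ cut out by the functional $(a,p)\mapsto a$ at its maximal value $0$ remains inside $C_i$. Everything else is a direct transcription of the argument of Proposition~\ref{PrGenNCMCodiff}.
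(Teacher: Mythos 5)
Your proof is correct and follows exactly the route the paper intends: the paper's own ``proof'' of this proposition is the single remark that one argues as in Proposition~\ref{PrGenNCMCodiff}, and your write-up is a faithful elaboration of that analogy (replace $\underline{d}f_i(x^*)+\{(0,q_i)\}$ by $C_i$, check $\varphi_i$ is a Lipschitz strong upper $H$-convex approximation with $\varphi_i(0)=0$, then run Theorem~\ref{ThGenMathProg}, Lusternik, the convex optimality condition, the max- and sup-subdifferential formulas, and the annihilator--adjoint identity). The two points you flag as delicate --- the normalisation $\varphi_i(0)=0$ supplied by $C_i\in\overline{e}_i(x^*)$, and the inclusion $\{0\}\times\underline{\partial}\varphi_i(0)\subset C_i$ via the compactness and convexity of $C_i$ --- are indeed the only places where the transcription is not literal, and you handle both correctly.
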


Let us obtain necessary conditions for a maximum in terms of a lower coexhauster.

\begin{proposition}
Suppose that there exists an upper coexhauster $\overline{E}(x^*)$ of the function $f$ at a point $x^* \in A$, and let
$x^*$ be a point of local maximum of the function $f$ on the set $A$. Then for any $\varepsilon > 0$ and 
$g \in \gamma(x^*, A)$ there exists $C \in \overline{E}(x^*)$ such that $p(g) \le \varepsilon$ for all $(0, p) \in C$.
Moreover, if the family $\overline{E}(x^*)$ is finite, then for any $g \in \Gamma(x^*, A)$ there exists 
$C \in \overline{E}(x^*)$ such that $p(g) \le 0$ for all $(0, p) \in C$.
\end{proposition}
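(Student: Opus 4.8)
The plan is to extract from example~\ref{ExampleCoexhauster} the first‑order expansion supplied by an upper coexhauster and then deduce both inequalities directly from local maximality; this makes the statement a special case of theorem~\ref{ThNessMaxCondExFamUCA}, but a self‑contained argument is shorter. Recall that, with $H$ the set of all proper lower semicontinuous convex functions $h\colon X\to\overline{\mathbb{R}}$ with $0\in\interior\dom h$, the existence of an upper coexhauster $\overline{E}(x^*)=\{C\}$ of $f$ at $x^*$ means that $f$ is weakly $H$-hyperdifferentiable at $x^*$ and, for every admissible argument increment $\Delta x\in X$, one has $f(x^*+\Delta x)-f(x^*)=\inf_{C\in\overline{E}(x^*)}\varphi_C(\Delta x)+o(\Delta x,x^*)$, where $\varphi_C(\cdot)=\max_{(a,p)\in C}(a+p(\cdot))$ and $o(\alpha\Delta x,x^*)/\alpha\to 0$ as $\alpha\downarrow 0$. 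Each $\varphi_C$ is a finite continuous $H$-convex function, and $\alpha\mapsto\varphi_C(\alpha g)$ is convex on $[0,+\infty)$ for every $g$, so $\{\varphi_C\}$, $C\in\overline{E}(x^*)$, is an exhaustive family of weak upper $H$-convex approximations of $f$ at $x^*$ satisfying the hypotheses of theorem~\ref{ThNessMaxCondExFamUCA}.

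For the first assertion I would fix $\varepsilon>0$ and $g\in\gamma(x^*,A)$ and choose $\alpha_1>0$ with $x^*+\alpha_1 g\in A$. Since $A$ is convex with $x^*\in A$ and $x^*$ is a point of local maximum of $f$ on $A$, there is $\alpha_2\in(0,\alpha_1]$ such that for all $\alpha\in(0,\alpha_2]$ the increment $\alpha g$ is admissible, $x^*+\alpha g\in A$, and $f(x^*+\alpha g)\le f(x^*)$. Writing $o(\alpha g,x^*)=\alpha\beta(\alpha)$ with $\beta(\alpha)\to 0$ as $\alpha\downarrow 0$, the expansion gives $\inf_{C\in\overline{E}(x^*)}\varphi_C(\alpha g)\le-\alpha\beta(\alpha)\le\alpha|\beta(\alpha)|$ for all $\alpha\in(0,\alpha_2]$. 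Picking $\alpha\in(0,\alpha_2]$ with $|\beta(\alpha)|<\varepsilon$ yields $\inf_{C}\varphi_C(\alpha g)<\varepsilon\alpha$, hence some $C\in\overline{E}(x^*)$ satisfies $\max_{(a,p)\in C}(a+\alpha p(g))=\varphi_C(\alpha g)<\varepsilon\alpha$. In particular $a+\alpha p(g)<\varepsilon\alpha$ for every $(a,p)\in C$; specialising to the elements with $a=0$ and dividing by $\alpha>0$ gives $p(g)<\varepsilon$ for all $(0,p)\in C$, which is the required inequality.

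For the second assertion I would first treat $g\in\gamma(x^*,A)$: applying the first part with $\varepsilon=1/k$, $k\in\mathbb{N}$, produces $C_k\in\overline{E}(x^*)$ with $p(g)\le 1/k$ for all $(0,p)\in C_k$, and since $\overline{E}(x^*)$ is finite some $C$ equals $C_k$ for infinitely many $k$, whence $p(g)\le 0$ for all $(0,p)\in C$. For $g\in\Gamma(x^*,A)=\cl\gamma(x^*,A)$ I would take $g_n\in\gamma(x^*,A)$ with $g_n\to g$, obtain $C_n\in\overline{E}(x^*)$ with $p(g_n)\le 0$ for all $(0,p)\in C_n$, use finiteness of $\overline{E}(x^*)$ to fix a $C$ coinciding with $C_n$ along a subsequence $n_j$, and pass to the limit using that every $p\in X^*$ is norm continuous, so $p(g)=\lim_j p(g_{n_j})\le 0$ for all $(0,p)\in C$.

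The argument is essentially elementary, so I do not expect a real obstacle; the only spots requiring a little care are the consistent use of the Dini-type remainder $o(\alpha\Delta x,x^*)/\alpha\to 0$ rather than a Fréchet-type one, and the two limiting arguments in the finite case, where finiteness of $\overline{E}(x^*)$ must be exploited through the pigeonhole principle to keep the selected set $C$ fixed while $\varepsilon\downarrow 0$, respectively while $g_n\to g$. The substantive work — that a coexhauster yields an exhaustive family of convex approximations — has already been carried out in example~\ref{ExampleCoexhauster} and in the proposition linking $H$-codifferentials with $H$-convex approximations.
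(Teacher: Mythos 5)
Your proof is correct. It differs from the paper's in execution: the paper simply invokes theorem~\ref{ThNessMaxCondExFamUCA} (the family $\{\varphi_C\}$, $\varphi_C(\cdot)=\max_{(a,p)\in C}(a+p(\cdot))$, being an exhaustive family of weak upper $H$-convex approximations) to get $\varphi_C'(0,g)\le\varepsilon$ for some $C$, and then translates this into the stated form via the formula $\varphi_C'(0,g)=\max\{p(g)\mid (0,p)\in C\}$ for the directional derivative of a maximum of affine functions. You instead unpack the argument: you work directly with the values $\varphi_C(\alpha g)$ at a single well-chosen small $\alpha$, using the coexhauster expansion and local maximality to force $\varphi_C(\alpha g)<\varepsilon\alpha$ for some $C$, and then read off $p(g)<\varepsilon$ from the affine structure of the elements of $C$ with $a=0$. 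This buys you two things: you never need the subdifferential/directional-derivative calculus for the max function (nor, strictly speaking, the convexity of $\alpha\mapsto\varphi_C(\alpha g)$ that the theorem's proof relies on), and your treatment of the finite case --- pigeonhole in $\varepsilon=1/k$ to pass from $\le\varepsilon$ to $\le 0$, then pigeonhole again along $g_n\to g$ together with norm continuity of each $p\in X^*$ to pass from $\gamma(x^*,A)$ to $\Gamma(x^*,A)$ --- replaces the theorem's hypothesis that $\varphi_C'(0,\cdot)$ be continuous on $\Gamma(x^*,A)$ by an explicit and self-contained limiting argument. The price is length; the paper's proof is two lines given the theorem. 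Both arguments are sound, and the technical points you flag (admissibility of $\alpha g$ for small $\alpha$, the Dini-type remainder) are handled adequately.
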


\begin{proof}
By virtue of theorem \ref{ThNessMaxCondExFamUCA} one has that for any $\varepsilon > 0$ and $g \in \gamma(x^*, A)$ there
exists $C \in \overline{E}(x^*)$ such that $h'(0, g) \le \varepsilon$, where 
$h(\cdot) = \max_{(a, p) \in C}(a + p(\cdot))$. It remain to note that 
$h'(0, g) = \max_{p \in \underline{\partial}h(0)} p(g)$, where 
$\underline{\partial}h(0) = \{ p \in X^* \mid [0, p] \in C \}$ (see, e.g., \cite{IoffeTihomirov}, chapter 4).
\end{proof}
\end{example}

\begin{remark}
One could also consider the case when the set $H$ consists of all proper u.s.c. concave functions 
$h \colon X \to \overline{\mathbb{R}}$ such that $0 \in \interior \dom h$.
\end{remark}

\section*{Acknowledgements}

The author is grateful to professor V.F. Demyanov for his support and help with getting acquainted with the
ideas of abstract convex analysis.

\bibliographystyle{abbrv}  
\bibliography{AbstrConvexApprox_bibl}

\end{document}